\documentclass[twoside,11pt]{article}

\usepackage{blindtext}

\usepackage{jmlr2e}
\usepackage{color,xcolor}
\usepackage{amsfonts}
\usepackage{amsmath}
\usepackage{amssymb}
\usepackage{dsfont}
\usepackage{mathrsfs}
\usepackage{ulem}

\newcommand{\Hk}{\mathcal{H}}
\newcommand{\HSk}{HS(\mathcal{H})}

\newcommand{\nb}{n}
\newcommand{\ind}[1]{\mathds{1}_{#1}}%Indicatrice
\newcommand{\N}{\mathbb{N}}%Entiers naturels
\newcommand{\R}{\mathbb{R}}%Réels
\newcommand{\PP}{\mathbb{P}}%Probabilité
\newcommand{\PPO}{\mathbb{P} }
\newcommand{\EE}{\mathbb{E}}%Espérance
\definecolor{vert_question}{rgb}{0.0, 0.5, 0.0}

\newcommand{\SW}{\Sigma}
\newcommand{\ST}{\Sigma_{T}}
\newcommand{\hS}{\widehat{\Sigma}}
\newcommand{\hST}{\widehat{\Sigma}_{T}}

\newcommand{\A}[1]{$\mathtt{A_#1}$}

\newcommand{\ti}[1]{\textit{#1}} %pour mettre en italique

\newcommand{\trueq}[1]{q_{1-\alpha}(#1) }
\newcommand{\qmaj}{\mathcal{Q}}
\newcommand{\qmajbis}{\mathscr{Q}}
\newcommand{\qmajfin}{Q}
\newcommand{\hatqmajfin}{\widehat{Q}} 
\renewcommand{\P}{\mathbb{P}}
\DeclareMathOperator{\sign}{sign}

\usepackage{lastpage}
\ShortHeadings{Spectrally truncated normalized MMD}{Lacroix, Michel, Picard and Rivoirard}
\firstpageno{1}

\begin{document}

\title{Non-asymptotic two-sample kernel testing with the spectrally truncated normalized MMD}

\author{\name Perrine Lacroix \email perrine.lacroix@univ-nantes.fr \\
       \addr Nantes Université, CNRS, Laboratoire de Mathématiques Jean Leray, LMJL, \\
        UMR 6629, F-44000 Nantes, France \\
       \addr Laboratoire de Biologie et Modélisation de la Cellule \\
       Ecole Normale Supérieure de Lyon, CNRS, UMR5239, Université Claude Bernard Lyon 1, \\
       Lyon, France 
       \AND
       \name Bertrand Michel \email bertrand.michel@ec-nantes.fr \\
       \addr  Nantes Université, Ecole Centrale Nantes, CNRS, Laboratoire de Mathématiques Jean Leray, LMJL, \\
	UMR 6629, F-44000 Nantes, France
       \AND
       \name Franck Picard \email franck.picard@ens-lyon.fr \\
       \addr Laboratoire de Biologie et Modélisation de la Cellule \\
       Ecole Normale Supérieure de Lyon, CNRS, UMR5239, Université Claude Bernard Lyon 1, \\
       Lyon, France
       \AND
       \name Vincent Rivoirard  \email Vincent.Rivoirard@dauphine.fr \\
       \addr CEREMADE, CNRS, Universit\'e Paris-Dauphine,\\
        Universit\'e PSL, 75016 Paris, FRANCE\\
       \addr Universit\'e Paris-Saclay, CNRS, Inria, LMO\\ 
        91405, Orsay, FRANCE
       }
       
\editor{My editor}

\maketitle

\begin{abstract}%   <- trailing '%' for backward compatibility of .sty file
Kernel methods provide a flexible and powerful framework for nonparametric statistical testing by embedding probability distributions into a reproducing kernel Hilbert space (RKHS). In this work, we study the kernel two-sample testing problem and focus on a normalized version of the Maximum Mean Discrepancy (MMD) as a test statistic, which scales the discrepancy by the within-group covariance operator to account for data variability. This normalization has been shown to improve test power in both theoretical and empirical settings. Because this normalization requires regularization, we study the non-asymptotic properties of the spectrally truncated normalized MMD (st-nMMD) and derive an exponential upper bound under the null hypothesis. Thanks to this result we propose a sharp and explicit upper bound for the corresponding non-asymptotic quantile, along with a data-adaptive estimator. We further propose an algorithm to tune the hyperparameters involved in the quantile estimation, including the truncation level, without requiring data splitting. We demonstrate the performance of the st-nMMD through numerical experiments under both the null and alternative hypotheses.
\end{abstract}

\begin{keywords}
  two-sample test, kernel method, maximum mean discrepancy, spectral truncation regularization, non-asymptotic calibration, data-adaptive quantile estimation. 
\end{keywords}

%%%%%%%%%%%%%%%%%%%%%%%%
%%%%%%%%%%%%%%%%%%%%%%%%
\section{Introduction}\label{sec:intro}
The increasing availability of high-throughput technologies has led to the widespread generation of complex, high-dimensional data, intensifying the need for flexible non-parametric methods with strong statistical guarantees. In this context, kernel-based hypothesis testing has emerged as a powerful framework, combining nonlinear distributional embeddings with linear methods and principled inference procedures \citep{muandet2017kernel}. These methods have shown strong theoretical and empirical performance in classical problems such as two-sample and independence testing, and have found successful applications across diverse scientific domains \citep{fromont2012kernels,zhang2011kernel, ozier2024kernel}.

In this article, we study kernel-based two-sample testing for equality of distributions, where independent samples are available from each of the two unknown distributions. A seminal contribution is due to \cite{gretton2006kernel,gretton2012kernel}, who proposed the Maximum Mean Discrepancy (MMD) as a test statistic. The approach embeds probability measures into a Reproducing Kernel Hilbert Space (RKHS) via their mean embeddings, and tests the null hypothesis by evaluating the norm of the difference between these embeddings. The main statistical challenge lies in controlling the fluctuations of the test statistic under the null hypothesis, which requires characterizing its null distribution or, at least, accurately estimating its quantiles. Early approaches relied on asymptotic approximations, under which the MMD statistic converges to an infinite weighted sum of chi-squared random variables. Although the resulting test is consistent, the associated asymptotic distribution involves weights that depend on unknown population quantities, directly related to the variability of the embedded observations, rendering this approach impractical for direct use. Consequently, data-splitting procedures are commonly used in practice to estimate the quantiles of the MMD test statistic \citep{fromont2012kernels,Chwialkowski2014wild,schrab2023mmd}.

Beyond statistical calibration issues, recent theoretical work has demonstrated that relying solely on mean embeddings is insufficient to capture all information relevant for two-sample testing, resulting in minimax suboptimality of MMD-based tests \citep{hagrass2024spectral,li2024optimality,schrab2023mmd}. Revisiting and bringing up to date early developments in kernel-based testing \citep{eric2007testing}, recent work has introduced normalized variants of the MMD that account for the residual variability of the embedded data directly in the definition of the test statistic. It corresponds to the functional and kernelized version of the $T^2$-Hotelling test~\citep{hotelling1931generalization}, which is itself the multivariate version of the Student test. Explicitly accounting for the variability of the embedded data within the test statistic facilitates the analysis of its null distribution and has been shown to yield powerful testing procedures. Moreover, from a methodological perspective, this strategy aligns with the kernelized version of Hotelling’s $T^2$ statistic \citep{lehmann1986testing}, with the resulting test statistic corresponding to a kernelized Mahalanobis distance, as used in non-linear discriminant analysis.

Normalizing the MMD requires the inversion of the within-group covariance operator in the RKHS, which is inherently singular and therefore necessitates regularization. Current strategies are based on the ridge-regularized inverse, leading to tests for which the asymptotic null distribution, consistency, and empirical power against fixed alternatives were early established \citep{eric2007testing}. In particular, they get empirically a more powerful test with respect to the MMD. 
Recent non-asymptotic developments have shown that combining mean embeddings with the empirical within-group covariance operator is sufficient to capture the information needed to discriminate the null from the alternative hypothesis, yielding non-asymptotic guarantees and minimax optimality over a class of alternatives \citep{hagrass2024spectral}. However, the null distribution of the ridge-regularized, normalized MMD statistic still involves a complex form, characterized by infinite weighted sums with unknown weights. As a result, data-splitting procedures are again required in practice, leading to an even higher computational cost than that of the standard MMD, due to the need for  spectral decomposition of the within-group covariance operator \citep{hagrass2024spectral}. 

However, an alternative regularization strategy based on spectral truncation is possible. Although previously discussed, it has not been fully explored in existing work \citep{eric2007testing,hagrass2024spectral}, despite several appealing advantages. From a practical perspective, the eigenvectors of the  within-group covariance operator define a basis that naturally induces discriminant directions, enabling a tight integration of statistical testing with rich nonlinear data representations \citep{ozier2024kernel}. These representations capture and visualize what potentially differentiates the two distributions. To give an example in single-cell data analysis, \cite{ozier2024kernel} reveal some cell-types with specific roles during a reversion process in a differentiation mechanism. Such a representational viewpoint is largely absent from existing kernel testing frameworks, which typically provide only binary decisions and fail to exploit the full representation potential of kernel methods. From a theoretical standpoint, spectral truncation also offers notable advantages, as the asymptotic null distribution of the test statistic reduces to a chi-squared variable with parameter equal to the number of retained principal directions and then does not depend on unknown hyperparemeters. This approach is therefore particularly appealing due to its simplicity and computational efficiency. However, as we will show, this approach does not yield a properly calibrated test in the non-asymptotic regime. 

In this work, we propose a non-asymptotic kernel-based testing procedure that is based on a truncated, spectrally regularized normalized MMD. For convenience, we refer to our procedure as \textit{st-nMMD} (spectrally truncated normalized MMD). Our motivation is double: to propose a test that is properly calibrated from the theoretical point of view, along with data-adaptive quantile that performs well in practice without data-splitting \citep{hagrass2024spectral}.

%%%%%%%%%%%%%
\subsection{Detailed contributions}
Our first contribution is to derive a quantile of the st-nMMD statistic under the null hypothesis for the kernel-based two-sample testing of equality of distributions. More specifically, under mild assumptions ensuring that the eigenvalues and spectral gaps are bounded away from zero, we derive an explicit and sharp expression for the non-asymptotic quantile, which allows us to obtain a calibrated test. We further propose a simplified version of this quantile, at the cost of additional assumptions, to obtain a more computationally tractable and practically usable procedure. These results rely on precise non-asymptotic concentration inequalities, in particular for auto-renormalized processes \citep{bertail2008exponential}. Then, in the asymptotic regime, we establish the optimality of our procedure and identify the dominant terms of the quantile. In particular, we highlight the central role played by the spectral elements of the empirical within-group covariance operator. We further elucidate the connection between our non-asymptotic quantile and its asymptotic counterpart. Finally, we propose an estimator of the non-asymptotic quantile based on empirical estimates of the eigenelements of the within-group covariance operator, together with a fully data-driven procedure for tuning the hyperparameters involved in its definition. In particular, we introduce a method for selecting the number of spectral components that retain sufficient information for reliable testing. The quantile we propose is then data-adaptive, in contrast to many tests based on deterministic quantiles. We demonstrate the performance of our method on simulated data in various scenarios. In particular, our procedure is always calibrated regardless the distributions, sizes, or dimensions of the two samples. Moreover, even if the proposed test is slightly conservative as compared with the chi-squared asymptotic quantile (that does not necessarily yields calibrated tests for small and moderate finite sample sizes), its empirical power remains competitive. This nice property of our method is due to the data-adaptive nature of the proposed quantile, that achieves high performance under both the null and the alternative hypotheses.

%%%%%%%%%%%%%%%%
\subsection{Outline} 
In Section~\ref{sec:regul_norm_MMD} we define the st-nMMD statistic. The study of its quantiles is conducted in Section~\ref{sec:theory}. After proposing simplified versions of these quantiles, we design in Section~\ref{sec:datadriven} an algorithm convenient for two-sample test in a non-asymptotic setting. This algorithm is studied in Section~\ref{sec:simu} on synthetic datasets and on the MNIST dataset. Proofs of all theoretical results are gathered in Appendix, as well as supplementary figures.

\paragraph{Notation}
We denote by $\| \cdot \|_{\Hk}$ the norm associated to $\Hk$. We also denote by ${\mathbb N}$ the set of nonnegative integers and ${\mathbb N}^*={\mathbb N}\setminus \{0\}$.
We denote by $\trueq{S}$  the  quantile of a statistic $S$ at the level $(1-\alpha) \in (0,1)$. Finally, the notation $C_{1:T}$ refers to $\left(C_t\right)_{t \in \{1,\ldots,T\}}$.
%%%%%%%%%%%%%%%%%%%%%%%
%%%%%%%%%%%%%%%%%%%%%%%
\section{The spectrally truncated normalized MMD}\label{sec:regul_norm_MMD}
%%%%%%%%%%%%%%%%%%%%%%%
\subsection{Kernel embedding of distributions}
Let $\left(\mathcal{Z},\|\cdot\|_{\mathcal{Z}}\right)$ be a separable metric space of (possibly large) dimension $d$ and let $X$ and $Y$ be independent random variables taking values in $\mathcal{Z}$, with respective distributions $\mathbb{P}_X$ and $\mathbb{P}_Y$. We consider the two-sample testing problem with null hypothesis $H_0$ and alternative hypothesis $H_1$, defined as
\begin{equation}
H_0 : {\mathbb{P}_X = \mathbb{P}_Y}
\quad \text{and} \quad
H_1 : {\mathbb{P}_X \neq \mathbb{P}_Y}.
\label{non_parametric_test}
\end{equation}
We consider a reproducing kernel $k(\cdot,\cdot)$ with associated reproducing kernel Hilbert space (RKHS) $\Hk$. For an introduction to kernel embeddings, we refer the reader to \cite{muandet2017kernel}. 
We assume that $k$ is characteristic so that the testing problem can then be restated in terms of kernel mean embeddings (Riesz representation theorem) as
\begin{equation}
H_0 : {\mu_X = \mu_Y}
\quad \text{and} \quad
H_1 : {\mu_X \neq \mu_Y}.
\label{kernel_test}
\end{equation}
We also assume that $k$ is continuous and that the mapping $z \in \mathcal{Z} \mapsto k(z,z) \in \mathbb{R}$ is integrable with respect to both $\mathbb{P}_X$ and $\mathbb{P}_Y$. The mean embeddings and the covariance operators of $\mathbb{P}_X$ and $\mathbb{P}_Y$ are then well-defined and are expressed as follows:
\begin{align*}
	\mu_X = \EE_{\PP_X}[k(X,\cdot)] \quad &; \quad \mu_Y = \EE_{\PP_Y}[k(Y,\cdot)]. \\
    \Sigma_{X} = \int_{x \in \mathcal{Z}} \Big( k(x,\cdot) - \mu_X \Big)^{\otimes^2_{\Hk}}  d\P_X(x) \quad &; \quad     \Sigma_{Y} = \int_{y \in \mathcal{Z}} \Big( k(y,\cdot) - \mu_Y \Big)^{\otimes^2_{\Hk}} d\P_Y(y),
\end{align*}
where $\otimes_{\Hk}$ denotes the usual tensor operator (see Appendix \ref{Appendice_Operators} and \cite{muandet2017kernel}). We define the homogeneous within-group covariance operator as 
\begin{equation}
\Sigma = (1-p)\Sigma_X + p\Sigma_Y,
\label{within_covariance_operator}
\end{equation}
where $p \in (0,1)$ is a weighting coefficient used to balance the two populations. In the sequel, we focus on the homoscedastic setting in which $\Sigma = \Sigma_X = \Sigma_Y$.

Since $\Hk$ is a separable Hilbert space (as $\mathcal{Z}$ is separable and $k(\cdot,\cdot)$ is continuous), the operator $\Sigma$ is self-adjoint, nonnegative, and trace class. Hence we introduce its eigenelements $(\lambda_t, f_t)_{t \in \mathbb{N}^*}$, where $(f_t)_{t \in \mathbb{N}^*}$ forms an orthonormal basis of $\Hk$, and $(\lambda_t)_{t \in \mathbb{N}^*}$ is a non-increasing sequence of nonnegative real numbers converging to $0$ (possibly vanishing beyond a finite index). The operator $\Sigma$ admits the spectral decomposition 
$$\Sigma = \sum_{t=1}^{\infty} \lambda_t f_t \otimes_{\Hk} f_t.$$ 
For a fixed $T \in \mathbb{N}^*$, we define the truncated spectral decomposition of $\Sigma$ by
$$\Sigma_{T} = \sum_{t=1}^{T} \lambda_t f_t \otimes_{\Hk} f_t.$$

%%%%%%%%%%%%%%%%%%%%%%%
\subsection{Estimation of Distribution Embeddings}
Suppose we observe $n_X$ independent observations $(X_1,\ldots,X_{n_X})$ from $\mathbb{P}_X$ and $n_Y$ independent observations $(Y_1,\ldots,Y_{n_Y})$ from $\mathbb{P}_Y$, where each observation is described by $d$ dependent variables, with $d$ large with respect to $n_X$ and $n_Y$. 
The empirical counterpart of previously defined quantities is obtained using  empirical estimates defined as
\begin{equation*}
    \widehat{\mu}_X = \frac{1}{n_X} \underset{i=1}{\overset{n_X}{\sum}} k(X_i,\cdot) \quad ; \quad \widehat{\mu}_Y = \frac{1}{n_Y} \underset{j=1}{\overset{n_Y}{\sum}} k(Y_j,\cdot),
\end{equation*}
and 
\begin{equation}
     \widehat{\Sigma} = \frac{n_X}{n_X+n_Y} \widehat{\Sigma}_{X} + \frac{n_Y}{n_X+n_Y} \widehat{\Sigma}_{Y},
    \label{emprical_SW}
\end{equation}
with 
$$  \widehat{\Sigma}_{X} = \frac{1}{n_X} \underset{i=1}{\overset{n_X}{\sum}} \Big(k(X_i,\cdot)-\widehat{\mu}_X\Big)^{\otimes^2_{\Hk}} \quad ; \quad \widehat{\Sigma}_{Y} = \frac{1}{n_Y} \underset{j=1}{\overset{n_Y}{\sum}} \Big(k(Y_j,\cdot)-\widehat{\mu}_Y\Big)^{\otimes^2_{\Hk}}.$$

We define the spectral decomposition and, for a fixed $T \in \mathbb{N^*}$,  the truncated spectral decomposition of $\widehat{\Sigma} $ by
\begin{equation*}
    \hS = \underset{t=1}{\overset{+ \infty}{\sum}} \widehat{\lambda}_t \widehat{f}_t \otimes_{\Hk} \widehat{f}_t \quad ; \quad \hST = \underset{t=1}{\overset{T}{\sum}} \widehat{\lambda}_t \widehat{f}_t \otimes_{\Hk} \widehat{f}_t,
\end{equation*}
where  $(\widehat{f}_t)_{t \in \N^{*}}$ is an orthonormal basis and $(\widehat{\lambda_t})_{t \in \N^{*}}$ is a decreasing sequence of real numbers with $\widehat{\lambda_t}=0$ for $t\geq t^*$ for some $t^* \leq n_X + n_Y$. For any $r \in \R^{*}$, we  set
$$ \hS^{r} = \underset{t=1}{\overset{+ \infty}{\sum}} \widehat{\lambda}_t^{r} \widehat{f}_t \otimes_{\Hk} \widehat{f}_t \quad ; \quad \hST^{r} = \underset{t=1}{\overset{T}{\sum}} \widehat{\lambda}_t^{r} \widehat{f}_t \otimes_{\Hk} \widehat{f}_t,$$
with the convention $\widehat{\lambda}_t^{r}=0$ if $\widehat{\lambda}_t=0$. Observe that the case $r=-1$ corresponds to taking the pseudo-inverse of $\hS$ and $\hST$. Finally we consider the following test statistics, hereafter referred to as the st-nMMD statistic:
\begin{equation}
    \widehat{D}^2_T = \frac{n_X n_Y}{n_X + n_Y}  \left \| \hST^{-\frac{1}{2}} \left(\widehat{\mu}_X - \widehat{\mu}_Y \right)   \right \|_{\Hk}^2 = \frac{n_X n_Y}{n_X + n_Y} \underset{t=1}{\overset{T}{\sum}} \frac{\langle \widehat{f}_t, \widehat{\mu}_X - \widehat{\mu}_Y \rangle_{\Hk}^2}{\widehat{\lambda}_t},
    \label{Hotelling_test_stat}
\end{equation}
where $\langle \cdot , \cdot  \rangle_{\Hk}$ stands for the RKHS scalar product 
and $T$ is chosen so that all eigenvalues $\big(\widehat{\lambda}_t\big)_{t=1,\ldots,T}$ are positive.

The st-nMMD statistic is a finite sum of ratios between the projections of the difference in mean embeddings onto the directions governed by the variability of the data, and the eigenvalues of the empirical within-group covariance operator, which control the fluctuations of the numerator. This formulation allows us to exploit concentration inequalities for self-normalized processes \citep{bertail2008exponential}. Finally, we emphasize that the st-nMMD statistic depends on $T$, the number of spectral components considered.

Early procedures based on the st-nMMD relied on the asymptotic $\chi^2$ distribution under the null hypothesis. In particular, \cite{eric2007testing} showed that $$\widehat{D}^2_T \underset{n_Y \rightarrow + \infty}{\underset{n_X \rightarrow + \infty}{\overset{\mathcal{L}}{\longrightarrow}}}\chi^2(T),$$ where $\overset{\mathcal{L}}{\rightarrow}$ stands for the convergence in distribution. As expected, this approximation appears to be poorly calibrated for moderate sample sizes and for large values of the truncation parameter. As shown in Section~\ref{sec:simu}, which presents the numerical results, this miscalibration is already apparent for $n_X = n_Y = 50$ (Figure~\ref{Fig:level_simulations_T_005}). This highlights the practical need for a new non-asymptotic test that does not rely on permutation methods, in order to avoid heavy computational costs.
%%%%%%%%%%%%%%%%%%%%%%%
%%%%%%%%%%%%%%%%%%%%%%%
\section{Non-asymptotic exponential upper-bound for the st-nMMD}
\label{sec:theory}
To establish a non-asymptotic control of the st-nMMD test we study the random fluctuations of the statistic $\widehat{D}^2_T$ under the null hypothesis. Since this statistic corresponds to a renormalized squared distance, the testing procedure naturally rejects the null hypothesis $H_0$ for large values of $\widehat{D}^2_T$. Consequently, the rejection area is of the form $[x, +\infty)$, for some threshold $x > 0$ to be determined.  Let $\trueq{\widehat{D}^2_T}$ denote the exact $(1-\alpha)$-quantile of the null distribution of $\widehat{D}^2_T$, as defined in~(\ref{Hotelling_test_stat}), for any $\alpha \in (0,1)$. In the following, we derive  explicit upper bound on $\trueq{\widehat{D}^2_T}$. 
This result yields a properly calibrated test, ensuring control of the type-I error. 	

In the sequel, we assume that $H_0$ holds, so that $\PP_X = \PP_Y$, that we denote simply by $\P$, and the mean embeddings of the two populations coincide and are equal to $\mu = \EE_{\PP}[k(Z,\cdot)]$. The within-group covariance operator is then given by
\[\Sigma = \int_{z \in \mathcal{Z}} \Big( k(z,\cdot) - \mu \Big)^{\otimes^2_{\Hk}} d\PP(z).\]
	We then define the left–right spectral gaps of $\Sigma$ as
	\begin{align*}
		\Delta_1 &= \frac{1}{2} \left( \lambda_1 - \lambda_{2} \right), \\
		\Delta_t &= \frac{1}{2} \min \left( \lambda_t - \lambda_{t+1} , \lambda_{t-1} - \lambda_t \right), \quad \text{for } t \in \{2, \cdots, T-1\}, \\
		\Delta_T &= \frac{1}{2} \min \left( \lambda_T , \lambda_{T-1} - \lambda_T \right).
	\end{align*}
	Following assumptions will be considered in the sequel:
	\begin{itemize}
		\item[\A{1}] the two populations are well balanced: $n_X = n_Y$, which we denote by $\nb$ for simplicity.
		\item[\A{2}] the kernel is bounded: $M_k < +\infty$, with $M_k = \underset{z \in \mathcal{Z}}{\sup} \, k(z, z)$.
		\item[\A{3}] the eigenvalues of $\Sigma$ are all simple.
	\end{itemize}
Assumption~\A{1} is mainly technical, as our strategy (see Theorem~\ref{Hoeffding_inequality} in Section~\ref{existing_results}) relies on Hoeffding's inequality for random variables with symmetric distributions \citep{bertail2008exponential}, which requires balanced sample sizes. Assumption~\A{2} is very mild and natural in the context of kernel methods. In particular, it is satisfied by commonly used kernels such as the Gaussian and Laplacian kernels.  Assumption~\A{3}, while more restrictive, is essential:  our proof relies on the direction-wise decomposition of $\widehat{D}^2_T$ given in~(\ref{Hotelling_test_stat}), which requires strictly positive spectral gaps around each eigenvalue \citep{blanchard2007statistical,zwald2005convergence,ozier2024extending}.

%%%%%%%%%%%%
\subsection{Concentration inequalities on the st-nMMD statistic}
\label{sec:main} 
To derive a non-asymptotic quantile for the distribution of the st-nMMD statistic $\widehat{D}_T^2$, we first establish a non-asymptotic concentration inequality for this statistic. Our main result is stated under Assumptions~\A{1}$\sim$\A{3}.
 Observe that in this case, $  \widehat{D}^2_T$ writes
$$
    \widehat{D}^2_T = \frac{n}{2} \underset{t=1}{\overset{T}{\sum}} \frac{\langle \widehat{f}_t, \widehat{\mu}_X - \widehat{\mu}_Y \rangle_{\Hk}^2}{\widehat{\lambda}_t}.
$$
\begin{theorem}
    For all $\delta>0$, let us define $\qmaj\left(\nb,\delta,M_k,\lambda_{1:T},\Delta_{1:T}\right)$ by
    \begin{align}
        \qmaj \bigg(\nb,&\delta,M_k,\lambda_{1:T},\Delta_{1:T}\bigg) \notag \\
        &\hspace{-0.7cm}:= 2 T \underset{t=1,\ldots,T}{\max}  \left(\frac{\lambda_t - 4 M_k \sqrt{\frac{\delta}{\nb}}}{\lambda_t - K_{1,t}\left(\nb,M_k,\delta,\Delta_t \right) }  \right) \left(\sqrt{\delta} + \frac{K_2 \left( \nb,M_k,\delta \right)}{\underset{t=\{1,\ldots,T\}}{\min} \left\{\Delta_t \sqrt{\lambda_t - 4 M_k \sqrt{\frac{\delta}{\nb}}} \right\}} \right)^2,
        \label{x_bound}
    \end{align}
    where for all $t \in \{1,\ldots,T\}$ 
    \begin{align*}
        K_{1,t}\left(\nb,M_k,\delta,\Delta_t \right) 
         &= 4 M_k \left(1+ 2 \frac{\nb-1}{\nb} \right) \sqrt{\frac{\delta}{\nb}} \ \notag \\
        & \hspace{-0.5 cm} + \frac{24 M_k^2}{\Delta_t} \left(\frac{\sqrt{2} \left(1 +\sqrt{2\delta} \right)}{\nb } +  \frac{4}{\sqrt{\nb}} \left(\frac{\nb-1}{\nb}\right)^2 \right) \left( 1+ \sqrt{\frac{\delta}{2}} \right)  + \frac{2 M_k}{\nb} \left(2 + \sqrt{\delta} \right)^2     \end{align*}
        and
         \begin{align*}
         K_2 \left( \nb,M_k,\delta \right)  &= \frac{12 M_k^{\frac{3}{2}}}{\sqrt{2 \nb}} \left(1 + \sqrt{2\delta}\right) \left(1 + \sqrt{\frac{\delta}{2}} \right).
    \end{align*}
    Assume that \A{1}$\sim$\A{3} are satisfied and let us suppose that 
     \begin{equation}\label{A4bis}
 \frac{12 M_k}{\sqrt{n}} \left( 1 + \sqrt{\frac{\delta}{2}}\right)  < \underset{t \in \{1,\ldots,T\}}{\min} \Delta_t \tag{SP1}
 \end{equation}
 and
    \begin{equation}
        \lambda_t > K_{1,t}\left(\nb,M_k,\delta,\Delta_t \right), \quad t \in \{1,\ldots,T\}. \tag{SP2}
        \label{lambda_t_condition}
    \end{equation}
Then, we have:
    \begin{equation*}
    \PPO \left(\widehat{D}^2_T > \qmaj\left(\nb,\delta,M_k,\lambda_{1:T},\Delta_{1:T}\right) \right) \leq  9 T e^{-\delta}.
    \end{equation*}
\label{the_theorem}
\end{theorem}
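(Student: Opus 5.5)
We argue direction by direction, using the decomposition $\widehat{D}^2_T = \frac{n}{2}\sum_{t=1}^T \langle \widehat{f}_t, \widehat{\mu}_X-\widehat{\mu}_Y\rangle_{\Hk}^2/\widehat{\lambda}_t$ together with a union bound over $t\in\{1,\ldots,T\}$ and over a fixed number of events per direction. The count $9Te^{-\delta}$ suggests a budget of about nine exponential events per direction. Write $\Delta\widehat{\mu}=\widehat{\mu}_X-\widehat{\mu}_Y$. For each $t$ we split
\[
\langle \widehat{f}_t,\Delta\widehat{\mu}\rangle_{\Hk} = \langle f_t,\Delta\widehat{\mu}\rangle_{\Hk} + \langle \widehat{f}_t - f_t,\Delta\widehat{\mu}\rangle_{\Hk},
\]
after aligning the sign of $\widehat{f}_t$ with $f_t$; this is harmless because the statistic is invariant under $\widehat{f}_t\mapsto-\widehat{f}_t$. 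The ratio $\langle\widehat{f}_t,\Delta\widehat\mu\rangle^2/\widehat{\lambda}_t$ is then written as
\[
\frac{\lambda_t - 4M_k\sqrt{\delta/n}}{\widehat{\lambda}_t}\cdot\frac{\langle\widehat{f}_t,\Delta\widehat\mu\rangle^2}{\lambda_t-4M_k\sqrt{\delta/n}} .
\]
The first factor is bounded by the max-ratio in (\ref{x_bound}) on an event where $\widehat{\lambda}_t\ge \lambda_t-K_{1,t}$, which is positive by (\ref{lambda_t_condition}). The second factor is bounded by $2(\sqrt{\delta}+\cdots)^2/n$, up to the $n/2$ normalisation. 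Summing over $t$ produces the factor $T$ and the global max/min in the statement.

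\emph{Main term.} Under $H_0$ with $n_X=n_Y=n$, the variable $\langle f_t,\Delta\widehat\mu\rangle=\frac1n\sum_i(f_t(X_i)-f_t(Y_i))$ is a sum of symmetric i.i.d. terms. We apply the self-normalised Hoeffding inequality for symmetric variables (Theorem~\ref{Hoeffding_inequality}, Bertail et al.). It gives, with probability at least $1-e^{-\delta}$, $\bigl(\sum_i\xi_i\bigr)^2\le 2\delta\sum_i\xi_i^2$, where $\xi_i=f_t(X_i)-f_t(Y_i)$. The empirical variance $\frac1n\sum_i\xi_i^2$ has mean $2\lambda_t$ and can be compared with $\widehat{\lambda}_t$, or with $\lambda_t$ up to a deviation of order $M_k\sqrt{\delta/n}$, by Bernstein/Hoeffding for bounded variables ($|f_t(z)|\le\sqrt{M_k}$). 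This is where the term $4M_k\sqrt{\delta/n}$ appears. Note that $\lambda_t-4M_k\sqrt{\delta/n}$ is the quantity used as a lower bound for the empirical second moment.

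\emph{Perturbation terms.}
\begin{itemize}
\item[(i)] Concentration of $\widehat{\Sigma}$ in Hilbert--Schmidt norm: Pinelis/McDiarmid for bounded Hilbert-valued variables gives $\|\widehat\Sigma-\Sigma\|_{HS}\lesssim \frac{M_k}{\sqrt n}(1+\sqrt{\delta/2})$, plus bias terms of order $M_k/n$ coming from centring with $\widehat\mu_X$ and $\widehat\mu_Y$. These produce the $(n-1)/n$ factors and the $(2+\sqrt\delta)^2/n$ term.
\item[(ii)] Eigenvector perturbation \`a la Zwald--Blanchard: under (\ref{A4bis}), i.e. $\|\widehat\Sigma-\Sigma\|<\Delta_t$ (this is exactly why $12M_k(1+\sqrt{\delta/2})/\sqrt n<\min\Delta_t$ is assumed), we get $\|\widehat f_t-f_t\|\le 2\|\widehat\Sigma-\Sigma\|/\Delta_t$, with \A{3} ensuring simple eigenvalues.
\item[(iii)] Eigenvalue control: $|\widehat\lambda_t-\lambda_t|\le\|\widehat\Sigma-\Sigma\|$, refined using the projection term $\langle (\widehat\Sigma-\Sigma)f_t,f_t\rangle$ (of order $M_k\sqrt{\delta/n}$) plus a second-order term $\|\widehat\Sigma-\Sigma\|^2/\Delta_t$. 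This yields $K_{1,t}$ with its $24M_k^2/\Delta_t$ part.
\item[(iv)] Concentration of $\|\Delta\widehat\mu\|_{\Hk}\le \frac{\sqrt{2M_k}}{\sqrt n}(1+\sqrt{2\delta})$ (vector Hoeffding/Pinelis). Combined with (ii), this bounds the cross term $|\langle\widehat f_t-f_t,\Delta\widehat\mu\rangle|\le K_2/(\sqrt n\,\Delta_t)$, which after division by $\sqrt{\lambda_t-4M_k\sqrt{\delta/n}}$ gives the second summand inside the square.
\end{itemize}
Then $(a+b)^2$ with $a\le\sqrt{2\delta/n}\sqrt{\lambda_t-\cdots}$ and $b$ as above yields the bracket.

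\emph{Expected obstacle.} The delicate step is tying the self-normalised Hoeffding bound, whose normaliser is the random $\sum\xi_i^2$, to the denominator $\widehat\lambda_t$. The normaliser uses $f_t$ and uncentred pair differences, whereas $\widehat\lambda_t$ uses $\widehat f_t$ and the pooled, centred covariance. One must show, on the intersection of the good events, that $\frac1{2n}\sum\xi_i^2\le \lambda_t+$ (terms absorbed in $K_{1,t}$) and $\widehat\lambda_t\ge\lambda_t-K_{1,t}$, without double-counting deviations. The first attempt would be to bound both quantities against $\lambda_t$ separately, using a one-sided Bernstein bound for the former and Weyl plus the second-order perturbation expansion for the latter. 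The remaining work is to track the constants so that at most nine events per direction are needed.
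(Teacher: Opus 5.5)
There is a genuine gap, and it sits exactly where you place the factor $\lambda_t-4M_k\sqrt{\delta/\nb}$. Write $W_i=\phi(X_i)-\phi(Y_i)$, so that $\xi_i=\langle f_t,W_i\rangle_{\Hk}$ and $S_t=\frac1\nb\sum_i\xi_i^2$. In your scheme the self-normalised inequality acts on the numerator only. It gives $\langle f_t,\Delta\widehat{\mu}\rangle_{\Hk}^2\le\frac{2\delta}{\nb}S_t$, with probability $1-2e^{-\delta}$ because the bound is two-sided. For your second factor to satisfy $\frac{\nb}{2}\langle f_t,\Delta\widehat{\mu}\rangle_{\Hk}^2/(\lambda_t-4M_k\sqrt{\delta/\nb})\le 2\delta$, you would therefore need $S_t\le 2\lambda_t-8M_k\sqrt{\delta/\nb}$. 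Since $\EE S_t=2\lambda_t$, this is a lower-deviation event, essentially the complement of the paper's $E_{4,t}$, and McDiarmid bounds its probability by $e^{-\delta}$. The inequality you need thus holds only on a rare event. A lower bound on the normaliser is useless once the normaliser appears in an upper bound for the numerator. What you can prove is $S_t\le 2\lambda_t+8M_k\sqrt{\delta/\nb}$. Combined with your denominator bound $\widehat\lambda_t\ge\lambda_t-K_{1,t}$, this gives the ratio $(\lambda_t+4M_k\sqrt{\delta/\nb})/(\lambda_t-K_{1,t})$, which is strictly larger than the one in $\qmaj$. Your ``expected obstacle'' paragraph proposes exactly this decoupled comparison. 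Once $K_{1,t}$ has been spent in the denominator, nothing is left in it to absorb the $+4M_k\sqrt{\delta/\nb}$.

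The paper never decouples. By the exact identity of Lemma~\ref{lambda_t_chap_decompo},
\[
\widehat\lambda_t=\frac{1}{2\nb}\sum_{i}\langle\widehat f_t,W_i\rangle_{\Hk}^2-\frac12\langle\widehat f_t,\widehat\mu_X-\widehat\mu_Y\rangle_{\Hk}^2+\frac1\nb\sum_i\langle\widehat f_t,\phi(X_i)-\widehat\mu_X\rangle_{\Hk}\langle\widehat f_t,\phi(Y_i)-\widehat\mu_Y\rangle_{\Hk}.
\]
After replacing $\widehat f_t$ by $sf_t$ (on $E_1,E_2,E_{3,t},E_5$), the denominator becomes $\frac12S_t$ minus deterministic corrections. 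The direction-$t$ event is then that $|\frac1\nb\sum_i\langle f_t,W_i\rangle_{\Hk}|/\sqrt{S_t}$ exceeds a threshold increasing in $S_t$. Only at that point is $S_t\ge 2\lambda_t-8M_k\sqrt{\delta/\nb}$ inserted, where it now goes in the right direction, and the Rademacher-conditioned Hoeffding bound costs $2e^{-\delta}$. This is the origin of $\lambda_t-4M_k\sqrt{\delta/\nb}$. Likewise, the $M_k^2/(\Delta_t\sqrt{\nb})$ part of $K_{1,t}$ comes from $16M_k\|\widehat\Sigma-\Sigma\|_{\HSk}/\Delta_t$, the eigenvector error inside this identity (Lemma~\ref{negligeble_terms}), not from a second-order eigenvalue expansion.

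Your decoupled route can still be completed, but through a comparison you do not make. Set $a=4M_k\sqrt{\delta/\nb}$ and $K_W=\frac{6M_k}{\sqrt{\nb}}\bigl(1+\sqrt{\delta/2}\bigr)$, and use $S_t\le 2\lambda_t+2a$ together with the Hoffmann--Wielandt bound $\widehat\lambda_t\ge\lambda_t-K_W$ on $E_2$. With your cross-term bound from $E_2,E_5$, the direction-$t$ term is at most $2\frac{\lambda_t+a}{\lambda_t-K_W}\bigl(\sqrt\delta+\frac{K_2}{\Delta_t\sqrt{\lambda_t+a}}\bigr)^2$. Here the bracket is trivially dominated by the one in $\qmaj$. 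The inequality $\frac{\lambda_t+a}{\lambda_t-K_W}\le\frac{\lambda_t-a}{\lambda_t-K_{1,t}}$ follows from $K_{1,t}\ge 2a+K_W$. That in turn holds because $\Delta_t\le\lambda_t/2\le\mathrm{Tr}(\Sigma)/2\le M_k/2$, and \eqref{A4bis} forces $\nb$ to be large. That argument rests on slack in $K_{1,t}$, not on the mechanism that produced it, and it is not in your proposal.
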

At the price of an additional assumption, the second term on the right-hand side of (\ref{x_bound}) can be upper bounded, yielding a simpler expression for the upper bound, denoted by $\qmajbis\left(\nb,\delta,M_k,\lambda_{1:T},\Delta_{1:T}\right)$.
\begin{corollary}
Assume that \A{1}$\sim$\A{3} hold, together with the spectral conditions (\ref{A4bis}) and  (\ref{lambda_t_condition}) for all $t \in \{1,\ldots,T\}$. Let $\delta>0$. If in addition
\begin{equation}
    \underset{t=\{1,\ldots,T\}}{\min} \left\{\Delta_t \sqrt{\lambda_t - 4 M_k \sqrt{\frac{\delta}{\nb}}} \right\} \geq c \frac{K_2 \left( \nb,M_k,\delta \right)}{\sqrt{\delta}},
    \label{K_3_simplified}  \tag{SP3}
\end{equation}
for $c$ a positive constant, then we obtain a similar result:
    \begin{equation*}
        \PPO \left(\widehat{D}^2_T > \qmajbis \left(\nb,\delta,M_k,\lambda_{1:T},\Delta_{1:T}\right) \right) \leq  9 T e^{-\delta},
    \end{equation*}
    with
    \begin{equation}
        \qmajbis\left(\nb,\delta,M_k,\lambda_{1:T},\Delta_{1:T}\right) := 2\left(1+c^{-1}\right)^2 T \delta \underset{t=1,\ldots,T}{\max}  \left(\frac{\lambda_t - 4 M_k \sqrt{\frac{\delta}{\nb}}}{\lambda_t - K_{1,t}\left(\nb,M_k,\delta,\Delta_t \right) }  \right). 
        \label{x_simplified}
    \end{equation}
\label{bound_simple}
\end{corollary}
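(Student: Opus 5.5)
The corollary follows directly from Theorem~\ref{the_theorem}: I will show that (\ref{K_3_simplified}) gives the deterministic inequality $\qmaj\left(\nb,\delta,M_k,\lambda_{1:T},\Delta_{1:T}\right) \leq \qmajbis\left(\nb,\delta,M_k,\lambda_{1:T},\Delta_{1:T}\right)$. Both quantities are deterministic, so the event $\{\widehat{D}^2_T > \qmajbis\}$ is contained in $\{\widehat{D}^2_T > \qmaj\}$. The hypotheses \A{1}$\sim$\A{3}, (\ref{A4bis}) and (\ref{lambda_t_condition}) are exactly those of Theorem~\ref{the_theorem}, so its bound $9Te^{-\delta}$ then transfers immediately.

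\textbf{Bounding the squared factor.} Set $m := \min_{t} \Delta_t \sqrt{\lambda_t - 4M_k\sqrt{\delta/\nb}}$. This quantity is well defined and positive. Indeed, $K_{1,t} \geq 4M_k\sqrt{\delta/\nb}$, because the first summand of $K_{1,t}$ alone already exceeds this value and the other summands are nonnegative. Hence (\ref{lambda_t_condition}) gives $\lambda_t - 4M_k\sqrt{\delta/\nb} > \lambda_t - K_{1,t} > 0$. Moreover $\Delta_t > 0$ by (\ref{A4bis}). Condition (\ref{K_3_simplified}) reads $m \geq c K_2/\sqrt{\delta}$, which is equivalent to $K_2/m \leq c^{-1}\sqrt{\delta}$. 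Therefore
\begin{equation*}
\left(\sqrt{\delta} + \frac{K_2}{m}\right)^2 \leq \left(\sqrt{\delta} + c^{-1}\sqrt{\delta}\right)^2 = \left(1+c^{-1}\right)^2 \delta .
\end{equation*}

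\textbf{Conclusion.} The remaining factor $2T\max_t\bigl(\lambda_t - 4M_k\sqrt{\delta/\nb}\bigr)/\bigl(\lambda_t - K_{1,t}\bigr)$ is common to (\ref{x_bound}) and (\ref{x_simplified}), and it is positive by the argument above. Multiplying the previous inequality by this positive factor gives $\qmaj \leq \qmajbis$, and monotonicity of probability yields the claimed bound $9Te^{-\delta}$.

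The argument contains no real obstacle. The only care needed is to check that the denominators are positive and that the max factor is nonnegative, so that multiplying the inequality does not reverse it; both facts follow from (\ref{lambda_t_condition}) and (\ref{A4bis}) as shown.
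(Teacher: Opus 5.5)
Your proposal is correct and takes the same route as the paper. The paper's proof simply asserts $\qmaj \leq \qmajbis$ and concludes $\PPO(\widehat{D}^2_T > \qmajbis) \leq \PPO(\widehat{D}^2_T > \qmaj) \leq 9Te^{-\delta}$; you additionally derive that inequality explicitly from (\ref{K_3_simplified}) and check positivity of the common factor, both of which the paper leaves implicit.
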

In the sequel, for notational simplicity, we denote the quantities $\qmaj \left(\nb,\delta,M_k,\lambda_{1:T},\Delta_{1:T}\right) $ and $\qmajbis\left(\nb,\delta,M_k,\lambda_{1:T},\Delta_{1:T}\right)$, introduced in Theorem~\ref{the_theorem} and Corollary~\ref{bound_simple}, simply by $\qmaj$ and $\qmajbis$ respectively.

Theorem~\ref{the_theorem} and Corollary~\ref{bound_simple} provide two exponential deviation inequalities for the statistic $\widehat{D}^2_T$ under the null hypothesis~$H_0$. They yield two upper bounds, $\qmaj$ and $\qmajbis$,  for $\trueq{\widehat{D}^2_T}$, thereby leading to non-asymptotically calibrated tests, as described in the next section. Both quantities $K_{1,t}$ and $K_2$ involved in  $\qmaj$ and $\qmajbis$ are intricate, but they are derived from sharp non-asymptotic concentration inequalities and can therefore be applied for any $\nb$. 

Let us now analyse the upper bounds $\qmaj$ and $\qmajbis$. First, observe that 
$$  
\widehat{D}^2_T = \frac{1}{2} \underset{t=1}{\overset{T}{\sum}} \left(\frac{\langle \widehat{f}_t, \sqrt{n} \left(\widehat{\mu}_X - \widehat{\mu}_Y \right) \rangle_{\Hk}}{\sqrt{\widehat{\lambda}_t}} \right)^2,
$$
and under the null hypothesis $H_0$, the random variable $\left(\widehat{\mu}_X - \widehat{\mu}_Y\right)$ is of the order $1/\sqrt{n}$ \citep{eric2007testing}. 
Hence, we expect that the order of magnitudes in $\qmaj$ and $\qmajbis$ remains asymptotically constant with respect to $n$, which indeed is the case. This arises from the fact that concentration inequalities used to get $\qmaj$ and $\qmajbis$ are sharp \citep{zwald2005convergence,reiss2020nonasymptotic,bertail2008exponential}. 
Lastly, $\qmaj$ and $\qmajbis$ depend on the eigenelements. In particular, the eigenvalues appear in both the numerator and the denominator in a similar way,  ensuring that the ratio remains well-controlled even when the eigenvalues are very small. Additionally, the denominator depends on the spectral gaps, as in \cite{reiss2020nonasymptotic}.

Assumptions~(\ref{A4bis}), (\ref{lambda_t_condition}), and~(\ref{K_3_simplified}) impose lower-bound conditions on the eigenvalues and spectral gaps, requiring them not to be too small. In particular, they imply that $\lambda_{1:T}$ and $\Delta_{1:T}$ should be at least of the order of the parametric rate $1/\sqrt{n}$. This is not surprising, as excessively small values of $\lambda_t$ lead to estimation difficulties, while small values of $\Delta_t$ make it hard to distinguish $\lambda_t$ from $\lambda_{t+1}$ or $\lambda_{t-1}$, and consequently to separate the corresponding eigenfunctions $f_t$ from $f_{t+1}$ or $f_{t-1}$. Since eigenvalues and the total inertia in the embedded space are closely related, these lower bounds imply that the variance of the embedded data along the first $T$ directions of the within-group covariance operator $\Sigma$ in $\Hk$ is not too small.  These conditions implicitly constrain the underlying distributions $\PP_X$ and $\PP_Y$.  Note that for sufficiently large $n$, condition~(\ref{lambda_t_condition}) automatically implies the gap condition~(\ref{A4bis}).

%%%%%%%%%%%%%%%%%
\subsection{Main ideas of the proof.}
The proofs of Theorem~\ref{the_theorem} and Corollary~\ref{bound_simple} are given in Appendix~\ref{la_section_proof}. Here we outline the main ideas of our strategy. The main difficulty in obtaining a non-asymptotic upper bound for the st-nMMD test arises from the use of spectral truncation as a regularization strategy for $\widehat{\Sigma}$. A first natural approach would be to control the fluctuations of $\widehat{D}^2_T$ globally through the expression (\ref{Hotelling_test_stat}) by dealing with the terms $\widehat{\Sigma}_T$ and $\widehat{\mu}_X-\widehat{\mu}_Y$ separately. 
In particular, the difference in mean embeddings can be controlled using results from \cite{gretton2012kernel}. Operator perturbation theory could then be applied to control the renormalizing term $\widehat{\Sigma}_T$ \citep{koltchinskii2000random,blanchard2007statistical,zwald2005convergence}. However, this global approach leads to an upper bound involving the term $\lambda_T^{-1}$ as a multiplicative factor (similarly to Proposition~3 in \cite{ozier2024extending}), which can be prohibitively large.

A second idea, which we adopt, is to control simultaneously the terms $\widehat{\mu}_X-\widehat{\mu}_Y$ and $\widehat{f}_t / \widehat{\lambda_t}$ using a local approach (\textit{e.g.} direction-by-direction). To this end, we  leverage the geometry induced by projecting $\widehat{\mu}_X - \widehat{\mu}_Y$ onto each direction $\widehat{f}_t$. 
The statistic $\widehat{D}^2_T$ can then be expressed as a sum of ratios, naturally suggesting an interpretation as a sum of self-normalized processes, where each denominator corresponds to the standard deviation of its numerator.
To achieve this appropriate normalization, we derive a sequence of sharp concentration inequalities, primarily based on McDiarmid's inequality and tools from operator perturbation theory (see Appendix \ref{existing_results} for details). We then adapt the work of \cite{bertail2008exponential}, who derived sharp Hoeffding-type inequalities for multivariate symmetric self-normalized sums, to the two-sample testing framework in $\Hk$ accounting for both the kernel structure and the specific $\Sigma$-renormalization. 
The symmetrization step is made possible by the balanced-sample Assumption~\A{1}. Following \cite{bertail2008exponential}, we obtain a local direction-wise control by analyzing each term in the sum defining $\widehat{D}_T^2$. As a consequence, the resulting upper bound scales linearly with $T$, while the terms involving $\lambda_t$ appear in both the numerator and the denominator of the quantile. This prevents the bound from exploding when the $\lambda_t$'s take small values. We note that our proof remains true for non-characteristic kernels but the equivalence between tests (\ref{non_parametric_test}) and (\ref{kernel_test}) is no longer satisfied.

%%%%%%%%%%%%%%%%%%%%%%%%%%%%%%%%%%%%%%
\subsection{From concentration inequalities to non-asymptotic calibrated tests}
Theorem \ref{the_theorem} and Corollary \ref{bound_simple} provide two calibrated tests. Let $\alpha \in (0,1)$ be a significant level  and fix $\delta > 0$ such that $\alpha = 9T e^{-\delta}$. Then, the tests
\begin{equation}
		\ind{\big\{\widehat{D}^2_T > \qmaj \big\}}
		\quad \text{and} \quad
		\ind{\big\{\widehat{D}^2_T > \qmajbis \big\}},
		\label{les_deux_tests}
	\end{equation}
have type-I error controlled at level $\alpha$. 
\begin{remark}
Under the alternative hypothesis $H_1$, since $\left(f_t\right)_{t \in \N^{*}}$ is an orthonormal basis of $\Hk$, there exists $s \in \N^{*}$ such that $\langle f_s, \mu_X - \mu_Y \rangle_{\Hk} \neq 0$. So, if $s \leq T$, then 
$$
\widehat{D}^2_T \geq \frac{n}{2} \frac{\langle \widehat{f}_s, \widehat{\mu}_X - \widehat{\mu}_Y \rangle_{\Hk}^2}{\widehat{\lambda}_s} \underset{n \rightarrow + \infty}{\sim} \frac{n}{2} \frac{\langle f_s, \mu_X - \mu_Y \rangle_{\Hk}^2}{\lambda_s}  \underset{n \rightarrow + \infty}{\longrightarrow} + \infty,
$$
where $\sim$ denotes asymptotic equivalence as $n \to \infty.$ Hence, for sufficiently large $T$, the rejection region of the form $[x, + \infty]$ is justified. 
\end{remark}

%%%%%%%%%%%%%%%%%%%%%%
\subsection{Study in the asymptotic regime}\label{sec:Discussion:asym} 
We discuss our main results (Section~\ref{sec:main}) from the perspective of the asymptotic regime in which $n$ tends to infinity. In this setting, the parameter $\delta$ is allowed to depend on $n$, with $\delta \equiv \delta(n) \to +\infty$. Our goal is to identify the leading terms in the quantiles $\qmaj$ and $\qmajbis$ and to discuss their optimality, based on the results derived from Theorem~\ref{the_theorem} and Corollary~\ref{bound_simple}. Beforehand, we recall that condition~(\ref{A4bis}) requires that, asymptotically, $\underset{t \in \{1,\ldots,T\}}{\min}\Delta_t$ be larger than $M_k \sqrt{\delta / n}$, up to a constant independent of $M_k$ and $n$.
\begin{proposition}\label{prop: asymp1}
Assume that \A{1}$\sim$\A{3} and the gap condition (\ref{A4bis}) are satisfied. We assume that $\delta$ depends on $n$ with $\delta\equiv \delta(n)\to+\infty$ and $\delta(n)/n\to 0$ when $n\to+\infty$. Then, Condition (\ref{lambda_t_condition}) of Theorem \ref{the_theorem} and (\ref{K_3_simplified}) of Corollary \ref{bound_simple} can be written as:  for any $t \in \{1,\ldots,T\}$, 
\begin{equation}
\label{condDeltaLambda}
\lambda_t \Delta_t \geq c_1 M_k^2 \sqrt{\frac{\delta}{\nb}},
\end{equation}
which implies 
$$   \lambda_t \geq c_2  \left( \frac{\delta}{\nb} \right)^{\frac{1}{4}} 
$$
where $c_1$ and $c_2$ do not depend on $M_k$, $\nb$, $\lambda_{1:T}$ and $\Delta_{1:T}$. \\
Furthermore, if (\ref{condDeltaLambda}) is satisfied, then we have
\begin{equation}
    \PPO \left(\widehat{D}^2_T > 2\left(1+c^{-1}\right)^2 T \delta \underset{t\in\{1,\ldots,T\}}{\max}  \left(\frac{\lambda_t - 8 M_k \sqrt{\frac{\delta}{\nb}}}{\lambda_t - \left(8 M_k  + \frac{\kappa M_k^2}{\Delta_t} \right) \sqrt{\frac{\delta}{\nb}}}  \right) \right) \leq  9 T e^{-\delta},
    \label{simplied_version_asymptotic_bound}
\end{equation}
where $\kappa >0$ is a constant independent of $M_k$, $\nb$, $\lambda_{1:T}$ and $\Delta_{1:T}$. 
\label{asymptotic_regime_with_delta_condition}
\end{proposition}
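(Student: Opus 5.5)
The plan is to derive Proposition~\ref{asymptotic_regime_with_delta_condition} directly from Theorem~\ref{the_theorem} and Corollary~\ref{bound_simple}. We expand $K_{1,t}$ and $K_2$ in the regime $\delta\to\infty$, $\delta/n\to 0$, keep the dominant terms, and absorb the others into constants. No new probabilistic argument is needed: the proposition is a rewriting of (\ref{lambda_t_condition}), (\ref{K_3_simplified}) and of $\qmajbis$ once the lower-order terms have been bounded.

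\textbf{Step 1: order of $K_{1,t}$ and $K_2$.} For $n\ge 1$ and $\delta\ge 1$ we have $1+\sqrt{2\delta}\le 3\sqrt{\delta}$ and $1+\sqrt{\delta/2}\le 2\sqrt{\delta}$. Then
\[
K_2\le \frac{36\sqrt2\,M_k^{3/2}\,\delta}{\sqrt{n}}.
\]
In $K_{1,t}$, the first term is at most $12M_k\sqrt{\delta/n}$. The middle term has order
\[
\frac{M_k^2}{\Delta_t}\Big(\frac{\sqrt{\delta}}{n}+\frac{1}{\sqrt n}\Big)\sqrt{\delta}\;\lesssim\;\frac{M_k^2}{\Delta_t}\Big(\sqrt{\frac{\delta}{n}}+\frac{\delta}{n}\Big)\;\lesssim\;\frac{M_k^2}{\Delta_t}\sqrt{\frac{\delta}{n}},
\]
where the last inequality uses $\delta/n\to0$. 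The last term is $2M_k(2+\sqrt\delta)^2/n\lesssim M_k\,\delta/n=o\big(M_k\sqrt{\delta/n}\big)$. Hence, for $n$ large,
\[
K_{1,t}\le \Big(8M_k+\frac{\kappa M_k^2}{\Delta_t}\Big)\sqrt{\frac{\delta}{n}}.
\]
To get the constant $8$ in front of $M_k$, we note that $4M_k(1+2(n-1)/n)\le 12M_k$. We then either keep a generic constant or put the surplus into $\kappa$ using $\Delta_t\le \lambda_1/2\le M_k$ (since $\lambda_1\le \operatorname{tr}\Sigma\le M_k$), which gives $M_k\le M_k^2/\Delta_t$. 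The same trick turns $4M_k\sqrt{\delta/n}$ in the numerator into the stated $8M_k\sqrt{\delta/n}$: we subtract a larger quantity in both the numerator and the denominator, checking that the ratio only increases (it is of the form $(a-x)/(a-y)$ with $y\ge x$), so the bound remains valid.

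\textbf{Step 2: the conditions.} Under (\ref{condDeltaLambda}) with $c_1$ large, we have $\kappa M_k^2\sqrt{\delta/n}/\Delta_t\le \lambda_t\kappa/c_1$. Together with $8M_k\sqrt{\delta/n}\le 8\lambda_t\Delta_t/(c_1M_k)\le 8\lambda_t/c_1$, this gives $\lambda_t>K_{1,t}$, which is (\ref{lambda_t_condition}). For (\ref{K_3_simplified}), we use $\lambda_t-4M_k\sqrt{\delta/n}\ge\lambda_t/2$. The condition then reduces to $\Delta_t\sqrt{\lambda_t/2}\ge c\,36\sqrt2\,M_k^{3/2}\sqrt{\delta/n}$.

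This is the step I expect to be the delicate one, because it is $\Delta_t\sqrt{\lambda_t}$ rather than $\lambda_t\Delta_t$ that appears. We use $\lambda_t\le M_k$, so $\sqrt{\lambda_t}\ge\lambda_t/\sqrt{M_k}$, hence $\Delta_t\sqrt{\lambda_t}\ge\lambda_t\Delta_t/\sqrt{M_k}\ge c_1M_k^{3/2}\sqrt{\delta/n}$. Thus (\ref{condDeltaLambda}) with suitable $c_1$ implies both conditions, and conversely each condition forces an inequality of this type up to constants. This is the sense of ``can be written as''.

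\textbf{Step 3: the $\lambda_t$ bound and conclusion.} Since $\Delta_t\le\lambda_t/2$ (for $t=T$ directly from the definition; for $t<T$ since $\Delta_t\le(\lambda_t-\lambda_{t+1})/2$), we get $\lambda_t^2\ge 2\lambda_t\Delta_t\ge 2c_1M_k^2\sqrt{\delta/n}$. This yields $\lambda_t\ge c_2(\delta/n)^{1/4}$ (absorbing $M_k$, or keeping $M_k$ as the paper's normalization suggests). Finally, plugging the upper bound of Step 1 on $K_{1,t}$ and the monotonicity of the ratio into Corollary~\ref{bound_simple} gives $\qmajbis$ at most the threshold in (\ref{simplied_version_asymptotic_bound}), and the probability bound $9Te^{-\delta}$ follows.
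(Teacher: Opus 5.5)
Your proof is correct and has the same architecture as the paper's: you compute the orders of $K_{1,t}$ and $K_2$ for $1\ll\delta\ll\nb$, translate the spectral conditions, then apply Corollary~\ref{bound_simple}. The individual steps, however, are justified differently.

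The paper starts from \eqref{lambda_t_condition}, showing that it becomes \eqref{condDeltaLambda} and implies \eqref{K_3_simplified}. It works inside a $\lesssim$-calculus that explicitly uses $1\lesssim M_k$ and $\Delta_t\lesssim 1$. You prove the direction actually needed to invoke the Corollary: \eqref{condDeltaLambda} with $c_1$ large implies both \eqref{lambda_t_condition} and \eqref{K_3_simplified}. You use only the structural facts $\lambda_t\le\operatorname{tr}\Sigma\le M_k$ and $\Delta_t\le\lambda_t/2$. These facts buy you three things:
\begin{itemize}
\item $M_k\le M_k^2/\Delta_t$, with no normalization of $M_k$ or $\Delta_t$.
\item $\Delta_t\sqrt{\lambda_t}\ge\lambda_t\Delta_t/\sqrt{M_k}$, which lands exactly on the $M_k^{3/2}\sqrt{\delta/\nb}$ scale of $K_2/\sqrt{\delta}$ and gives $c$ proportional to $c_1$. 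The paper instead passes through the intermediate bound $\gtrsim M_k^2\sqrt{\delta/\nb}$, which tacitly needs $\lambda_t\lesssim 1$, and then discards $M_k^{1/2}$ via $M_k\gtrsim 1$.
\item A lower bound on $\lambda_t$ itself; the paper's chain bounds $\lambda_{t-1}$.
\end{itemize}

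Your monotonicity step for the ratio is also absent from the paper, whose last display keeps $4M_k$ in the numerator and no $8M_k$ in the denominator. To make it airtight, state it as $\frac{A-x}{A-y}\le\frac{A-x-u}{A-y-u}$ for $u\ge 0$, $y\ge x$, $A>y+u$, applied with $u=4M_k\sqrt{\delta/\nb}$. What you then need is $K_{1,t}\le(4M_k+\kappa M_k^2/\Delta_t)\sqrt{\delta/\nb}$, which your absorption via $M_k\le M_k^2/\Delta_t$ provides. Note that raising $4$ to $8$ in the numerator alone would decrease the ratio, so the shift must be applied to both numerator and denominator.

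Two minor points remain. The converse you allude to holds for \eqref{lambda_t_condition}: its middle term alone gives $\lambda_t\Delta_t\ge 12\sqrt{2}\,M_k^2\sqrt{\delta/\nb}$ for $\nb\ge 2$. It does not hold for \eqref{K_3_simplified} alone, which only yields $\Delta_t\sqrt{\lambda_t}\gtrsim M_k^{3/2}\sqrt{\delta/\nb}$. Also, as in the paper, having $c_2$ independent of $M_k$ requires $M_k$ to be bounded below.
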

Proposition~\ref{asymptotic_regime_with_delta_condition} is proved in Appendix~\ref{sec:proofProp1}. This result states that if Conditions (\ref{A4bis}) and (\ref{lambda_t_condition}) are satisfied, then condition (\ref{K_3_simplified}) of Corollary \ref{bound_simple} is automatically satisfied for  a positive constant $c$ not depending on $M_k$, $\nb$, $\lambda_{1:T}$ and $\Delta_{1:T}$, along with Inequality \eqref{condDeltaLambda}. 
A careful examination of the proof shows that if \eqref{condDeltaLambda} is satisfied with $c_1$ sufficiently large, then Condition (\ref{lambda_t_condition}) is also true.
We recall that under Assumption~\eqref{A4bis}, the spectral gaps $\Delta_t$ cannot be too small. The first part of Proposition~\ref{asymptotic_regime_with_delta_condition} further strengthens this requirement. At the same time, the eigenvalues are allowed to converge to zero as $n \to \infty$, but at a rate slower than the parametric rate. Lower bounds on both eigenvalues and spectral gaps are essential to obtain our result, and neither condition can be deduced from the other. 

To discuss the second part of Proposition~\ref{prop: asymp1}, note that since $\sqrt{\delta / \nb}$ is negligible compared to $\lambda_t$ for all $t \in \{1,\ldots,T\}$, Inequality~(\ref{simplied_version_asymptotic_bound}) implies that, in the asymptotic regime, the quantile is close to $2(1 + c^{-1})^2 T \delta$. In particular, if Condition~(\ref{K_3_simplified}) of Corollary~\ref{bound_simple} holds for a sufficiently large constant $c$, the quantile is close to $2T\delta$. Recalling that a chi-squared distribution with $T$ degrees of freedom has expectation equal to $T$, we obtain an asymptotic quantile that matches, up to the multiplicative factor $2\delta$, the quantile of a chi-squared distribution with $T$ degrees of freedom. This aligns with the known asymptotic convergence of the test statistic $\widehat{D}^2_T$ to a $\chi^2(T)$ distribution \citep{eric2007testing}. Moreover, if $D^2_T\sim\chi^2(T)$, then we have the following sharp concentration inequality
	$$\P(D^2_T\geq T+2\sqrt{Tz}+2z)\leq \exp(-z),\quad z>0,$$
	see Lemma~1 of \cite{LaurentMassart2000}. 
	Overall, the results obtained in the asymptotic regime indicate that our proposed quantile is optimal up to the multiplicative factor $2\delta$, which arises from the use of concentration inequalities to control the random terms appearing in the definition of $\widehat{D}^2_T$ and to estimate the eigenelements $\lambda_{1:T}$ and $f_{1:T}$. Finally, observe that if assumptions of Proposition~\ref{prop: asymp1} hold with $\delta=\gamma\log(n)$, for some constant $\gamma>0$, then the probability in \eqref{simplied_version_asymptotic_bound} decreases at a polynomial rate
	$$ \PPO \left(\widehat{D}^2_T > 2\left(1+c^{-1}\right)^2 T \gamma \log(\nb) \underset{t=1,\ldots,T}{\max}  \left(\frac{\lambda_t - 8 M_k \sqrt{\frac{\gamma \log(\nb) }{\nb}}}{\lambda_t - \left(8 M_k  + \frac{\kappa M_k^2}{\Delta_t} \right) \sqrt{\frac{\gamma \log(\nb) }{\nb}}}  \right) \right) \leq \frac{9 T}{\nb^{\gamma}}.$$

In the next section, we provide a fully data-driven testing procedure with a reasonable computational cost which can be used by practitioners. 
%%%%%%%%%%%%%%%%%%%%%%
%%%%%%%%%%%%%%%%%%%%%%
\section{A data-driven calibration}\label{sec:datadriven}
%%%%%%%%%%%%%%%%%%%%%%
\subsection{Bringing the gap between asymptotic and non asymptotic quantiles}
\label{sec:simplified:asym}
 Theorem~\ref{the_theorem} and Corollary~\ref{bound_simple} provide non-asymptotic upper bounds for $\trueq{\widehat{D}^2_T}$, for any $\alpha \in (0,1)$. Since $\qmaj$ and $\qmajbis$ are explicit and implementable, provided that the spectral elements and the truncation level $T$ are known, it yields a calibrated test under the null hypothesis. In practice, although the eigenelements and spectral gaps can be replaced by their empirical estimates when $\nb$ is sufficiently large, our bounds involve absolute constants that may be large, potentially leading to overly conservative tests. Hence, we propose treating these constants as hyperparameters to be tuned using the available data. To facilitate practical tuning, we further simplify the quantiles from (\ref{x_bound}) and~(\ref{x_simplified}), by retaining only the leading-order terms in $\nb$, which capture the dominant behavior of the bounds in the non-asymptotic regime and reduce to a reasonable number of hyperparameters to tune. The following proposition provides a calibrated test under the null hypothesis where the parameter $\delta$ is now treated as a constant independent of $\nb$.
\begin{proposition}
Assume \A{1}$\sim$\A{3} and the gap condition (\ref{A4bis}) hold and let $\alpha \in (0,1)$. The spectral conditions (\ref{lambda_t_condition}) of Theorem \ref{the_theorem} and (\ref{K_3_simplified}) of Corollary \ref{bound_simple} can be written as: for all $t \in \{1,\ldots,T\}$,
\begin{equation}
    \lambda_t \Delta_t \geq c_4 M_k{^2} \frac{1}{\sqrt{\nb}},
    \label{lambda_t_condition_simplified}
\end{equation}
for $c_4$ depending on $\alpha$ but not on $M_k$, $\nb$, $\lambda_{1:T}$ and $\Delta_{1:T}$. \\
Furthermore, if (\ref{lambda_t_condition_simplified}) is satisfied, then we have, 
 \begin{equation}
         \PPO \left(\widehat{D}^2_T >  \trueq{\chi^2(T)} \underset{t=1,\cdots,T}{\max} \left( \frac{1}{1 - \frac{\rho_t}{\sqrt{\nb} \lambda_t \Delta_t}} \right)
 \right) \leq  \alpha,
         \label{our_used_quantile}
     \end{equation}
     where
     \begin{equation}
         \trueq{\chi^2(T)} \underset{t=1,\cdots,T}{\max} \left( \frac{1}{1 - \frac{\rho_t}{\sqrt{\nb} \lambda_t \Delta_t}} \right)
        \label{simplied_quantile}
    \end{equation}
represents the asymptotic version of $\qmajbis$ defined in (\ref{x_simplified}) and where $\rho_{1:T} >0$ are some constants such that the denominators above remain positive.  
\label{Condition_simplification}
\end{proposition}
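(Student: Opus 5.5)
The plan is to apply Corollary~\ref{bound_simple} with a fixed $\delta$ calibrated to the level, so that $\delta$ no longer depends on $\nb$. Set $\delta=\delta_\alpha:=\log(9T/\alpha)$, so that $9Te^{-\delta}=\alpha$. Then every factor in $K_{1,t}$ and $K_2$ that involves only $\delta$ becomes an absolute constant depending on $\alpha$ (and $T$). The proof then has two parts: rewrite the conditions (\ref{lambda_t_condition}) and (\ref{K_3_simplified}) as (\ref{lambda_t_condition_simplified}), and bound the ratio appearing in $\qmajbis$.

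\textbf{Step 1: leading-order form of the constants.} With $\delta$ constant and $(\nb-1)/\nb\le 1$, I bound each term of $K_{1,t}$ at order $\nb^{-1/2}$:
\begin{equation*}
K_{1,t}\le \frac{a_\alpha M_k}{\sqrt{\nb}}+\frac{b_\alpha M_k^2}{\Delta_t\sqrt{\nb}}+\frac{e_\alpha M_k}{\nb},\qquad K_2=\frac{d_\alpha M_k^{3/2}}{\sqrt{\nb}},
\end{equation*}
with constants $a_\alpha,b_\alpha,d_\alpha,e_\alpha$ depending only on $\alpha$. Here the $1/\nb$ contributions inside the $M_k^2/\Delta_t$ term are bounded by their $\nb^{-1/2}$ counterparts.

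Next I use two facts:
\begin{itemize}
\item $\lambda_t\le \mathrm{tr}(\Sigma)\le M_k$, hence $\Delta_t\le \lambda_t/2\le M_k$;
\item the gap condition (\ref{A4bis}) gives $\Delta_t\gtrsim M_k/\sqrt{\nb}$.
\end{itemize}
Together these show that the $M_k/\sqrt{\nb}$ and $M_k/\nb$ terms are dominated by $M_k^2/(\Delta_t\sqrt{\nb})$. Therefore
\begin{equation*}
K_{1,t}\le \frac{\rho_t M_k^{2}}{\Delta_t\sqrt{\nb}}
\end{equation*}
for a constant $\rho_t$ depending on $\alpha$. I will absorb $M_k^2$ into $\rho_t$ in the final statement.

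Consequently (\ref{lambda_t_condition}), i.e. $\lambda_t>K_{1,t}$, follows from $\lambda_t\Delta_t\ge c_4M_k^2/\sqrt{\nb}$ with $c_4>\rho_t/M_k^2$.

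For (\ref{K_3_simplified}), the condition $\lambda_t\Delta_t\ge c_4M_k^2/\sqrt{\nb}$ with $c_4$ large implies $\lambda_t\ge 8M_k\sqrt{\delta/\nb}$, using $\Delta_t\le M_k$. Hence $\lambda_t-4M_k\sqrt{\delta/\nb}\ge\lambda_t/2$, and it suffices to have
\begin{equation*}
\Delta_t\sqrt{\lambda_t/2}\ge c\,d_\alpha M_k^{3/2}\big/\sqrt{\delta\nb}.
\end{equation*}
Writing $\Delta_t\sqrt{\lambda_t}\ge \Delta_t\lambda_t/\sqrt{M_k}$ reduces this again to $\lambda_t\Delta_t\gtrsim M_k^2/\sqrt{\nb}$. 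This establishes the first claim.

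\textbf{Step 2: the quantile.} Since the numerator $\lambda_t-4M_k\sqrt{\delta/\nb}$ is at most $\lambda_t$, Corollary~\ref{bound_simple} gives the threshold
\begin{equation*}
\qmajbis\le 2(1+c^{-1})^2T\delta_\alpha\max_t\Big(1-\frac{K_{1,t}}{\lambda_t}\Big)^{-1}\le 2(1+c^{-1})^2T\delta_\alpha\max_t\Big(1-\frac{\rho_t}{\sqrt{\nb}\lambda_t\Delta_t}\Big)^{-1},
\end{equation*}
and the denominators are positive precisely under (\ref{lambda_t_condition_simplified}).

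The main obstacle is the last replacement of the prefactor $2(1+c^{-1})^2T\delta_\alpha$ by $\trueq{\chi^2(T)}$. This is not a strict upper bound, since the $\chi^2$ quantile is of order $T+\sqrt{T\log(1/\alpha)}+\log(1/\alpha)$, which is smaller. I would therefore justify (\ref{our_used_quantile}) as the asymptotic version of (\ref{x_simplified}), in the spirit of the discussion after Proposition~\ref{prop: asymp1}. As $\nb\to\infty$, the max factor tends to $1$ and the $\chi^2(T)$ limit of \cite{eric2007testing} identifies the leading term. The unknown multiplicative constant is then pushed into the hyperparameters $\rho_{1:T}$, which are tuned from data. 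The only rigorous content is the structural form of the correction factor; everything else reduces to careful bookkeeping in Step 1.
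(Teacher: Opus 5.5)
Your Step~1 and the rigorous half of Step~2 are correct, and they follow the route the paper has in mind. The paper contains no written proof of this proposition: Appendix~\ref{la_section_proof_asympt} only treats Proposition~\ref{asymptotic_regime_with_delta_condition}. The intended argument is the fixed-$\delta$ analogue of that proof: take $\delta=\log(9T/\alpha)$, observe $K_{1,t}\approx M_k^2/(\Delta_t\sqrt{\nb})$ and $K_2\approx M_k^{3/2}/\sqrt{\nb}$, and read off (\ref{lambda_t_condition_simplified}). Your use of $\Delta_t\le\lambda_t/2\le\mathrm{Tr}(\Sigma)\le M_k$ is cleaner than the appendix. The appendix instead relies on $M_k\gtrsim 1$ and $\Delta_t\lesssim 1$, and passes from a lower bound on $\lambda_{t-1}$ to one on $\lambda_t$.

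Two bookkeeping remarks. First, with $\rho_t$ not yet absorbing $M_k^2$, the requirement is simply $c_4>\rho_t$, together with $c_4\ge 8\sqrt{\delta_\alpha}$ and $c_4\ge c\,d_\alpha\sqrt{2/\delta_\alpha}$. Second, the converse direction of ``can be written as'' is immediate, since $K_{1,t}\ge 96M_k^2((\nb-1)/\nb)^2/(\Delta_t\sqrt{\nb})$. What you have rigorously established is (\ref{our_used_quantile}) with $\trueq{\chi^2(T)}$ replaced by $2(1+c^{-1})^2T\log(9T/\alpha)$.

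The gap is the final replacement, and you diagnose it correctly. $\trueq{\chi^2(T)}$ is strictly smaller than $2T\log(9T/\alpha)$, so lowering the threshold cannot be deduced from Corollary~\ref{bound_simple}. The paper does not close this step either; it only labels (\ref{simplied_quantile}) the ``asymptotic version'' of $\qmajbis$. So the inequality (\ref{our_used_quantile}) at fixed $\nb$ is unproved in both.

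Your proposed patch, however, does not work. If the $\rho_t$ are constants independent of $\nb$, then $A_{T,n}\to 1$. The fixed factor $r_\alpha:=2(1+c^{-1})^2T\delta_\alpha/\trueq{\chi^2(T)}>1$ therefore cannot be absorbed into them. Absorbing it requires $\rho_t\ge(1-r_\alpha^{-1})\sqrt{\nb}\lambda_t\Delta_t$ for some $t$, i.e.\ $\rho_t$ growing with $\nb$ and depending on the unknown spectrum. That is merely a reparametrization of $\qmajbis$, and it contradicts $A_{T,n}\to1$. Likewise, the $\chi^2(T)$ limit of \cite{eric2007testing} only shows that the rejection probability tends to $\alpha$, not that it is at most $\alpha$ for each $\nb$.

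A genuine proof of (\ref{our_used_quantile}) would need a quantitative, non-asymptotic comparison between the law of $\widehat{D}^2_T$ and $\chi^2(T)$. Concretely, this means a Berry--Esseen-type bound whose error is dominated by the inflation term $\rho_t/(\sqrt{\nb}\lambda_t\Delta_t)$. That ingredient is not contained in Theorem~\ref{the_theorem} or its proof.
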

Proposition \ref{Condition_simplification}, which holds for any $n$, provides upper bounds for $\trueq{\widehat{D}^2_T}$, that are simpler than $\qmaj$ and $\qmajbis$, but at the price of unknown constants $\rho_{1:T} >0$. As in Proposition \ref{asymptotic_regime_with_delta_condition}, both eigenvalues and spectral gaps must be sufficiently large, particularly exceeding the noise level $\frac{1}{\sqrt{\nb}}$. Inequality (\ref{lambda_t_condition_simplified}) can be interpreted as a signal detection condition, reflecting the trade-off between the $\alpha$-level and the complexity of estimating the eigenelements. The quantity in (\ref{simplied_quantile}) is obtained by multiplying the chi-squared quantile  $\trueq{\chi^2(T)}$ by $A_{T,n}$, with
$$A_{T,n}=\underset{t=1,\cdots,T}{\max} \left( \frac{1}{1 - \frac{\rho_t}{\sqrt{\nb} \lambda_t \Delta_t}} \right).$$
Clearly, $A_{T,n} \to 1$ as $n \to +\infty$, and asymptotically we recover the chi-squared quantile when $T$ is fixed.
Moreover, observe that $A_{T,n}$ does not depend on $t$. The maximization over $t$ is a technical trick that comes into play in the last lines of the proof of Theorem \ref{the_theorem}. We therefore propose averaging the contributions over all $t$-directions, leading to the following non-asymptotic upper bound of the quantile:
\begin{equation}
\label{eq:quantile-moy}
\qmajfin_{1-\alpha}\left(\nb,\alpha, \rho_{1:T},\lambda_{1:T},\Delta_{1:T}\right) =\trueq{\chi^2(T)} \times \frac{1}{T}\sum_{t=1}^T\left( \frac{1}{1 - \frac{\rho_t}{\sqrt{\nb} \lambda_t \Delta_t}} \right),
\end{equation}
also denoted $\qmajfin_{1-\alpha}$  in the sequel. Empirical studies support taking the average, providing calibrated tests for each $T$ that are less conservative compared to using the maximum (see Figure \ref{Fig:level_simulations_T_max_005} in Appendix \ref{sec:additional_figures}).

%%%%%%%%%%%%%%%%%%%%%%%%%
\subsection{Final quantile approximation and calibration} 
\label{subs:finalquantilecalib}
We still assume that the conditions of Proposition \ref{Condition_simplification} hold. Then, we can use the expression $\qmajfin_{1-\alpha} =\qmajfin_{1-\alpha}\left(\nb,\alpha, \rho_{1:T},\lambda_{1:T},\Delta_{1:T}\right)$ given in (\ref{eq:quantile-moy}) to obtain a theoretically calibrated test. For practical implementation, three challenges must be addressed. The first one is the estimation of the eigenelements; the second one is the calibration of $\rho_{1:T} >0$ for a given value of $T>0$; and the third one is the choice of $T$. Regarding the first issue, replacing $\lambda_t$ and $\Delta_t$ with their estimators $\widehat{\lambda}_t$ and $\widehat{\Delta}_t$ is theoretically justified thanks to Assumption~(\ref{lambda_t_condition_simplified}). Then, $\qmajfin_{1-\alpha}$ can be estimated by
\begin{equation}
\label{eq:quantile-estimated}
\hatqmajfin_{1-\alpha}(T)=\trueq{\chi^2(T)} \times \frac{1}{T}\sum_{t=1}^T\left( \frac{1}{1 - \frac{\rho_t}{\sqrt{\nb} \widehat{\lambda_t} \widehat{\Delta_t}}} \right).
\end{equation}

In (\ref{eq:quantile-estimated}), the hyperparameters $\rho_{1:T}$ have to be calibrated. We propose to calibrate them directly on the available dataset, depending on $\nb$ and the complex structure between variables, and without any data splitting. 

For a given value of $T$, a computable and operational version of the quantile~\eqref{eq:quantile-moy} requires calibrating the parameters $\rho_t$, which is performed using the data-driven algorithm detailed below.
The key heuristic we propose is to choose the $\rho_t$'s of the same order as their respective denominators in~\eqref{eq:quantile-moy}, so as to ensure the convergence of $\hatqmajfin_{1-\alpha}(T)$ to the correct asymptotic quantile $\trueq{\chi^2(T)}$. To simplify the calibration, we set all the $\rho_t$’s equal to a common value $\rho$. To ensure that $\hatqmajfin_{1-\alpha}(T)$ remains well defined, we take
\begin{equation}
\label{eq:rhofinal}
\rho_t = \rho = \frac 12  \sqrt{\nb} \underset{t \in \{1,\ldots,T\}}{\min} \left\{ \widehat{\lambda}_t \widehat{\Delta}_t \right\} .
\end{equation}
With this definition, $\hatqmajfin_{1-\alpha}(T)$ is thus an upward adjustment of the chi-squared quantile $\trueq{\chi^2(T)}$, which is desirable, as $\trueq{\chi^2(T)}$ underestimates the appropriate threshold (see Figure \ref{Fig:level_simulations_T_005} in Section \ref{sec:simu}).
The test decision then directly follows from this quantile approximation and the computation of $\widehat{D}^2_T$: if $\widehat{D}^2_T \leq \hatqmajfin_{1-\alpha}(T)$, the null hypothesis $H_0$ is retained; otherwise, $H_0$ is rejected.
\begin{remark} 
In the expression \eqref{eq:rhofinal}, we take half of the minimum to ensure that $\rho$ is smaller than  all the quantities $\sqrt{n}  \widehat{\lambda}_t \widehat{\Delta}_t$'s. Indeed, setting $ \rho = \eta \sqrt{\nb} \min_{t \in \{1,\ldots,T\}} \left\{ \widehat{\lambda}_t \widehat{\Delta}_t \right\}$ 
for some  $0 <\eta < 1$
ensures that
$$
\trueq{\chi^2(T)} < \hatqmajfin_{1-\alpha}(T)
< \frac{1}{1- \eta}\, \trueq{\chi^2(T)}.$$
This tuning parameter controls the closeness of the data-driven quantile to the asymptotic quantile and the conservative level of the test.
When $\eta$ tends to $0$, the quantile $\trueq{\chi^2(T)}$ is recovered, whereas, when $\eta$ tends to $1$, $\hatqmajfin_{1-\alpha}(T)$ tends to infinity. 
Empirical studies support setting $\eta = 1/2$;  however, this parameter can be further adjusted depending on the context (see Figure \ref{Fig:level_simulations_T_eta_005} in Appendix \ref{sec:additional_figures}).
\end{remark}
In practice, we also need to choose the number of eigendirections $T$. We propose the following rule of thumb
\begin{equation}
\label{eq:choixT}
\widehat T = \max \left\{ t : \ \forall s \leq t, \ 
\widehat{\lambda}_s \geq \frac{\sqrt{\widehat{\lambda}_1}}{\sqrt{2\nb}}, 
\ \text{and} \ 
2 \widehat{\Delta}_s \geq \frac{\sqrt{\widehat{\Delta}_1}}{\sqrt{\nb}} \right\}.
\end{equation}
If $\widehat T = 0$, we set $\hatqmajfin_{1-\alpha}= +\infty$, in which case the test does not reject the null hypothesis. \\
This choice for $\widehat T$ validates the estimator performances of the eigenelements (larger than the signal-to-noise ratio). The quantities $\widehat{\lambda}_1$ and $\widehat{\Delta}_1$ in (\ref{eq:choixT}) stand for approximation of the variance of all the eigenvalues and spectral gaps respectively.

 In summary, our testing procedure is fully determined by equations~\eqref{eq:quantile-estimated}, \eqref{eq:rhofinal} and \eqref{eq:choixT}. This procedure is entirely data-driven and does not require any data splitting, which is commonly used in the literature.

%%%%%%%%%%%%%%%%%%%%%%
%%%%%%%%%%%%%%%%%%%%%%
\section{Numerical experiments}\label{sec:simu}
%%%%%%%%%%%%%%%%%%%%%%
\subsection{Simulation design}\label{sec:simulation protocol}

We assess the empirical performance of our procedure through simulations conducted under the null hypothesis $H_0:\mathbb{P}_X=\mathbb{P}_Y$. We focus on the balanced designs with $n_X=n_Y=n/2$. Two independent samples $X_1,\dots,X_{n/2}$ and $Y_1,\dots,Y_{n/2}$  are generated from the same isotropic distributions of \citep{hagrass2023spectral}: (i) Gaussian $\mathcal{N}_d(0,I_d)$; (ii)  uniform $\mathcal{U}_d([0,1]^d)$; (iii) Cauchy with location parameter $m=0$ and scale parameter $s=1$ (independent coordinates); and (iv) von Mises--Fisher on the unit sphere with concentration parameter $\kappa=4$ and mean direction $\mu = d^{-1/2}(1,\ldots,1)_d^\top$. We consider sample sizes $n \in \{100,1000,5000\}$ and dimensions $d \in \{2,10,100\}$. For each configuration $(n,d)$ and each distribution, we perform $R=10000$ independent repetitions.  

In addition to purely simulated data, we considered the MNIST dataset \citep{lecun2010mnist}, which consist of images of written digits (0 to 9), downsampled to $d=7 \times 7 = 49$ as in \cite{schrab2023mmd,hagrass2023spectral}. Based on these data, we denote by $\mathbb{P}:\{0,1,2,3,4,5,6,7,8,9\}$, the images containing all digits. We use this distribution to generate data under the null hypothesis. Then we also consider: $\mathbb{Q}_1:\{1,3,5,7,9\}$ (strongest separation with respect to $\mathbb{P}$), $\mathbb{Q}_2:\{0,1,3,5,7,9\}$, $\mathbb{Q}_3:\{0,1,2,3,5,7,9\}$, $\mathbb{Q}_4:\{0,1,2,3,5,7,9\}$, and $\mathbb{Q}_5:\{0,1,2,3,4,5,7,9\}$ (weakest separation with respect to $\mathbb{P}$). The distributions are constructed so that the discrepancy with $\mathbb{P}$ decreases progressively as additional digit classes are included, allowing us to assess how the procedure adapts to alternatives of varying difficulty. We sample $R=10,000$ of such distributions.

The procedures are evaluated at nominal levels $\alpha \in \{0.05,0.01\}$, and we compare the proposed non-asymptotic data-driven calibrated quantile with the asymptotic $\chi^2$ approximation. The st-nMMD test is performed thanks to the \texttt{ktest} Python package \citep{ozier2024kernel}, with the Gaussian kernel with bandwidth tuned thanks to the median heuristic \citep{garreau2017large}. Let $\widehat{D}^{2,(r)}_T$ denote the test statistic computed at repetition $r \in \{1, \ldots,10000\} $, and let   $\hatqmajfin_{1-\alpha}^{(r)}(T)$ be our operational version, fully data-driven, for the $(1-\alpha)$-quantile as defined in Eq.~(\ref{eq:quantile-estimated}). The empirical level is estimated by
\[
\widehat{\mathcal{E}}(\alpha,T)
=
\frac{1}{R}
\sum_{r=1}^{R}
\mathbf{1}_{
\left\{
\widehat{D}^{2,(r)}_T
>
\hatqmajfin_{1-\alpha}^{(r)}(T) 
\right\}},
\]
which provides a direct estimate of the Type-I error of the test. Note that this error rate depends on $T$, the number of eigenelements of the within-group covariance operator, which can either be fixed or selected using the procedures described in Section~\ref{sec:selection T}.
%%%%%%%%%%%%%%%%%%%%%%
\subsection{Calibration performance}\label{sec: type I error}
Figures \ref{Fig:level_simulations_T_005} and  \ref{Fig:level_simulations_T_001} show that the empirical level of both the asymptotic and non-asymptotic procedures does not depend on the data-generating distribution, but rather on the number of observations and dimensions. As expected, the asymptotic $\chi^2$-based procedure fails to achieve calibration in the non-asymptotic regime (e.g., $n=100$), particularly in low-dimensional settings (e.g., $d=2$). In contrast, our non-asymptotic procedure remains calibrated (all the $95\%$ confidence interval below $\alpha$) regardless of the sample size and dimension (for small $T$). These observations validate our proposed data-driven quantile (\ref{eq:quantile-estimated}) and hyperparameters calibration (\ref{eq:rhofinal}). For $d = 2,10$, the empirical level  increases with the truncation parameter, as non-informative directions can inflate the false positive rejection rate. Notably, higher dimensionality ($d=100$) tends to moderate the tests level,  resulting in more conservative procedures, regardless of the quantile used. This effect is more pronounced when the sample size is small. A similar pattern is observed in the MNIST-based simulations (Figure \ref{Fig:level_mnist_T}, $d = 49$), where the asymptotic procedure becomes more conservative for small $n$.
%%%%%%%%%%%%%%%%%%%%%%
\subsection{Power performance}
Since the proposed procedure is calibrated and slightly conservative under the null hypothesis, we also investigate its empirical power under alternatives. In particular, the non-asymptotic quantile we propose is data-adaptive, as it depends on the eigenvalues of the within-group covariance operator. These eigenvalues generally differ under the null and under the alternative, leading to different calibration values across scenarios. This contrasts with the asymptotic $\chi^2$-approximation, whose quantile is deterministic and does not depend on the data. Consequently, the conservative behavior observed under $H_0$ does not necessarily lead to a loss of power under the alternative, since the quantile adapts to the underlying covariance structure.

To investigate the adaptivity properties of our procedure we consider the MNIST dataset and challenge distribution $\mathbb{P}:\{0,1,2,3,4,5,6,7,8,9\}$, against: $\mathbb{Q}_1, \hdots, \mathbb{Q}_5$. The distributions are constructed so that the discrepancy with $\mathbb{P}$ decreases progressively as additional digit classes are included, allowing us to assess how the procedure adapts to alternatives of varying difficulty.

The results of Figure \ref{Fig:power_mnist} should be interpreted with caution since the simulations under the null assumption $H_0$ (see Section \ref{sec: type I error} above) showed that the asymptotic $\chi^2$-based test is not properly calibrated when $n=100$, exhibiting an inflated Type-I error. Consequently, and as expected, its empirical power is artificially higher than that of the proposed non-asymptotic test in this regime. Figure \ref{Fig:power_mnist} shows that empirical power depends on the choice of the distribution $\mathbb{Q}_1,\ldots,\mathbb{Q}_5$. As expected, the more this set differs from $\mathbb{P}$, the higher the power. In most cases, empirical power also increases with the truncation parameter, as more directions characterizing differences between the two distributions are included in the st-nMMD statistic. As the sample-size increases, the empirical power of the non-asymptotic test is indeed lower than the  asymptotic $\chi^2$-based procedure, but without any substantial loss. When $n=5000$, both procedures exhibit essentially identical power performances, confirming that the proposed method remains competitive while ensuring valid finite-sample calibration. 

%%%%%%%%%%%%%%%%%%%%%%
\subsection{Selection of the truncation parameter}\label{sec:selection T}
To assess the performance of the proposed selection algorithm for choosing the truncation parameter $T$, we first examine the empirical distribution of the selected values under the null hypothesis (Figures~\ref{Fig:freq_simulations_T_values} and \ref{Fig:freq_simulations_T_values_mnist_level}) and under the MNIST alternatives (Figure~\ref{Fig:freq_mnist_T_values}). A clear trend emerges from the simulations: under both the null and alternative hypotheses, the selected truncation remains typically small, regardless of the underlying distribution and the sample size. Consistent with Section~\ref{sec: type I error}, the selected parameter $T$ does not depend on the data-generating distribution under the null, but rather on the number of observations and dimensions. When $n$ increases, the selected parameter $T$ increases too. The ambient dimension $d$ also has a pronounced effect on the selection of the parameter $T$. In particular, the selected truncation level decreases as the dimension increases, and remains especially low when $d=100$. This behavior is consistent with the fact that, under the null hypothesis (equality of distributions), the informative signal in higher-order components is negligible. Therefore, selecting a small number of dimensions is theoretically coherent. The empirical level obtained on simulated data remains well controlled when the truncation parameter $T$ is selected by the proposed procedure (Figure~\ref{Fig:level_simulations_T_select}, and \ref{Fig:level_mnist_T_values}), uniformly across dimensions and sample sizes. This confirms that the data-driven selection of $T$ does not compromise the finite-sample validity of the test. Regarding empirical power (Figure~\ref{Fig:power_mnist_T_select}), the same qualitative trends observed when varying $T$ are recovered when $T$ is selected automatically. As expected, since the procedure is primarily designed to guarantee non-asymptotic control of the Type-I error, it may exhibit a slight loss of power in practice compared to asymptotic calibration. Nevertheless, this trade-off appears moderate and remains consistent with the objective of ensuring rigorous finite-sample level control. In particular, empirical power is close to $1$ when $n=5000$.

%%%%%%%%%%%%%%%%%%%%%%
%%%%%%%%%%%%%%%%%%%%%%
\section{Conclusion}
In this work, we investigate kernel-based two-sample testing of equality of distributions. In particular, we propose a new non-asymptotic statistical testing procedure based on the normalized Maximum Mean Discrepancy. While regularization is usually chosen as the ridge penalty, we propose studying the truncated spectral decomposition instead. The test based on the truncated spectrally regularized normalized MMD (st-nMMD) has already been studied theoretically in an asymptotic framework involving the chi-squared quantile but non-asymptotic theoretical approaches were lacking. Still, we show that chi-squared quantiles fail to yield a calibrated test in finite samples. 
We first derive an exponential deviation inequality for the st-nMMD statistic under the null hypothesis, providing a sharp and explicit upper bound on the non-asymptotic quantile and thereby a calibrated test. 
We establish the optimality of the quantile's upper bound secondly in the asymptotic regime, and thirdly propose an estimator of the non-asymptotic quantile involving the dominant terms of the upper-bound. To bridge the gap between asymptotic and non-asymptotic procedures, we emphasize that our estimator is an upward adjustment of the chi-squared one. 
Our estimator is data-adaptive, as it depends on the eigenvalues of the within-group covariance operator and on hyperparameters that we tune directly on the dataset through a practical algorithm that we implement. In contrast to previous works, this algorithm does not require data-splitting. It includes calibration of the truncation parameter, thereby fixing the number of spectral components that retain sufficient information for reliable testing.
Lastly, numerical experiments on both simulated data and the MNIST dataset demonstrate the performance of the st-nMMD procedure. In particular, tests are always calibrated regardless of the distributions, sample sizes or dimensions of the two samples. 
Even though our test is a bit conservative,  
we show, across different configurations under the alternative hypothesis created from the MNIST dataset, that it is competitive in terms of empirical power. 
The data-adaptive nature of our quantile is key to performing well under both the null and the alternative assumptions. 
Finally, our proposed kernel-based two-sample testing of equality of distribution involves spectral truncation as the regularization, avoiding using any methods that separate data; is based on a non-asymptotic approach adapted to the high-dimensional context and to the small or moderate samples; and uses a data-adaptive quantile which adapts according to both null and alternative configurations allowing to be provide a calibrated statistical test with a competitive power. 

As future work, we will naturally study the st-nMMD statistic  under an alternative hypothesis to assess the test's power in a minimax perspective. A perspective of our work can be to relax some assumptions as the well-balanced sample sizes. We note that similar results to this work can be obtained without much effort by relaxing Assumption \A{3} (except for the gap around the truncation, which must remain strictly positive) at the cost of having to account for the multiplicity of eigenvalues, which would make the calculations more complex. 

\acks{
	The authors would like to thank Patrice Bertail, Gilles Blanchard, Martin Wahl for fruitful discussions and input. The research was supported by the project AI4scMed, France 2030
	ANR-22-PESN-0002.
}

\begin{figure}
	\begin{center}
		\includegraphics[scale=0.7]{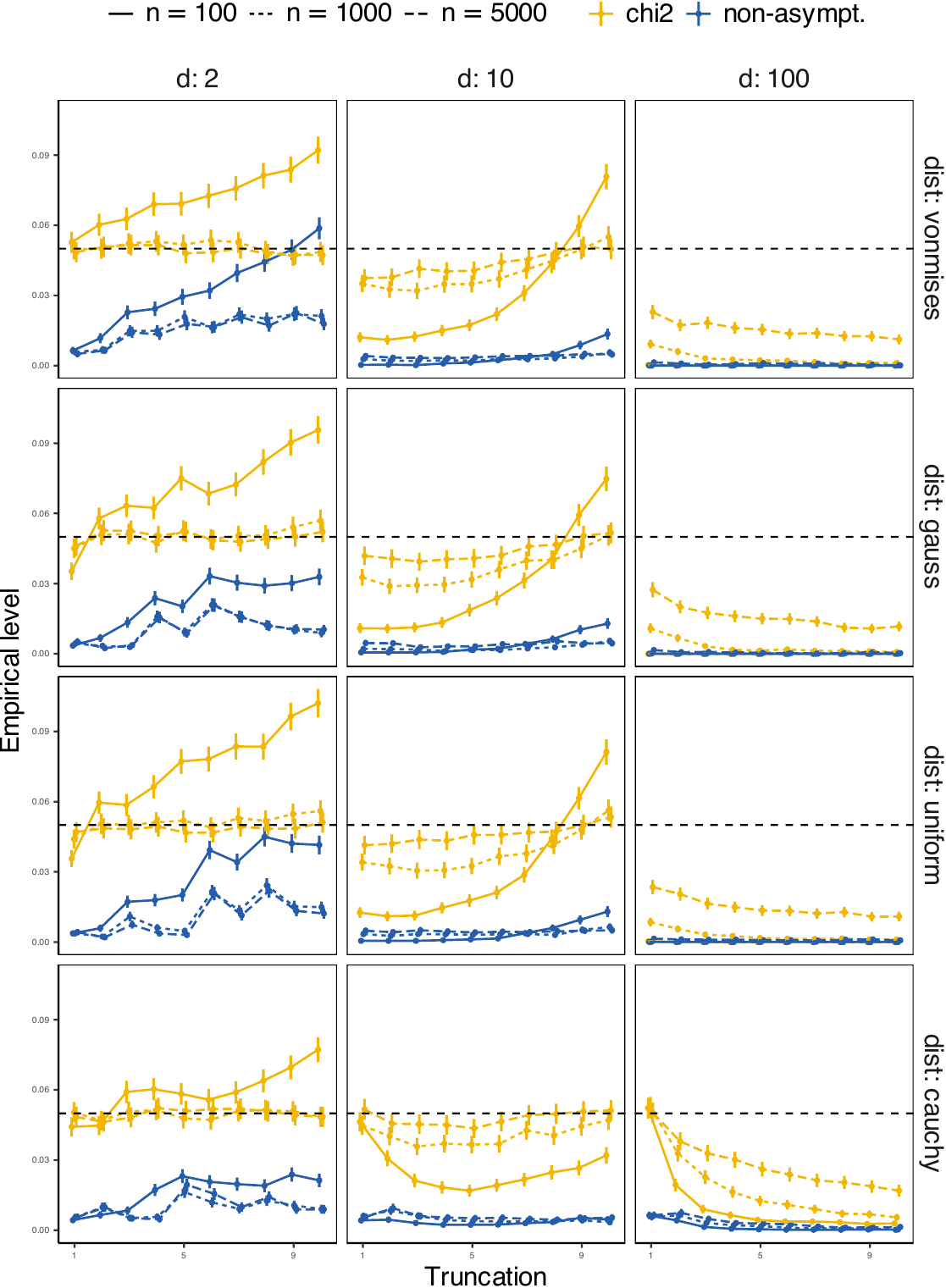}
		\caption{Average empirical level (and 95\% confidence interval) of the st-nMMD test, for varying truncations $T$, for the test based on the asymptotic $\chi^2$ approximation and our non-asymptotic bound. The test is performed at a nominal level $\alpha=0.05$ (black dashed horizontal line), for 4 different distributions and 3 different dimensions $d \in \{2, 10,100\}$. \label{Fig:level_simulations_T_005}}
	\end{center}
\end{figure}

\begin{figure}
	\begin{center}
		\includegraphics[scale=0.7]{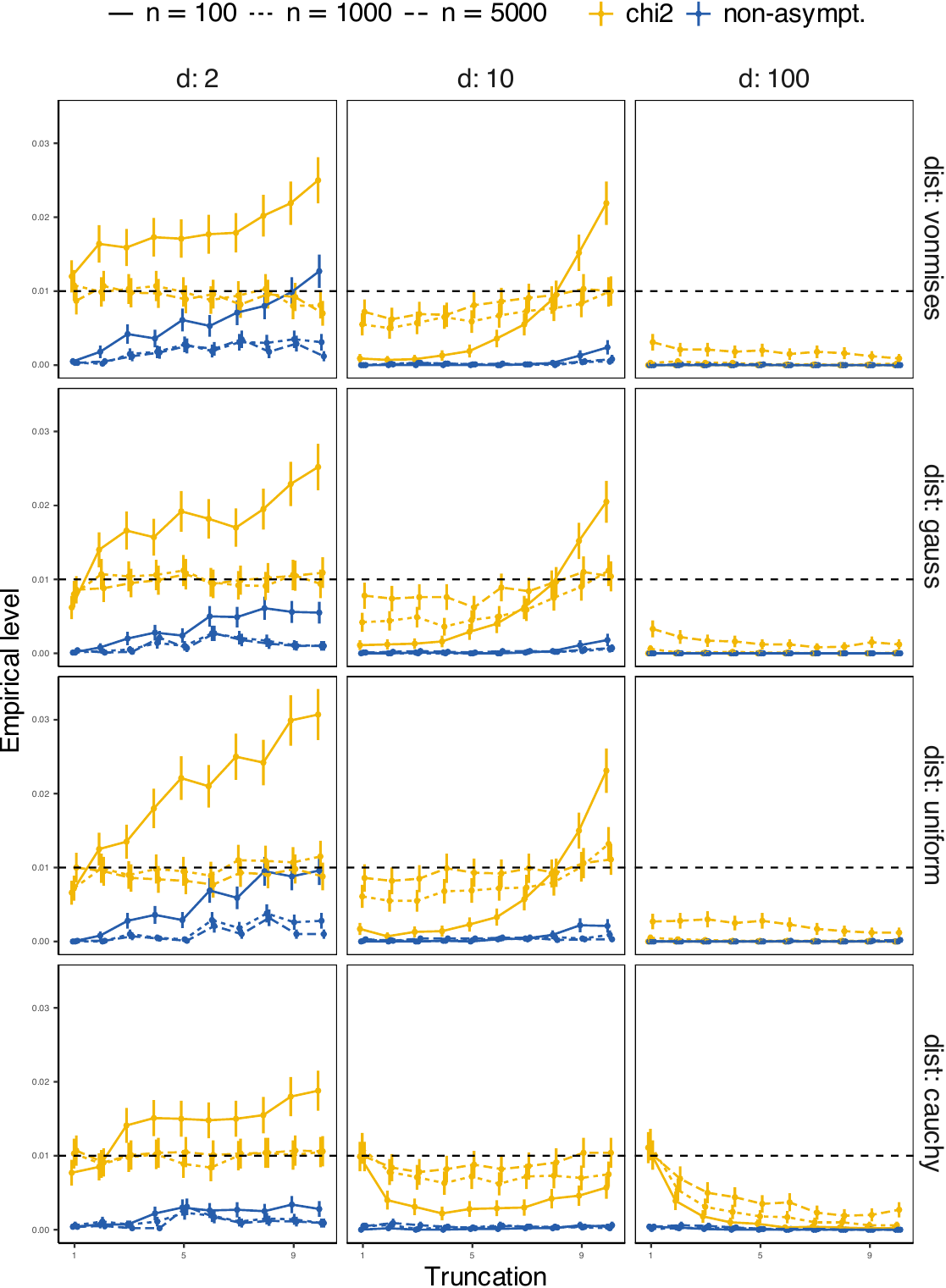}
		\caption{Average empirical level (and 95\% confidence interval) of the st-nMMD test, for varying truncations $T$, for the test based on the asymptotic $\chi^2$ approximation and our non-asymptotic bound. The test is performed at a nominal level $\alpha=0.01$ (black dashed horizontal line), for 4 different distributions and 3 different dimensions $d \in \{2, 10,100\}$. \label{Fig:level_simulations_T_001}}
	\end{center}
\end{figure}

\begin{figure}
	\begin{center}		
		\includegraphics[scale=0.50]{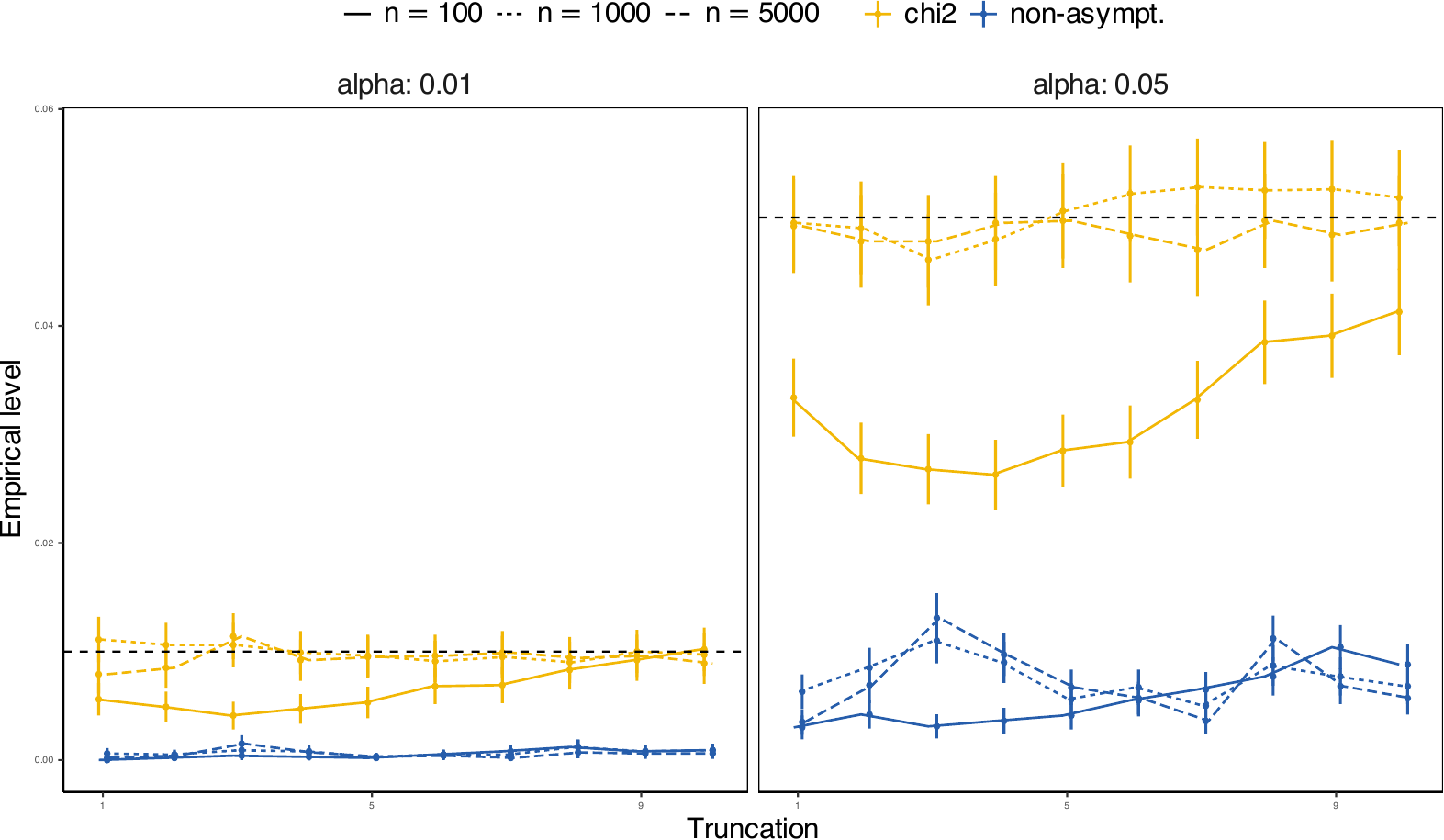}
		\caption{Average empirical level (and 95\% confidence interval) of the st-nMMD test, for varying truncations $T$, for the test based on the asymptotic $\chi^2$ approximation and our non-asymptotic bound. The test is performed at a nominal level $\alpha=0.01$ (left) and $\alpha=0.05$ (right) (black dashed horizontal line), for the MNIST datasets with dimension $d=49$. \label{Fig:level_mnist_T}}
	\end{center}
\end{figure}

\begin{figure}
	\begin{center}
		\includegraphics[scale=0.7]{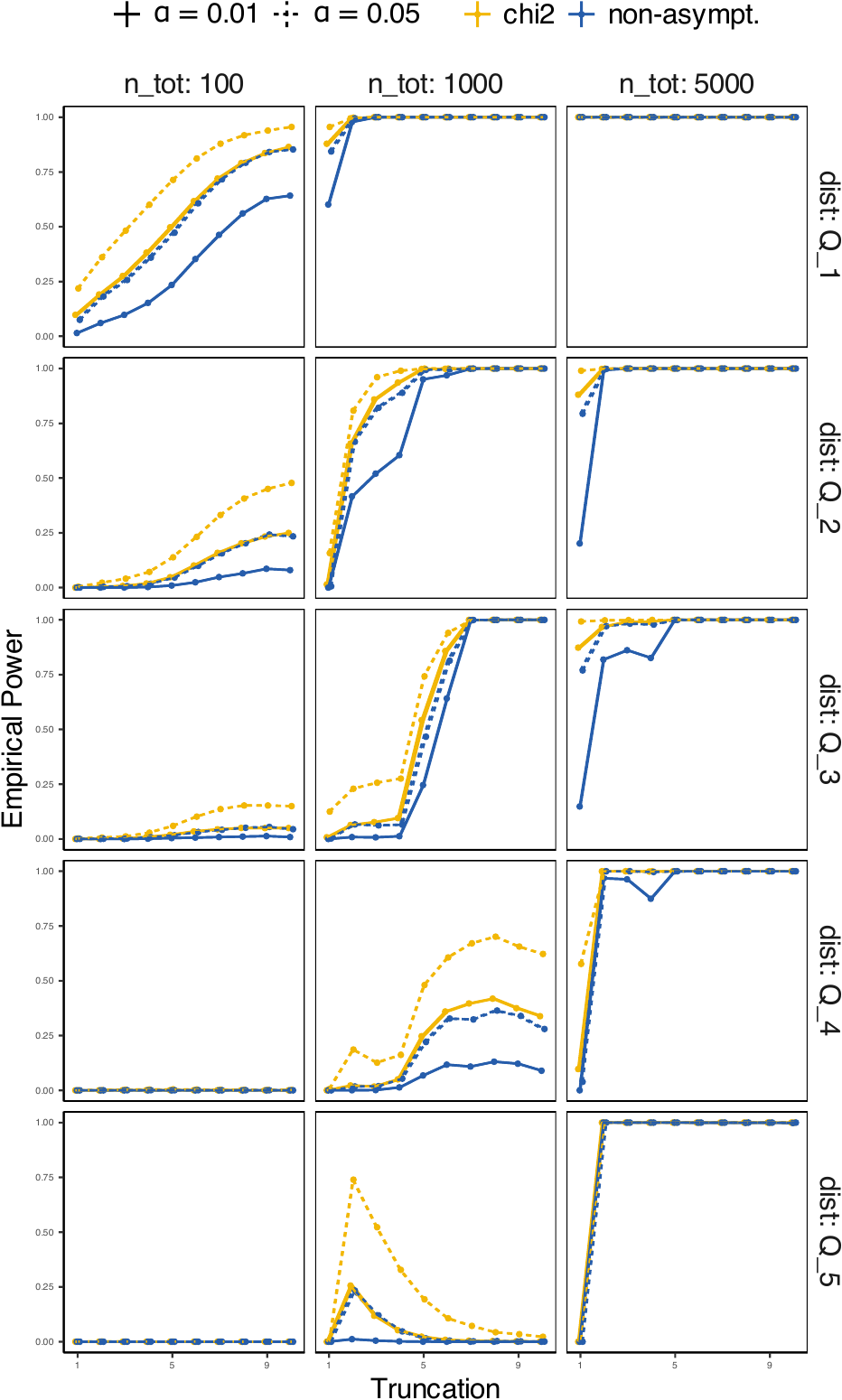}
	\end{center}
	\caption{Average empirical power (and 95\% confidence interval) of the st-nMMD test, for varying truncations $T$, for the test based on the asymptotic $\chi^2$ approximation and our non-asymptotic bound. The test is performed at a nominal level $\alpha=0.05$, for the MNIST datasets with dimension $d=49$. \label{Fig:power_mnist}} 
\end{figure}

\begin{figure}
	\begin{center}
		\includegraphics[scale=0.7]{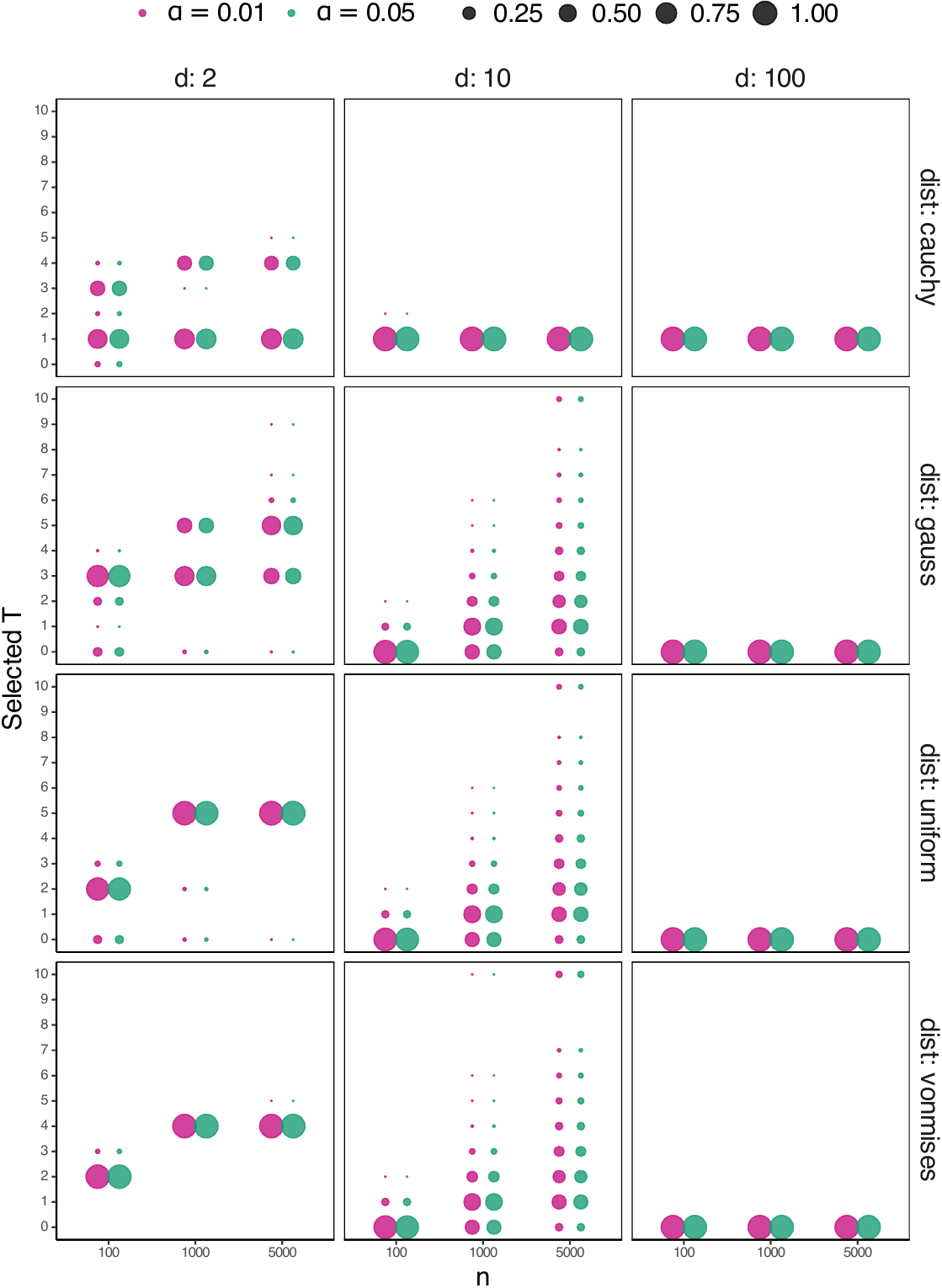}
	\end{center}
		\caption{Frequencies of the truncation parameter $T$ as proposed by our selection method on simulated distributions. The st-nMMD test is performed at a nominal level $\alpha=0.01$ (purple) or $\alpha=0.05$ (green). The sizes of the dots indicate the frequency of each value of $T$ among 10000 simulations. $T=0$ indicates that the null hypothesis is not rejected. \label{Fig:freq_simulations_T_values}}
\end{figure}

\begin{figure}
	\begin{center}
		\includegraphics[scale=0.4]{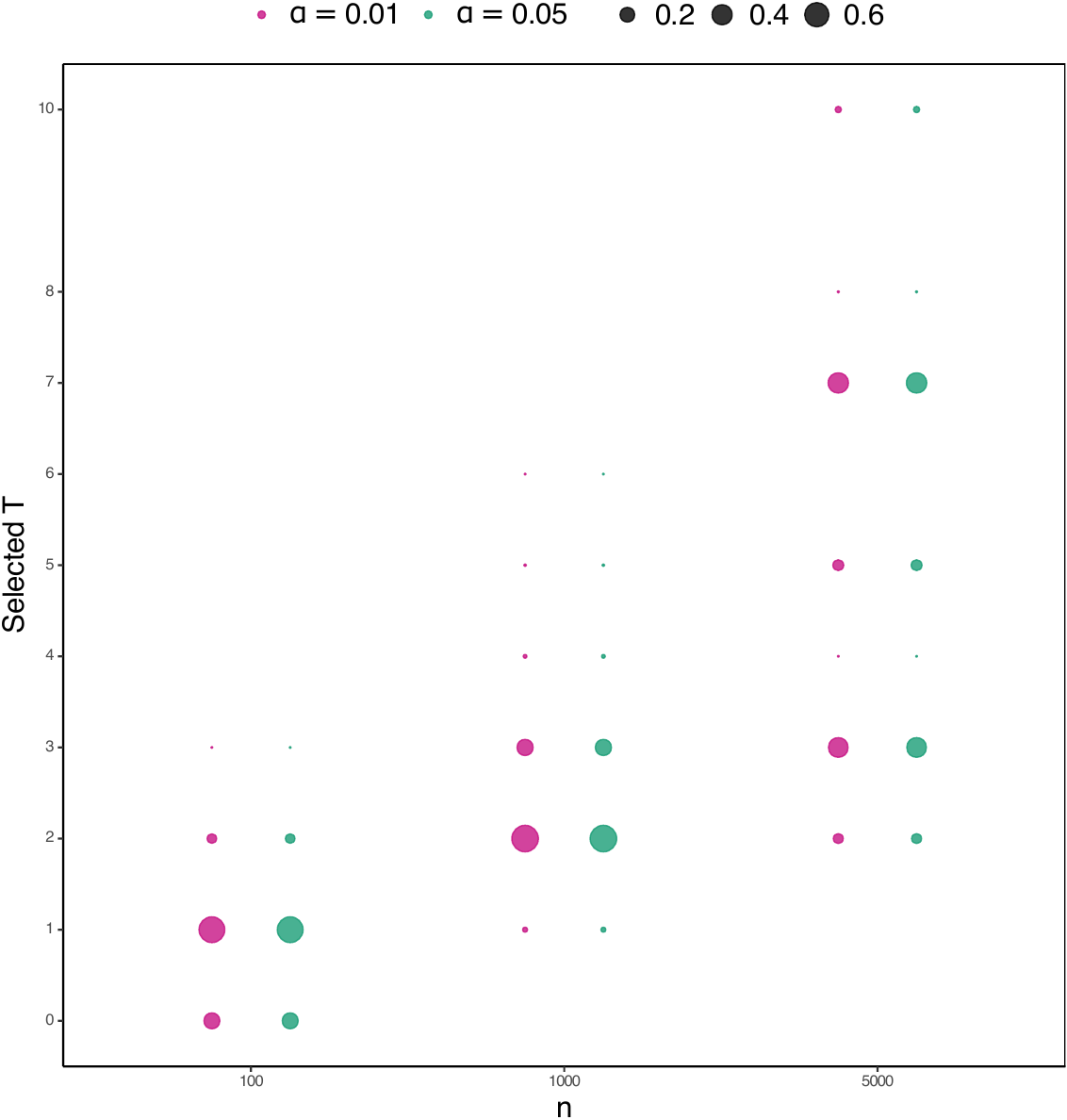}
	\end{center}
	\caption{Frequencies of the truncation parameter $T$ as proposed by our selection method on the MNIST dataset under the null. The st-nMMD test is performed at a nominal level $\alpha=0.01$ (purple) or $\alpha=0.05$ (green). The sizes of the dots indicate the frequency of each value of $T$ among 10000 simulations. $T=0$ indicates that the null hypothesis is not rejected. \label{Fig:freq_simulations_T_values_mnist_level}}
\end{figure}

\begin{figure}
	\begin{center}
		\includegraphics[scale=0.5]{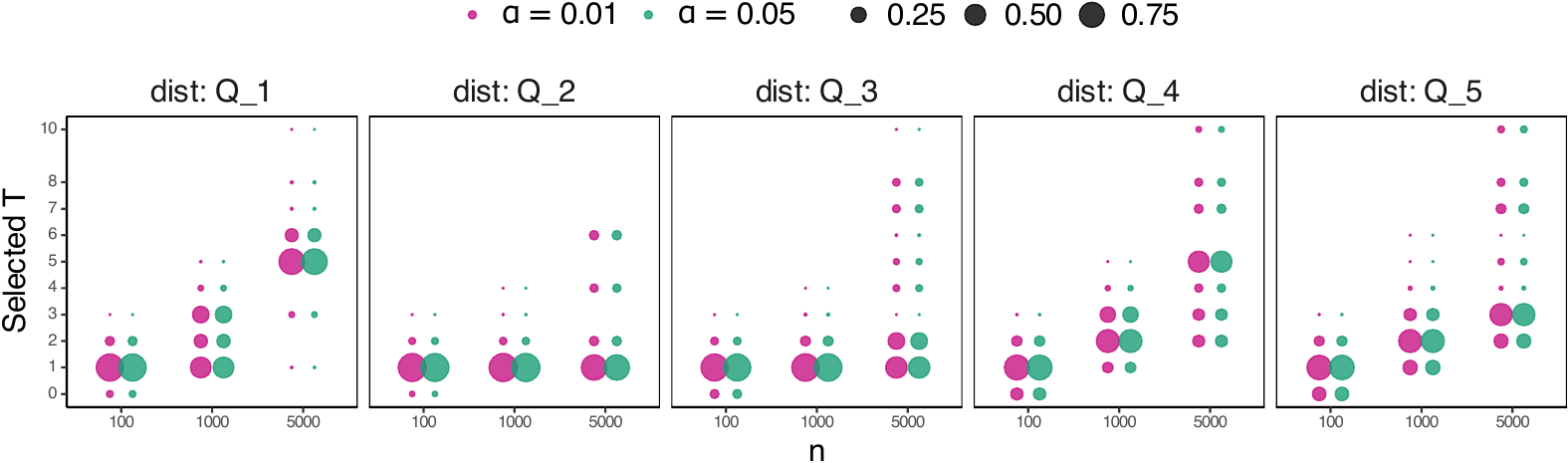}
	\end{center}
	\caption{Frequencies of the truncation parameter $T$ as proposed by our selection method on the MNIST dataset under the alternative. The st-nMMD test is performed at a nominal level $\alpha=0.01$ (purple) or $\alpha=0.05$ (green), for the MNIST datasets with dimension $d = 49$. The sizes of the dots indicate the frequency of each value of $T$ among 10000 simulations. $T=0$ indicates that the null hypothesis is not rejected. \label{Fig:freq_mnist_T_values}}
\end{figure}

\begin{figure}
	\begin{center}
		\includegraphics[scale=0.6]{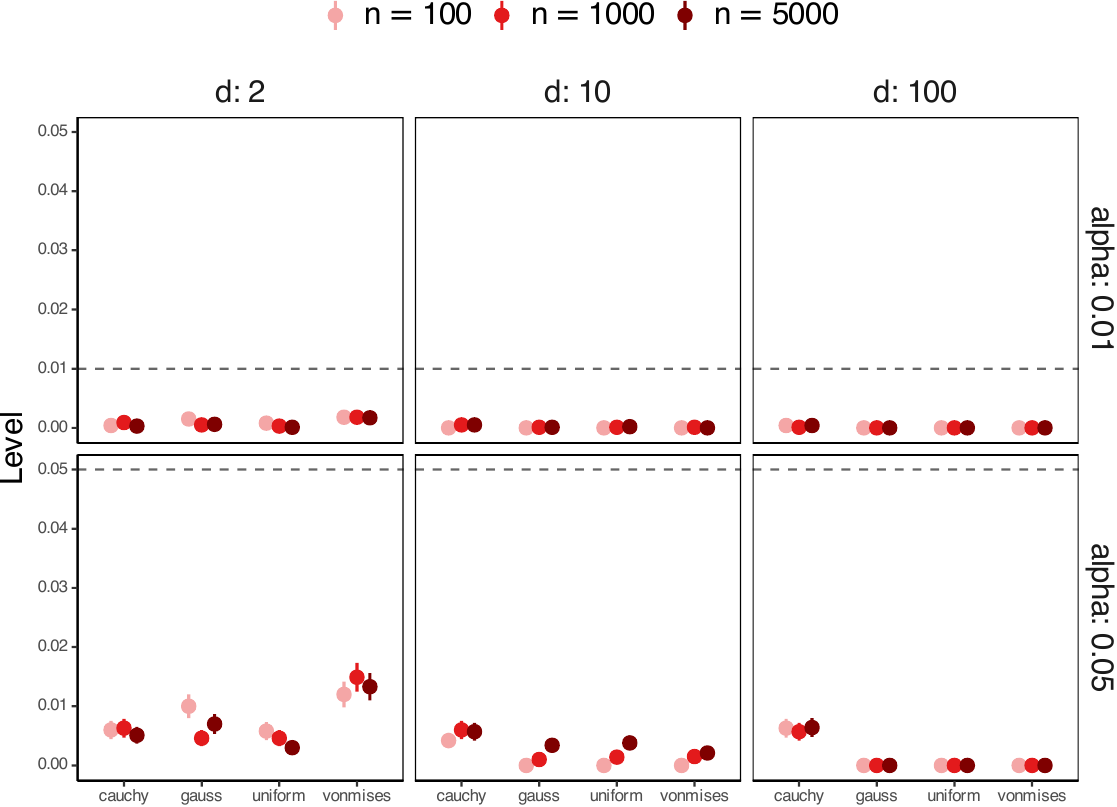}
	\end{center}
	\caption{Average empirical level (and 95\% confidence interval) of the st-nMMD test, for truncation parameter $T$ selected by our algorithm and for the st-nMMD test based on our non-asymptotic bound. The test is performed at a nominal level $\alpha=0.01$ (top) and $\alpha=0.05$ (bottom) (black dashed horizontal line), for 4 different distributions and 3 different dimensions $d \in \{2, 10,100\}$. \label{Fig:level_simulations_T_select}}
\end{figure}

\begin{figure}
	\begin{center}
		\includegraphics[scale=0.6]{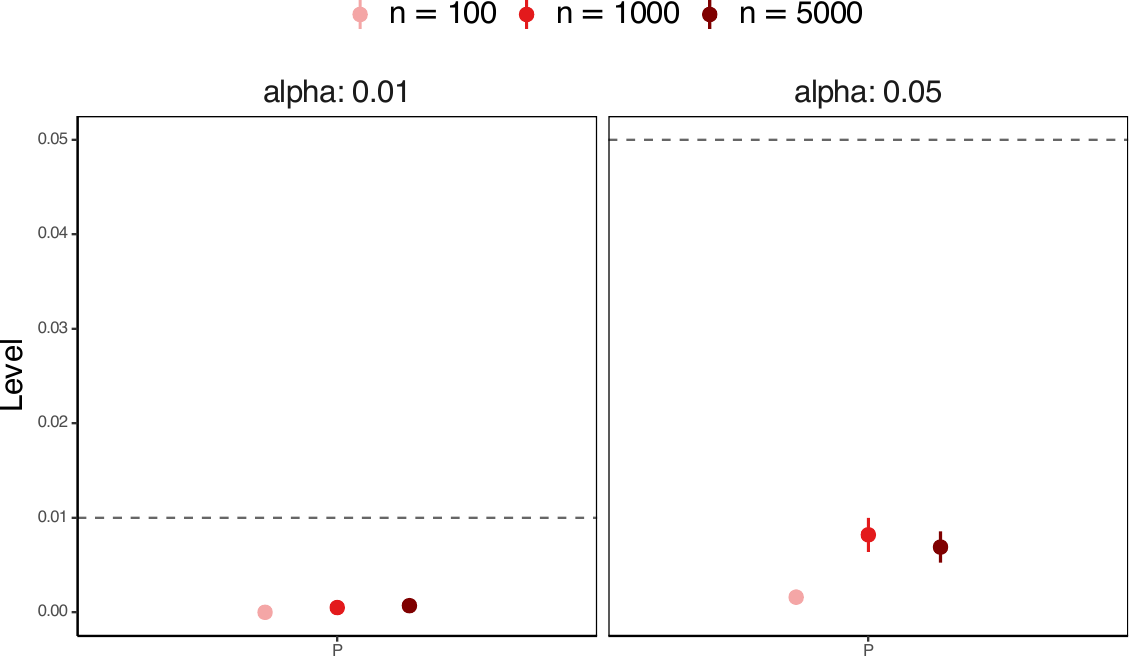}
	\end{center}
	\caption{Average empirical level (and 95\% confidence interval) of the st-nMMD test, for truncation parameter $T$ selected by our algorithm and for the st-nMMD test based on our non-asymptotic bound. The test is performed at a nominal level $\alpha=0.01$ (left) and $\alpha=0.05$ (right) (black dashed horizontal line), for the MNIST datasets under the null ($\mathbb{P}$ distribution) with dimension $d=49$. \label{Fig:level_mnist_T_values}}
\end{figure}

\begin{figure}
	\begin{center}
		\includegraphics[scale=0.6]{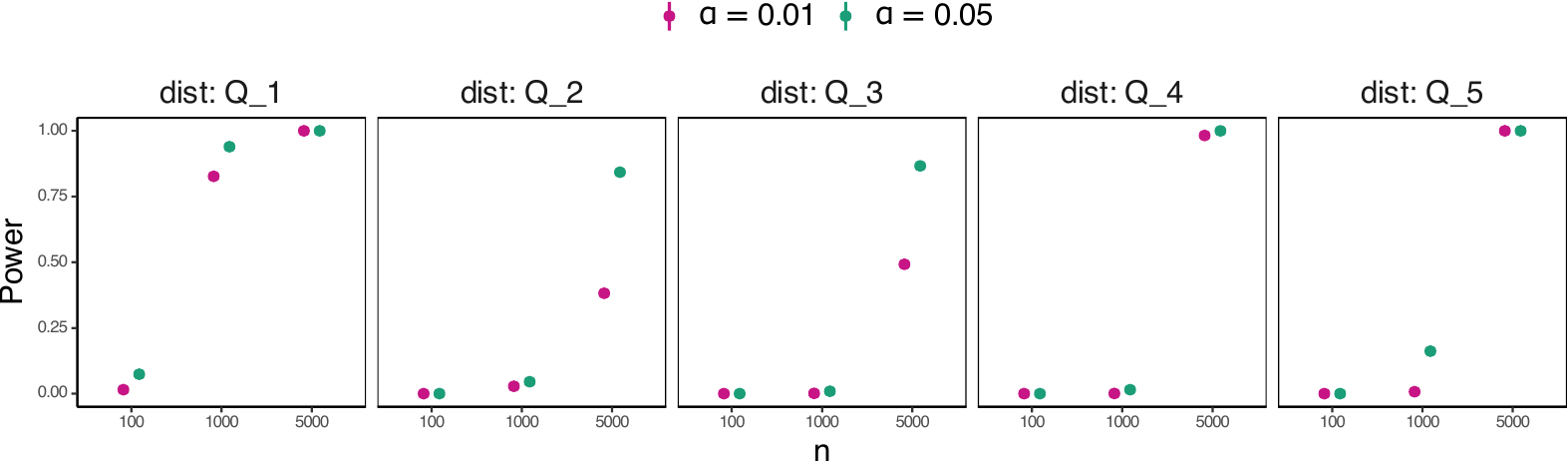}
	\end{center}
	\caption{Average empirical power (and 95\% confidence interval) of the st-nMMD test, for selected truncation parameter $T$. The test is performed at a nominal level $\alpha=0.01$ (purple) and $\alpha=0.05$ (green), for the MNIST datasets with dimension d = 49. \label{Fig:power_mnist_T_select}}
\end{figure}

% Manual newpage inserted to improve layout of sample file - not
% needed in general before appendices/bibliography.

\section*{\Large Appendix}

\appendix

This appendix is first devoted to all the theoretical proofs (Sections \ref{Appendice_Operators}$\sim$\ref{la_section_proof_asympt}), secondly to additional figures  (Section \ref{sec:additional_figures}) that complete Section \ref{sec:simu}, and thirdly to the existing results we rely on extensively (Section \ref{existing_results}).
%and thirdly to some details about the computation of $\widehat{D}_T^2$ in practice (Section \ref{section_details_computation}). 
%\perrine{à compléter si jamais on rajoute des sections, typiquement les variations de $\beta$ dans l'algo ou d'autres analyses complémentaires sur les données simulées ou MNIST}

\medskip

\medskip

For the following, we denote by $\Pi_{V}$ the orthogonal projector on $V$ a closed subspace of $\Hk$. In particular, for each $t \in \{1,\ldots,T\}$, $\Pi_{f_t}$ and $\Pi_{\widehat{f}_t}$ are the orthogonal projectors onto the subspaces spanned by the eigenfunctions $f_t$ and $\widehat{f}_t$ respectively. 
We recall that we assume $H_0$, so $\PP$ stands for $\PP_{H_0}$.
We recall the homoscedastic assumption : $\Sigma_{X} = \Sigma_{Y} = \Sigma_W$, also equals $\Sigma$ under $H_0$. \\
%Operators $\SW$, $\hS$, $\ST$ and $\hST$ are simply denoted by $\Sigma$, $\widehat{\Sigma}$, $\Sigma_T$ and $\widehat{\Sigma}_T$ respectively. \\
We introduce the feature map $\Phi : \mathcal{Z} \rightarrow \Hk$ such that $\forall (z,z') \in \mathcal{Z}^2$, $\phi(z) = k(z,\cdot)$ and $k(z, z') = \langle \phi(z), \phi(z')\rangle_{\Hk}$. For the following, we work with $\phi$ for simplification task but we recall that given $k(\cdot,\cdot)$ is sufficient to run the test statistic avoiding the explicit determination of $\phi$. 
We note that since $\Hk$ is separable, 
$\Sigma$ is a trace-class operator, and from \A{3}, operators $\Sigma_T^{-\frac{1}{2}}$ and $\widehat{\Sigma}_T^{-\frac{1}{2}}$ are self-adjoint nonnegative trace-class operators (with high probability for $\widehat{\Sigma}_T^{-\frac{1}{2}}$). 

\section{Background on operators}
\label{Appendice_Operators}

Let $(\Hk,\|\cdot\|_{\Hk})$ be a separable Hilbert space endowed with the norm $\|\cdot\|_{\Hk}$. 
An Hilbert-Schmidt operator $C : \Hk \rightarrow \Hk$ is a linear operator if 
$$\|C\|^2_{\HSk} \quad (= \underset{i=1}{\overset{+ \infty}{\sum}} \|Cf_i\|^2_{\Hk} \quad \text{for any orthonormal basis} \ (f_i)_{i \geq 1} \ \text{of} \ \Hk) \quad < + \infty.$$ 
Let $(\HSk,\|\cdot\|_{\HSk})$ be the separable Hilbert space of all Hilbert-Schmidt operators on $\Hk$ endowed with the inner product 
 $$\langle C, T \rangle_{\HSk} = \underset{i=1}{\overset{+ \infty}{\sum}} \langle Cf_i, T f_i\rangle_{\Hk},$$ 
for any orthonormal basis $(f_i)_{i \geq 1}$ of $\Hk$. 
A linear operator $C : \Hk \rightarrow \Hk$ is self-adjoint if $\langle C g, h \rangle_{\Hk} = \langle g, Ch \rangle_{\Hk}$ for any $(g,h) \in \Hk$. 
We recall that $C \in \HSk$ is compact and if $C$ is additionally a positive self-adjoint operator, its eigenvalues are all nonnegative and nonzero and the space of the eigenfunctions is an orthonormal basis of $\Hk$.
An operator $C \in \HSk$ is trace-class if $\underset{i=1}{\overset{+ \infty}{\sum}} |\langle C f_i, f_i \rangle_{\Hk} | < + \infty$ for any orthonormal basis $(f_i)_{i \geq 1}$. In this case, the trace of $C$ is 
$$ \text{Tr}(C) = \underset{i=1}{\overset{+ \infty}{\sum}} \langle C f_i, f_i \rangle_{\Hk},$$ 
and is independent of $(f_i)_{i \geq 1}$. 
A finite rank operator is trace-class, and a trace-class operator is Hilbert-Schmidt. \\
For $(f,g) \in \Hk \backslash \{0\}$, the rank one operator $f \otimes_{\Hk} g \in \HSk$ is defined by 
\begin{equation}
     f \otimes_{\Hk} g : h \in \Hk \mapsto  \langle g , h \rangle_{\Hk}  f \in \Hk,
    \label{otimes_equality}
\end{equation}
and satisfies
\begin{equation}
    \left \| f \otimes_{\Hk} f \right \|_{\HSk} =  \left \| f \right \|^2_{\Hk} \quad \text{and} \quad \left \| f \otimes_{\Hk} g \right \|_{\HSk} \leq  \left \| f \right \|_{\Hk} \left \| g \right \|_{\Hk}.
    \label{norm_inequality}
\end{equation}
We refer the reader to \citep{dunford1963spectral} for an Hilbert space theory reference.
%%%%%%%%%%%%%%%%%%%%%%
%%%%%%%%%%%%%%%%%%%%%%
\section{Proofs of the non-asymptotic results}\label{la_section_proof}
To improve the readability of the proofs, we simplify the notation $k(Z,\cdot)$ (or $k(z,\cdot)$) by $\Phi(Z)$ (or $\Phi(z)$) for $Z$ any random variable (or $z$ any data). 
%%%%%%%%%%%%%%%%%%%%%%
\subsection{Proof of Theorem \ref{the_theorem}}
In this section, we prove the main theorem of the article. 
Let us suppose that \A{1}$\sim$\A{3} and Conditions (\ref{A4bis}) and (\ref{lambda_t_condition}) are all satisfied, and let $\delta>0$. 

Thanks to the well-balanced assumption \A{1}, we can define $W_i := \phi(X_i) - \phi(Y_i)$ for $i \in \{1,\ldots,\nb\}$ such that 
    $$ \widehat{\mu}_X - \widehat{\mu}_Y = \frac{1}{\nb} \underset{i=1}{\overset{\nb}{\sum}} W_i.$$
%%%%%%
\subsubsection*{Reduction to control in the spectral eigendirections}   
Let $x >0$. From (\ref{Hotelling_test_stat}), we get 
    \begin{align}
    \PPO \left(\widehat{D}^2_T > x \right) &= \PPO \left(\frac{\nb}{2} \langle \widehat{\Sigma}_T^{-1} \left(\widehat{\mu}_X - \widehat{\mu}_Y\right), \widehat{\mu}_X - \widehat{\mu}_Y  \rangle_{\Hk}> x \right) \notag \\
    &=  \PPO \left( \frac{\nb}{2} \langle \underset{t=1}{\overset{T}{\sum}} \widehat{\lambda_t}^{-1} \langle \widehat{f}_t , \widehat{\mu}_X - \widehat{\mu}_Y \rangle_{\Hk}  \widehat{f}_t , \widehat{\mu}_X - \widehat{\mu}_Y  \rangle_{\Hk}> x \right) \notag \\    
    &= \PPO \left( \underset{t=1}{\overset{T}{\sum}} \frac{ \frac{\nb}{2} \langle \widehat{f}_t ,\widehat{\mu}_X - \widehat{\mu}_Y \rangle_{\Hk}^2}{\widehat{\lambda_t}}  > x \right). \notag 
    \end{align}
Therefore,
\begin{align}
 \PPO \left(\widehat{D}^2_T > x \right)    &\leq \underset{t=1}{\overset{T}{\sum}} \ \PPO \left( \frac{\frac{\nb}{2}\langle \widehat{f}_t ,\widehat{\mu}_X - \widehat{\mu}_Y \rangle_{\Hk}^2}{\widehat{\lambda_t}}  > \frac{x}{T} \right) \notag \\   
    &= \underset{t=1}{\overset{T}{\sum}} \ \PPO \left( \frac{ \frac{\nb}{2} \left(  \frac{1}{\nb} \underset{i=1}{\overset{\nb}{\sum}} \langle \widehat{f}_t , W_i \rangle_{\Hk} \right)^2}{\widehat{\lambda_t}}  > \frac{x}{T} \right),
    \label{until_Hoeffding}
\end{align}
where the first equality comes from the fact that $\widehat{\Sigma}_T^{-\frac{1}{2}}$ is a self-adjoint operator and the second equality follows from Equation~(\ref{otimes_equality}).
%%%%%%%%%%%
\subsubsection*{Reduction to a sum of self-normalized processes}  
Let us define the following events for all $t \in \{1,\ldots,T\}$ 
\begin{align*}
    E_1 &=  \left\{ \left \|  \widehat{\mu}_X - \widehat{\mu}_Y \right \|_{\Hk} \leq  2 \left( 2\sqrt{\frac{M_k}{\nb}} + \sqrt{\frac{M_k\delta}{\nb}} \right) \right\} \\
    E_2 &= \left\{ \left \| \widehat{\Sigma} - \Sigma \right \|_{\HSk} \leq \frac{6 M_k}{\sqrt{\nb}} \left( 1 + \sqrt{\frac{\delta}{2}}\right) \right\} \\
    E_{3,t} &= \left\{ \frac{1}{\nb} \underset{i=1}{\overset{\nb}{\sum}} \langle f_t, \widehat{\mu}_X - \phi(X_i) \rangle_{\Hk} \langle f_t,\phi(Y_i) - \widehat{\mu}_Y  \rangle_{\Hk} \leq 8 M_k \sqrt{\frac \delta \nb}   \right\} \\
    E_{4,t} &=  \left\{ \frac{1}{\nb} \underset{i=1}{\overset{\nb}{\sum}} \langle f_t, W_i \rangle_{\Hk}^2 \geq - 8 M_k \sqrt{\frac{\delta}{\nb}} + 2 \lambda_t \right\} \\
    E_5 &=  \left\{ \left\| \frac{1}{\nb} \underset{i=1}{\overset{\nb}{\sum}} W_i\right\|_{\Hk} \leq \sqrt{\frac{2 M_k}{\nb}} \left(1+\sqrt{2\delta}\right) \right\}.
\end{align*}
The next computations consist in decomposing the probabilities in~\eqref{until_Hoeffding} over the events $E_1$, $E_2$, $E_{3,t}$, $E_{4,t}$ and $E_5$ and their complementary events, for some fixed $t \in \{1, \dots T\}$. According to Theorem~\ref{Gretton} from Section \ref{existing_results}  and Corollary~\ref{Blanchard_for_SigmaW}, Lemma~\ref{covariance_term}, Lemma~\ref{num_den_term} and Lemma~\ref{lemma_norm_Z_bar} from Subsection \ref{Preliminary_results}, we have that the probabilities of   $E_1$, $E_2$, $E_{3,t}$, $E_{4,t}$ and $E_5$ are all less than $ e^{-\delta}$ or $2e^{-\delta}$ . Thus,
\begin{equation}
    \PPO \left(\widehat{D}^2_T > x \right)  
    \leq  \underset{t=1}{\overset{T}{\sum}} \ \PPO (F_t ) + 7 T e^{-\delta}.
    \label{splitting}  
\end{equation}
where 
$$F_t = \left\{ \frac{ \frac{\nb}{2} \left(  \frac{1}{\nb} \underset{i=1}{\overset{\nb}{\sum}} \langle \widehat{f}_t , W_i \rangle_{\Hk} \right)^2}{\widehat{\lambda_t}}  > \frac{x}{T}  \right\}\bigcap E_1 \bigcap E_2  \bigcap E_{3,t} \bigcap E_{4,t} \bigcap E_5.$$
In the following, note that each of the inequalities holds, since all denominators are strictly positive under assumption (\ref{lambda_t_condition}), which ensures the validity of Lemma \ref{HP_positivity}. 
Moreover, for ease of presentation, we upper bound each term $\frac{\nb-1}{\nb}$ and $\frac{\nb-1}{\nb^{\frac{3}{2}}}$ by $1$ and $\frac{1}{\sqrt{\nb}}$ respectively, but note that to get the sharpest non-asymptotic $x$ formula in (\ref{x_bound}), we keeped all the terms in $\nb$ until the end of the proof. 
Let $s = \sign(\langle f_t, \hat f_t\rangle)$. According to Lemma~\ref{negligeble_terms} together with the events $E_1$ and $E_{3,t}$, and by applying also Proposition~\ref{Zwald_Blanchard_Prop}, we get 
\begin{align*}
    \widehat{\lambda}_t  &\geq \frac{1}{2\nb} \underset{i=1}{\overset{\nb}{\sum}} \langle \widehat{f}_t, W_i\rangle_{\Hk}^2 - \frac{4 \left( 2\sqrt{\frac{M_k}{\nb}} + \sqrt{\frac{M_k\delta}{\nb}} \right)^2}{2}  - 8  M_k \left \| \Pi_{s f_t} - \Pi_{\widehat{f}_t} \right \|_{\HSk} - 8 M_k \sqrt{\frac \delta \nb}.
\end{align*}
Since $ \left \{ \left \| \widehat{\Sigma} - \Sigma \right \|_{\HSk} < \frac{\Delta_t}{2} \right \}$ under the gap condition (\ref{A4bis}), then according to  Corollary~\ref{P_V_theo}  we have 
\begin{align} 
 F_t   &   \subset     \left\{ \frac{ \frac{\nb}{2} \left(  \frac{1}{\nb} \underset{i=1}{\overset{\nb}{\sum}} \langle \widehat{f}_t , W_i \rangle_{\Hk} \right)^2}{\frac{1}{2\nb} \underset{i=1}{\overset{\nb}{\sum}} \langle \widehat{f}_t, W_i\rangle_{\Hk}^2 - 2\frac{M_k}{\nb} \left( 2 + \sqrt{\delta} \right)^2 - 16  M_k  \frac{\left \| \widehat{\Sigma} - \Sigma \right \|_{\HSk} }{\Delta_t}   - 8 M_k  \sqrt{\frac \delta \nb} }  > \frac{x}{T}  \right\}  \cap E_2  \cap E_{4,t}  \cap E_5  \notag   \\
    &  \subset      \left\{ \frac{ \frac{\nb}{2} \left(  \frac{1}{\nb} \underset{i=1}{\overset{\nb}{\sum}} \langle \widehat{f}_t , W_i \rangle_{\Hk} \right)^2}{\frac{1}{2\nb} \underset{i=1}{\overset{\nb}{\sum}} \langle \widehat{f}_t, W_i\rangle_{\Hk}^2 - 2 \frac{M_k}{\nb} \left( 2 + \sqrt{\delta} \right)^2  - 16   M_k \left(\frac{6 M_k \left( 1 + \sqrt{\frac{\delta}{2}}\right)}{\Delta_t \sqrt{\nb}} \right) - 8 M_k \sqrt{\frac \delta \nb} }  > \frac{x}{T}  \right\}    \cap E_2  \cap E_{4,t}  \cap E_5  \notag   \\
    &  \subset  \left\{ \frac{ \frac{\nb}{2} \left(  \frac{1}{\nb} \underset{i=1}{\overset{\nb}{\sum}} \langle \widehat{f}_t , W_i \rangle_{\Hk} \right)^2}{\frac{1}{2\nb} \underset{i=1}{\overset{\nb}{\sum}} \langle \widehat{f}_t, W_i\rangle_{\Hk}^2 - 2\frac{M_k}{\nb} \left( 2 + \sqrt{\delta} \right)^2  - \frac{96}{\Delta_t}  \frac{M_k^2}{\sqrt{\nb}} \left( 1 + \sqrt{\frac{\delta}{2}} \right) -   8 M_k \sqrt{\frac \delta \nb}}   > \frac{x}{T}  \right\}        \cap E_2  \cap E_{4,t}  \cap E_5   \label{until_Bertail_constant} ,
\end{align}
where the second inclusion follows because $E_2$ is satisfied in $F_t$. \\
For ease of presentation, we introduce for each $t \in \{1,\ldots,T\}$ the notation
$$R_{\nb} =  R_{\nb}\left(\delta,M_k,\Delta_t\right) := 
2\frac{M_k}{\nb} \left( 2 + \sqrt{\delta} \right)^2  + \frac{96}{\Delta_t}  \frac{M_k^2}{\sqrt{\nb}} \left( 1 + \sqrt{\frac{\delta}{2}} \right) +   8 M_k \sqrt{\frac \delta \nb}.
$$
Note that on $  E_2    \cap E_5 $ we have 
\begin{align}
\frac{1}{\nb} \underset{i=1}{\overset{\nb}{\sum}}  \langle \widehat{f}_t , W_i \rangle_{\Hk}^2 & = 
\frac{1}{\nb} \underset{i=1}{\overset{\nb}{\sum}} \langle s f_t , W_i \rangle_{\Hk}^2 - \frac{2}{\nb} \underset{i=1}{\overset{\nb}{\sum}} \left( \langle s f_t - \widehat{f}_t, W_i \rangle_{\Hk} \langle s f_t, W_i \rangle_{\Hk} \right)  \notag \\
& \geq \frac{1}{\nb} \underset{i=1}{\overset{\nb}{\sum}} \langle f_t , W_i \rangle_{\Hk}^2 -  2 \left | \langle s f_t - \widehat{f}_t, \frac{1}{\nb} \underset{i=1}{\overset{\nb}{\sum}} W_i \rangle_{\Hk} \right | \left \| s 	f_t \right \|_{\Hk}  \underset{ 1\leq i \leq \nb}{\max} \left \| W_i \right \|_{\Hk} \notag  \\
 & \geq  \frac{1}{\nb} \underset{i=1}{\overset{\nb}{\sum}} \langle f_t , W_i \rangle_{\Hk}^2  -  2 \left \| s f_t - \widehat{f}_t \right \|_{\Hk} \sqrt{\frac{2 M_k}{\nb}} \left(1+\sqrt{2\delta}\right) 2 \sqrt{M_k} \notag  \\
 & \geq \frac{1}{\nb} \underset{i=1}{\overset{\nb}{\sum}} \langle f_t , W_i \rangle_{\Hk}^2 -  \frac{4 \sqrt{2} M_k \left(1 +\sqrt{2\delta} \right)}{\sqrt{\nb}} \left \| \Pi_{s f_t} - \Pi_{\widehat{f}_t} \right \|_{\HSk} \notag  \\
  & \geq \frac{1}{\nb} \underset{i=1}{\overset{\nb}{\sum}} \langle f_t , W_i \rangle_{\Hk}^2 -  \frac{8 \sqrt{2} M_k \left(1 +\sqrt{2\delta} \right)}{\sqrt{\nb} \Delta_t} \left \| \widehat{\Sigma} - \Sigma \right \|_{\HSk},  \label{minorangle}
\end{align}
providing from Lemma \ref{kernel_bound}, Proposition~\ref{Zwald_Blanchard_Prop} and Corollary \ref{P_V_theo}, since the event $E_2$ is included in the event $ \left \{ \left \| \widehat{\Sigma} - \Sigma \right \|_{\HSk} < \frac{\Delta_t}{2} \right \}$ from  the gap condition (\ref{A4bis}). We then derive from \eqref{until_Bertail_constant}, \eqref{minorangle} and $E_2$ that we also have on $F_t$
$$
  \frac{\nb \left(  \frac{1}{\nb} \underset{i=1}{\overset{\nb}{\sum}} \langle \widehat{f}_t , W_i \rangle_{\Hk} \right)^2}{\frac{1}{\nb} \underset{i=1}{\overset{\nb}{\sum}} \langle f_t , W_i \rangle_{\Hk}^2 - \frac{48 \sqrt{2} M_k^2 \left(1+ \sqrt{2\delta}\right) \left(1+ \sqrt{\frac{\delta}{2}}\right)}{\nb \Delta_t} - 2 R_{\nb}}  > \frac{x}{T}  ,
  $$
so
  $$ \left |  \frac{1}{\nb} \underset{i=1}{\overset{\nb}{\sum}} \langle \widehat{f}_t , W_i \rangle_{\Hk} \right |  
  > \sqrt{\frac{x}{\nb T} \left( \frac{1}{\nb} \underset{i=1}{\overset{\nb}{\sum}} \langle f_t , W_i \rangle_{\Hk}^2  - \frac{48 \sqrt{2} M_k^2 \left(1+ \sqrt{2\delta}\right) \left(1+ \sqrt{\frac{\delta}{2}}\right)}{\nb \Delta_t} - 2 R_{\nb} \right)}    ,
  $$
 and 
\begin{multline*}   
\left |  \frac{1}{\nb} \underset{i=1}{\overset{\nb}{\sum}} \langle s f_t , W_i \rangle_{\Hk} \right |  +  \left \| \widehat{f}_t - s f_t \right \|_{\Hk} \left \| \frac{1}{\nb} \underset{i=1}{\overset{\nb}{\sum}} W_i \right \|_{\Hk}   \\
>  \sqrt{\frac{x}{\nb T} \left( \frac{1}{\nb} \underset{i=1}{\overset{\nb}{\sum}} \langle f_t , W_i \rangle_{\Hk}^2  - \frac{48 \sqrt{2} M_k^2 \left(1+ \sqrt{2\delta}\right) \left(1+ \sqrt{\frac{\delta}{2}}\right)}{\nb \Delta_t} - 2 R_{\nb} \right)} .
\end{multline*}  
Using again Proposition~\ref{Zwald_Blanchard_Prop} and Corollary \ref{P_V_theo} and the events $E_2$ and $E_5$, we obtain that on $F_t$ 
\begin{multline*}   
  \left |  \frac{1}{\nb} \underset{i=1}{\overset{\nb}{\sum}} \langle f_t , W_i \rangle_{\Hk} \right |  +  \frac{12 \sqrt{2} M_k^{\frac{3}{2}} \left(1+ \sqrt{2\delta}\right) \left(1+ \sqrt{\frac{\delta}{2}}\right)}{\nb \Delta_t}  \\
    >    \sqrt{\frac{x}{\nb T} \left( \frac{1}{\nb} \underset{i=1}{\overset{\nb}{\sum}} \langle f_t , W_i \rangle_{\Hk}^2  - \frac{48 \sqrt{2} M_k^2 \left(1+ \sqrt{2\delta}\right) \left(1+ \sqrt{\frac{\delta}{2}}\right)}{\nb \Delta_t} - 2 R_{\nb} \right)}.
\end{multline*}  
We can now rewrite this last equation, which is satisfied on $F_t$, in term of an auto-normalized process 
\begin{multline*} 
  \frac{\left |  \frac{1}{\nb} \underset{i=1}{\overset{\nb}{\sum}} \langle f_t , W_i \rangle_{\Hk} \right |}{\sqrt{\frac{1}{\nb} \underset{i=1}{\overset{\nb}{\sum}} \langle f_t , W_i \rangle_{\Hk}^2}}  >  
 \sqrt{\frac{x}{\nb T}} \sqrt{1 - \frac{\frac{48 \sqrt{2} M_k^2 \left(1+ \sqrt{2\delta}\right) \left(1+ \sqrt{\frac{\delta}{2}}\right)}{\nb \Delta_t} + 2 R_{\nb} }{\frac{1}{\nb} \underset{i=1}{\overset{\nb}{\sum}} \langle f_t , W_i \rangle_{\Hk}^2}} -  \frac{12 \sqrt{2} M_k^{\frac{3}{2}} \left(1+ \sqrt{2\delta}\right) \left(1+ \sqrt{\frac{\delta}{2}}\right)}{\nb \Delta_t \sqrt{\frac{1}{\nb} \underset{i=1}{\overset{\nb}{\sum}} \langle f_t , W_i \rangle_{\Hk}^2}}.
\end{multline*} 
Since $E_{4,t}$ is satisfied on $F_t$, we thus have on $F_t$ that
\begin{multline*}   
  \frac{\left |  \frac{1}{\nb} \underset{i=1}{\overset{\nb}{\sum}} \langle f_t , W_i \rangle_{\Hk} \right |}{\sqrt{\frac{1}{\nb} \underset{i=1}{\overset{\nb}{\sum}} \langle f_t , W_i \rangle_{\Hk}^2}}  
>  \sqrt{\frac{x}{\nb T}} \sqrt{1 - \frac{48 \sqrt{2} M_k^2 \left(1+ \sqrt{2\delta}\right) \left(1+ \sqrt{\frac{\delta}{2}}\right) + 2 \nb \Delta_t R_{\nb} }{\nb \Delta_t \left(- 8 M_k \sqrt{\frac{\delta}{\nb}} + 2 \lambda_t\right)}}  \\
-  \frac{12 \sqrt{2} M_k^{\frac{3}{2}} \left(1+ \sqrt{2\delta}\right) \left(1+ \sqrt{\frac{\delta}{2}}\right)}{\nb \Delta_t \sqrt{- 8 M_k \sqrt{\frac{\delta}{\nb}} + 2 \lambda_t}}  .
 \end{multline*}
 As $W_i$ has a symmetric distribution, it gives that
\begin{multline}  
  \PPO (F_t)   \leq  2\PPO \Bigg( \frac{  \frac{1}{\nb} \underset{i=1}{\overset{\nb}{\sum}} \langle f_t , W_i \rangle_{\Hk}}{\sqrt{\frac{1}{\nb} \underset{i=1}{\overset{\nb}{\sum}} \langle f_t , W_i \rangle_{\Hk}^2}}  
     >   \sqrt{\frac{x}{\nb T}} \sqrt{1 - \frac{48 \sqrt{2} M_k^2 \left(1+ \sqrt{2\delta}\right) \left(1+ \sqrt{\frac{\delta}{2}}\right) + 2 \nb \Delta_t R_{\nb} }{\nb \Delta_t \left(- 8 M_k \sqrt{\frac{\delta}{\nb}} + 2 \lambda_t\right)}} \\
     - \frac{12 \sqrt{2} M_k^{\frac{3}{2}} \left(1+ \sqrt{2\delta}\right) \left(1+ \sqrt{\frac{\delta}{2}}\right)}{\nb \Delta_t \sqrt{- 8 M_k \sqrt{\frac{\delta}{\nb}} + 2 \lambda_t}} \Bigg).
    \label{denominator_bound} 
  \end{multline}
%%%%%%%%%%%
\subsubsection*{Hoeffing's inequality for self-normalized processes} 
The left term into the probability in (\ref{denominator_bound}) is now exactly a sum of self-normalized processes and the rest of the proof is inspired by the work of~\citep{bertail2008exponential}. 
Now, we consider $\nb$ Rademacher random variables $\left(\sigma_i\right)_{i \in \{1,\ldots,\nb\}}$ independent from $\left(W_i\right)_{i \in \{1,\ldots,\nb\}}$.
We define $\sigma_{\nb}(W) := \frac{1}{\nb} \underset{i=1}{\overset{\nb}{\sum}} \sigma_{i} W_i$. 
In particular, $\PP(\sigma_i = 1) =  \PP(\sigma_i = -1) = \frac{1}{2}, \ \PP(\sigma_i^2 = 1) = 1, \ \sigma_{i} W_i \overset{\mathcal{L}}{=} W_i$ and  $\sigma_{\nb}(W) \overset{\mathcal{L}}{=} \frac{1}{\nb} \underset{i=1}{\overset{\nb}{\sum}} W_i$ from independence and symmetry of the $W_i$'s and independence with the $\sigma_i$'s.  

Restarting from (\ref{denominator_bound}), we find that 
\begin{align}
 \PPO (F_t) & \leq 
   2 \PPO \Bigg( \frac{ \frac{1}{\nb} \underset{i=1}{\overset{\nb}{\sum}} \langle f_t ,W_i \rangle_{\Hk} \sigma_i}{\sqrt{\frac{1}{\nb} \underset{i=1}{\overset{\nb}{\sum}} \langle f_t , W_i \rangle_{\Hk}^2}}  \notag >   \sqrt{\frac{x}{\nb T}} \sqrt{1 - \frac{48 \sqrt{2} M_k^2 \left(1+ \sqrt{2\delta}\right) \left(1+ \sqrt{\frac{\delta}{2}}\right) + 2 \nb \Delta_t R_{\nb} }{\nb \Delta_t \left(- 8 M_k \sqrt{\frac{\delta}{\nb}} + 2 \lambda_t\right)}} \\
    & \hspace{2 cm}  - \frac{12 \sqrt{2} M_k^{\frac{3}{2}} \left(1+ \sqrt{2\delta}\right) \left(1+ \sqrt{\frac{\delta}{2}}\right)}{\nb \Delta_t \sqrt{- 8 M_k \sqrt{\frac{\delta}{\nb}} + 2 \lambda_t}} \Bigg)  \notag \\
    &   =  2 \mathbb{E}_{W} \Bigg[ \PPO \Bigg(\underset{i=1}{\overset{\nb}{\sum}} \frac{\frac{1}{\nb} \langle f_t ,W_i \rangle_{\Hk}}{\sqrt{\frac{1}{\nb} \underset{i=1}{\overset{\nb}{\sum}} \langle f_t , W_i \rangle_{\Hk}^2}} \sigma_i      >
     \sqrt{\frac{x}{\nb T}} \sqrt{1 - \frac{48 \sqrt{2} M_k^2 \left(1+ \sqrt{2\delta}\right) \left(1+ \sqrt{\frac{\delta}{2}}\right) + 2 \nb \Delta_t R_{\nb}}{\nb \Delta_t \left(- 8 M_k \sqrt{\frac{\delta}{\nb}} + 2 \lambda_t\right)}} \notag \\
    & \hspace{2 cm} 
    - \frac{12 \sqrt{2} M_k^{\frac{3}{2}} \left(1+ \sqrt{2\delta}\right) \left(1+ \sqrt{\frac{\delta}{2}}\right)}{\nb \Delta_t \sqrt{- 8 M_k \sqrt{\frac{\delta}{\nb}} + 2 \lambda_t}} \vert \left(W_i\right)_{i \in \{1,\ldots,n\}} \Bigg) \Bigg].
    \label{just_before_Hoeffding}
\end{align}

The final step of the proof is to apply the Hoeffding's inequality (Theorem \ref{Hoeffding_inequality}) to each unidimensional self-normalized term $S_i = c_i \sigma_i \vert \left(W_i\right)_{i \in \{1,\ldots,\nb\}}$ for all $i \in \{1,\ldots,\nb\}$ where $c_i :=  \text{sign}(\langle f_t , W_i \rangle_{\Hk}) \times \frac{\frac{1}{\nb} \langle f_t , W_i \rangle_{\Hk}}{\sqrt{\frac{1}{\nb} \underset{i=1}{\overset{\nb}{\sum}} \langle f_t , W_i \rangle_{\Hk}^2}}$, a constant conditionally to $\left(W_i\right)_{i \in \{1,\ldots,\nb\}}$. The Hoeffding's inequality is valid here since (i) the $S_i$'s are independent and centered from properties on the $\sigma_i$'s, (ii) $\forall i \in \{1,\ldots,\nb\}, \ a_i \leq S_i \leq b_i$ with $a_i =-c_i$ and $b_i =c_i$ and (iii) the term on the right in the probability in (\ref{just_before_Hoeffding}) is positive from Lemma \ref{condition_Hoeffding}. \\
So, by applying Hoeffding's Inequality on (\ref{just_before_Hoeffding}), it gives that $\PPO (F_t) \leq 2 E_W \left [e^{- (\star)} \right]$ where 
\begin{eqnarray*} 
  (\star)   &=&   \frac{2 \left(\sqrt{\frac{x}{\nb T}} \sqrt{1 - \frac{48 \sqrt{2} M_k^2 \left(1+ \sqrt{2\delta}\right) \left(1+ \sqrt{\frac{\delta}{2}}\right) + 2 \nb \Delta_t R_{\nb}}{\nb \Delta_t \left(- 8 M_k \sqrt{\frac{\delta}{\nb}} + 2 \lambda_t\right)}} - \frac{12 \sqrt{2} M_k^{\frac{3}{2}} \left(1+ \sqrt{2\delta}\right) \left(1+ \sqrt{\frac{\delta}{2}}\right)}{\nb \Delta_t \sqrt{- 8 M_k \sqrt{\frac{\delta}{\nb}} + 2 \lambda_t}} \right)^2}{4 \underset{i=1}{\overset{\nb}{\sum}} \left(\frac{\frac{1}{\nb} \langle f_t , W_i \rangle_{\Hk}}{\sqrt{\frac{1}{\nb} \underset{i=1}{\overset{\nb}{\sum}} \langle f_t , W_i \rangle_{\Hk}^2}}\right)^2} \\
   &= & \frac{\nb}{2} \left(\sqrt{\frac{x}{\nb T}} \sqrt{1 - \frac{48 \sqrt{2} M_k^2 \left(1+ \sqrt{2\delta}\right) \left(1+ \sqrt{\frac{\delta}{2}}\right) + 2 \nb \Delta_t R_{\nb}}{\nb \Delta_t \left(- 8 M_k \sqrt{\frac{\delta}{\nb}} + 2 \lambda_t\right)}} - \frac{12 \sqrt{2} M_k^{\frac{3}{2}} \left(1+ \sqrt{2\delta}\right) \left(1+ \sqrt{\frac{\delta}{2}}\right)}{\nb \Delta_t \sqrt{- 8 M_k \sqrt{\frac{\delta}{\nb}} +2  \lambda_t}} \right)^2 \\
   &= &   \left(\sqrt{\frac{x}{2T}} \sqrt{\frac{2 \lambda_t - \left( 8 M_k \sqrt{\frac{\delta}{\nb}} + \frac{48 \sqrt{2} M_k^2 \left(1+ \sqrt{2\delta}\right) \left(1+ \sqrt{\frac{\delta}{2}}\right)}{\nb \Delta_t} + 2 R_{\nb}  \right)}{2 \lambda_t - 8 M_k \sqrt{\frac{\delta}{\nb}}}} - \frac{\frac{12 M_k^{\frac{3}{2}}}{\sqrt{\nb}} \left(1+ \sqrt{2\delta}\right) \left(1+ \sqrt{\frac{\delta}{2}}\right)}{\Delta_t \sqrt{2 \lambda_t - 8 M_k \sqrt{\frac{\delta}{\nb}}}}\right)^2,
\end{eqnarray*}  
where we recall that the term in the square of the exponential is strictly positive under assumption (\ref{lambda_t_condition}), which ensures the validity of Lemma \ref{HP_positivity}. 
According to \eqref{splitting}, it gives
\begin{align}
    &\PPO \left(\widehat{D}^2_T > x \right) \notag \\
    &\leq \underset{t=1}{\overset{T}{\sum}} \ 2 e^{-\left( \underset{t=1,\ldots,T}{\min} \Bigg\{ \sqrt{\frac{x}{2T}} \sqrt{\frac{2\lambda_t - \left( 8 M_k \sqrt{\frac{\delta}{\nb}} + \frac{48 \sqrt{2} M_k^2 \left(1+ \sqrt{2\delta}\right) \left(1+ \sqrt{\frac{\delta}{2}}\right)}{\nb \Delta_t} + 2 R_{\nb}  \right)}{2\lambda_t - 8 M_k \sqrt{\frac{\delta}{\nb}}}} - \frac{\frac{12 M_k^{\frac{3}{2}}}{\sqrt{\nb}} \left(1+ \sqrt{2\delta}\right) \left(1+ \sqrt{\frac{\delta}{2}}\right)}{\underset{t=\{1,\ldots,T\}}{\min} \left\{\Delta_t \sqrt{2\lambda_t - 8 M_k \sqrt{\frac{\delta}{\nb}}} \right\}} \Bigg\} \right)^2} + 7 T e^{-\delta} \notag \\
    & = \underset{t=1}{\overset{T}{\sum}} \ 2 e^{-\left( \sqrt{\frac{x}{2T}} \underset{t=1,\ldots,T}{\min} \Bigg\{ \sqrt{\frac{2\lambda_t - \left( 8 M_k \sqrt{\frac{\delta}{\nb}} + \frac{48 \sqrt{2} M_k^2 \left(1+ \sqrt{2\delta}\right) \left(1+ \sqrt{\frac{\delta}{2}}\right)}{\nb \Delta_t} + 2 R_{\nb} \right)}{2\lambda_t - 8 M_k \sqrt{\frac{\delta}{\nb}}}}\Bigg\} - \frac{\frac{12 M_k^{\frac{3}{2}}}{\sqrt{\nb}} \left(1+ \sqrt{2\delta}\right) \left(1+ \sqrt{\frac{\delta}{2}}\right)}{\underset{t=\{1,\ldots,T\}}{\min} \left\{\Delta_t \sqrt{2\lambda_t - 8 M_k \sqrt{\frac{\delta}{\nb}}} \right\}} \right)^2} + 7 T e^{-\delta}.\notag 
\end{align}
Take $x$ according to (\ref{x_bound}) as in the Statement of the Theorem
\begin{eqnarray*}
x &= &    \qmaj (\nb, \delta,M_k,\lambda_{1:T},\Delta_{1:T})   \\
         &= &     2 T \underset{t=1,\ldots,T}{\max}  \left(\frac{\lambda_t - 4 M_k \sqrt{\frac{\delta}{\nb}}}{\lambda_t - K_{1,t}\left(\nb,M_k,\delta,\Delta_t \right) }  \right) \left(\sqrt{\delta} + \frac{K_2 \left( \nb,M_k,\delta \right)}{\underset{t=\{1,\ldots,T\}}{\min} \left\{\Delta_t \sqrt{\lambda_t - 4 M_k \sqrt{\frac{\delta}{\nb}}} \right\}} \right)^2,
\end{eqnarray*}
  and then
$$\PPO \left(\widehat{D}^2_T > x \right)    \leq  \underset{t=1}{\overset{T}{\sum}} \ 2 e^{-C_t^2} + 7 T e^{-\delta},$$ where
\begin{align}
    C_t &\leq \sqrt{\frac{2 T \underset{t=1,\ldots,T}{\max}  \left( \frac{2\lambda_t - 8 M_k \sqrt{\frac{\delta}{\nb}}}{2\lambda_t - \left( 8 M_k \sqrt{\frac{\delta}{\nb}} + \frac{48 \sqrt{2} M_k^2 \left(1+ \sqrt{2\delta}\right) \left(1+ \sqrt{\frac{\delta}{2}}\right)}{\nb \Delta_t} + 2 R_{\nb} \right)}\right)  \left(\sqrt{\delta} + \frac{\frac{12 M_k^{\frac{3}{2}}}{\sqrt{\nb}} \left(1+ \sqrt{2\delta}\right) \left(1+ \sqrt{\frac{\delta}{2}}\right)}{ \underset{t=\{1,\ldots,T\}}{\min} \left\{\Delta_t \sqrt{2\lambda_t - 8 M_k \sqrt{\frac{\delta}{\nb}}} \right\}} \right)^2}{2T}} \notag \\
    & \hspace{0.5 cm} \times \underset{t=1,\ldots,T}{\min} \Bigg\{ \sqrt{\frac{2\lambda_t - \left( 8 M_k \sqrt{\frac{\delta}{\nb}} + \frac{48 \sqrt{2} M_k^2 \left(1+ \sqrt{2\delta}\right) \left(1+ \sqrt{\frac{\delta}{2}}\right)}{\nb \Delta_t} + 2 R_{\nb} \right)}{2\lambda_t - 8 M_k \sqrt{\frac{\delta}{\nb}}}}\Bigg\} - \frac{\frac{12 M_k^{\frac{3}{2}}}{\sqrt{\nb}} \left(1+ \sqrt{2\delta}\right) \left(1+ \sqrt{\frac{\delta}{2}}\right)}{\underset{t=\{1,\ldots,T\}}{\min} \left\{\Delta_t \sqrt{2\lambda_t - 8 M_k \sqrt{\frac{\delta}{\nb}}} \right\}} \notag \\
    &= \frac{\sqrt{\delta} + \frac{\frac{12 M_k^{\frac{3}{2}}}{\sqrt{\nb}} \left(1+ \sqrt{2\delta}\right) \left(1+ \sqrt{\frac{\delta}{2}}\right)}{ \underset{t=\{1,\ldots,T\}}{\min} \left\{\Delta_t \sqrt{2\lambda_t - 8 M_k \sqrt{\frac{\delta}{\nb}}} \right\}}
    }{\sqrt{\underset{t=1,\ldots,T}{\min}  \left( \frac{2\lambda_t - \left( 8 M_k \sqrt{\frac{\delta}{\nb}} + \frac{48 \sqrt{2} M_k^2 \left(1+ \sqrt{2\delta}\right) \left(1+ \sqrt{\frac{\delta}{2}}\right)}{\nb \Delta_t} + 2 R_{\nb}  \right)}{2\lambda_t - 8 M_k \sqrt{\frac{\delta}{\nb}}}  \right)}}   \notag \\
    & \hspace{0.5 cm} \times \underset{t=1,\ldots,T}{\min} \Bigg\{ \sqrt{\frac{2\lambda_t - \left( 8 M_k \sqrt{\frac{\delta}{\nb}} + \frac{48 \sqrt{2} M_k^2 \left(1+ \sqrt{2\delta}\right) \left(1+ \sqrt{\frac{\delta}{2}}\right)}{\nb \Delta_t} + 2 R_{\nb} \right)}{2\lambda_t - 8 M_k \sqrt{\frac{\delta}{\nb}}}}\Bigg\} - \frac{\frac{12 M_k^{\frac{3}{2}}}{\sqrt{\nb}} \left(1+ \sqrt{2\delta}\right) \left(1+ \sqrt{\frac{\delta}{2}}\right)}{\underset{t=\{1,\ldots,T\}}{\min} \left\{\Delta_t \sqrt{2\lambda_t - 8 M_k \sqrt{\frac{\delta}{\nb}}} \right\}} \notag \\
    &= \sqrt{\delta}\notag .
\end{align}
Note that as we said before, for ease of presentation, we upper bounded each term $\frac{\nb-1}{\nb}$ and $\frac{\nb-1}{\nb^{\frac{3}{2}}}$ by $1$ and $\frac{1}{\sqrt{\nb}}$ respectively. By keeping all the terms in $\nb$, the inequality just above applied as before with  $x = \qmaj$ is an equality. 
Hence, we finally get $$ \PPO \left(\widehat{D}^2_T >   \qmaj \right) \leq 9 T e^{-\delta},$$
which concludes the proof. 
%%%%%%%%%%%%%%
\subsection{Proof of Corollary \ref{bound_simple}}
\begin{proof}[Proof of Corollary \ref{bound_simple}]
Let us assume \A{1}$\sim$\A{3} and Conditions (\ref{A4bis}), (\ref{lambda_t_condition}) for all $t \in \{1,\ldots,T\}$, let also assume that  (\ref{K_3_simplified}) is satisfied for the constant $c$ introduced on the Corollary. Let $\delta>0$.
From Theorem~\ref{the_theorem}, we have 
\begin{equation*}
      \qmaj \leq   \qmajbis,
\end{equation*}
providing simply that
\begin{align*}
    \PPO \left(\widehat{D}^2_T > \qmajbis \right) &\leq \PPO \left(\widehat{D}^2_T > \qmaj  \right) \\
    &\leq 9 T e^{-\delta},
\end{align*}
which concludes the proof. 
\end{proof}
%%%%%%%%%%%%%%
\subsection{Preliminary results}
\label{Preliminary_results}
%%%%%%%
\subsubsection*{Upper bounds over the elements in $(\Hk,\|\cdot\|_{\Hk})$}
\begin{lemma}
\label{kernel_bound}
Assume \A{2}. For all $Z$ following $\PP$, 
\begin{equation*}
     \left \| \phi(Z) \right \| \leq \sqrt{M_k}.
\end{equation*}
\end{lemma}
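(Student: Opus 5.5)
The claim is a direct consequence of the reproducing property together with Assumption \A{2}, so I will argue pointwise, for every realization $z\in\mathcal{Z}$ of $Z$, and the probabilistic statement then holds surely (not merely almost surely). The norm here is the RKHS norm $\|\cdot\|_{\Hk}$, and $\phi(z)=k(z,\cdot)$ is the feature map introduced at the start of the appendix.

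The first step is to write $\|\phi(z)\|_{\Hk}^2=\langle k(z,\cdot),k(z,\cdot)\rangle_{\Hk}$. The reproducing property $\langle f,k(z,\cdot)\rangle_{\Hk}=f(z)$, applied with $f=k(z,\cdot)\in\Hk$, turns this inner product into $k(z,z)$. This is also the identity $k(z,z')=\langle\phi(z),\phi(z')\rangle_{\Hk}$ recalled in the appendix, taken with $z'=z$.

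The second step is to bound $k(z,z)$. By definition of $M_k$ in \A{2}, we have $k(z,z)\le \sup_{z'\in\mathcal{Z}}k(z',z')=M_k<+\infty$. Note that $k(z,z)=\|\phi(z)\|_{\Hk}^2\ge 0$, so taking square roots is legitimate and monotone. This gives $\|\phi(z)\|_{\Hk}\le\sqrt{M_k}$ for every $z$, and hence for $Z\sim\PP$.

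There is no real obstacle here. The only point worth recording, for later use in the events $E_1$ to $E_5$, is the consequence $\|W_i\|_{\Hk}\le\|\phi(X_i)\|_{\Hk}+\|\phi(Y_i)\|_{\Hk}\le 2\sqrt{M_k}$. This bound is used in \eqref{minorangle}.
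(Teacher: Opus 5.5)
Your proof is correct and matches the paper's: both use the reproducing property to write $\|\phi(Z)\|_{\Hk}^2=\langle\phi(Z),\phi(Z)\rangle_{\Hk}=k(Z,Z)$, then bound this by $M_k$ using \A{2} and take square roots. Your side remark that $\|W_i\|_{\Hk}\le 2\sqrt{M_k}$ also follows correctly from the triangle inequality and is how the lemma is used in \eqref{minorangle}.
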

\begin{proof}
From assumption \A{2},
\begin{align*}
    \left \| \phi(Z) \right \| = \sqrt{\langle \phi(Z),\phi(Z) \rangle_{\Hk}} = \sqrt{k(Z,Z)} \leq \sqrt{M_k}.
 \end{align*}
\end{proof}
\begin{lemma}
\label{bound_kernel}
Assume \A{2}. For all $(i,j) \in \{1,\ldots,\nb\}$ 
    \begin{equation*}
         \left \| \phi(X_i) - \widehat{\mu}_X \right \| \leq  2 \sqrt{M_k}  \quad \text{and} \quad \left \| \phi(Y_j) - \widehat{\mu}_Y \right \| \leq  2 \sqrt{M_k} .
   \end{equation*}
\end{lemma}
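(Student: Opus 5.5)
The plan is to write each centered feature as an average of differences of raw features and bound it with the triangle inequality. Everything reduces to Lemma~\ref{kernel_bound}, which gives $\|\phi(z)\|_{\Hk}\le\sqrt{M_k}$ for every $z$ in the support under \A{2}. Fix $i\in\{1,\ldots,\nb\}$. By definition of $\widehat{\mu}_X$,
\begin{equation*}
\phi(X_i)-\widehat{\mu}_X=\frac{1}{\nb}\sum_{l=1}^{\nb}\big(\phi(X_i)-\phi(X_l)\big).
\end{equation*}

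Apply the triangle inequality to this sum, then once more to each summand:
\begin{equation*}
\|\phi(X_i)-\phi(X_l)\|_{\Hk}\le\|\phi(X_i)\|_{\Hk}+\|\phi(X_l)\|_{\Hk}\le 2\sqrt{M_k}.
\end{equation*}
Averaging over $l$ gives $\|\phi(X_i)-\widehat{\mu}_X\|_{\Hk}\le 2\sqrt{M_k}$.

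There is a slightly sharper route. One can bound $\|\phi(X_i)\|_{\Hk}+\|\widehat{\mu}_X\|_{\Hk}$ directly, using $\|\widehat{\mu}_X\|_{\Hk}\le\frac{1}{\nb}\sum_l\|\phi(X_l)\|_{\Hk}\le\sqrt{M_k}$ by convexity of the norm. Either way the constant is $2\sqrt{M_k}$.

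The same argument applies verbatim to $\phi(Y_j)-\widehat{\mu}_Y$ for $j\in\{1,\ldots,\nb\}$. The bound is deterministic: it holds almost surely, since $\sup_z k(z,z)=M_k$ is a uniform bound. No step is a genuine obstacle. The only point requiring care is that Lemma~\ref{kernel_bound} is applied pointwise to every sample point, not just in expectation, which \A{2} guarantees.
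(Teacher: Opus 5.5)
Your argument is correct and is essentially the paper's proof, which writes $\phi(X_i)-\widehat{\mu}_X=\frac{1}{\nb}\sum_{j\neq i}\big(\phi(X_i)-\phi(X_j)\big)$ and applies the triangle inequality together with Lemma~\ref{kernel_bound} (the paper's proof cites the lemma itself at that step, a typo for Lemma~\ref{kernel_bound}). One small remark: the $l=i$ summand vanishes, so your first route actually yields $2\sqrt{M_k}(\nb-1)/\nb$, and the alternative via $\|\widehat{\mu}_X\|_{\Hk}\le\sqrt{M_k}$ is equally valid but not sharper.
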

\begin{proof}
Let $i$ be in $\{1,\ldots,\nb\}$. From the triangle inequality and Lemma \ref{bound_kernel}, we get 
\begin{equation*}
\left \| \phi(X_i) - \widehat{\mu}_X \right \| =  \left \| \frac{1}{\nb}  \underset{j \neq i}      {\sum} \left( \phi(X_i) - \phi(X_j) \right) \right \| \leq  \frac{1}{\nb} \underset{j \neq i}
     {\sum} \left( \left \| \phi(X_i) \right \| + \left \| \phi(X_j) \right \| \right) \leq  2 \sqrt{M_k}.
\end{equation*}
The same result is obtained for $\left \|  \phi(Y_i) - \widehat{\mu}_Y \right \|_{\Hk}$. 
\end{proof}
%%%%%%%%%
\subsubsection*{Concentration inequalities of $\Hk-$elements projected to the $\Sigma_T$-eigenfunctions.}
\begin{lemma}
\label{covariance_term}
Assume \A{2}. Let $t \in \{1,\ldots,T\}$. For all $\delta >0$ 
\begin{equation*}
    \PP \left(  \frac{1}{\nb} \underset{i=1}{\overset{\nb}{\sum}} \langle f_t, \widehat{\mu}_X - \phi(X_i) \rangle_{\Hk} \langle f_t,\phi(Y_i) - \widehat{\mu}_Y  \rangle_{\Hk}> 8 M_k \sqrt{\frac \delta \nb} \right) \leq e^{- \delta}.
\end{equation*}
\end{lemma}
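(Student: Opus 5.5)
The plan is to prove the bound with McDiarmid's bounded differences inequality applied to
\[
G := \frac{1}{\nb}\sum_{i=1}^{\nb} \langle f_t, \widehat{\mu}_X - \phi(X_i)\rangle_{\Hk}\langle f_t, \phi(Y_i)-\widehat{\mu}_Y\rangle_{\Hk},
\]
viewed as a function of the $2\nb$ independent variables $X_1,\ldots,X_\nb,Y_1,\ldots,Y_\nb$. I would then show that its expectation is nonpositive, so the centering costs nothing.

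\textbf{Step 1: the mean.} Write $a_i = \langle f_t, \phi(X_i)-\mu\rangle_{\Hk}$ and $b_i=\langle f_t,\phi(Y_i)-\mu\rangle_{\Hk}$. These are centered and all independent, since the $X$'s and $Y$'s are independent samples. Then
\[
G = -\frac1\nb\sum_i (a_i-\bar a)(b_i-\bar b) = -\Big(\frac1\nb\sum_i a_ib_i - \bar a\,\bar b\Big).
\]
Because $a$ and $b$ are independent and centered, $\EE[a_ib_j]=0$ for all $i,j$, hence $\EE[G]=0$. It would even suffice to have $\EE[G]\le 0$.

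\textbf{Step 2: bounded differences.} By Lemma~\ref{bound_kernel} and $\|f_t\|_{\Hk}=1$, each factor $\langle f_t,\widehat\mu_X-\phi(X_i)\rangle_{\Hk}$ and $\langle f_t,\phi(Y_i)-\widehat\mu_Y\rangle_{\Hk}$ is bounded by $2\sqrt{M_k}$ in absolute value.

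Replace $X_j$ by an independent copy $X_j'$. The centered coordinates $a_i-\bar a$ change by $(1-1/\nb)(a_j-a_j')$ for $i=j$ and by $-(a_j-a_j')/\nb$ for $i\neq j$. Here $|a_j-a_j'|\le 2\sqrt{M_k}$.

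The $i=j$ term therefore moves $G$ by at most $\frac1\nb\cdot 2\sqrt{M_k}\cdot 2\sqrt{M_k}=4M_k/\nb$. Each of the other terms moves it by $\frac{1}{\nb^2}\cdot 2\sqrt{M_k}\,|b_i-\bar b|$. The centered $b$'s sum to zero, so their total contribution is $\frac{2\sqrt{M_k}}{\nb^2}\big|\sum_{i\ne j}(b_i-\bar b)\big| = \frac{2\sqrt{M_k}}{\nb^2}|b_j-\bar b|\le 4M_k/\nb^2$.

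This gives $c_j\le 8M_k/\nb$, with a crude factor-of-two allowance, and symmetrically for each $Y_j$.

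\textbf{Step 3: conclude.} McDiarmid then gives
\[
\PP(G-\EE G>\varepsilon)\le \exp\!\Big(-\frac{2\varepsilon^2}{\sum_{j=1}^{2\nb}c_j^2}\Big) = \exp\!\Big(-\frac{2\varepsilon^2}{2\nb\cdot 64M_k^2/\nb^2}\Big) = \exp\!\Big(-\frac{\nb\varepsilon^2}{64M_k^2}\Big).
\]
Setting $\varepsilon=8M_k\sqrt{\delta/\nb}$ yields exactly $e^{-\delta}$, and $\EE G\le 0$ finishes the proof.

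The main obstacle is Step 2. If one uses only the crude bound $|b_i-\bar b|\le 2\sqrt{M_k}$ for the $i\neq j$ terms, the change in $G$ is of order $8M_k/\nb$ rather than $4M_k/\nb$. The argument therefore has to either exploit $\sum_i(b_i-\bar b)=0$ as above, or accept the constant $8M_k/\nb$, which still fits the target since $8$ is exactly what the stated threshold allows. Either way the bounded-difference constant is at most $8M_k/\nb$, and the stated bound follows.
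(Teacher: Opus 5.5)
Your proof is correct and follows the paper's route: McDiarmid's inequality over the $2n$ coordinates with bounded-difference constants $8M_k/n$, $\EE[G]=0$ from the independence of the two centered samples, and then the choice $\varepsilon = 8M_k\sqrt{\delta/n}$. Your observation that $\sum_i (b_i-\bar b)=0$ actually sharpens the constant to about $4M_k/n$, whereas the paper uses the crude bound to get exactly $8M_k/n$; this refinement is harmless but not needed for the stated threshold.
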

\begin{proof}
Let $t \in \{1,\ldots,T\}$. We apply Theorem \ref{McDiarmid's theo} in Section \ref{existing_results} (McDiarmid's inequality). For this purpose, we set $g : [\mathcal{Z}^{\otimes 2\nb} \longrightarrow \R]$ defined for all $ \left(x_1,\ldots,x_{\nb},y_1,\ldots,y_{\nb}\right) \in \mathcal{Z}^{\otimes 2\nb}$ by
$$g\left(x_1,\ldots,x_{\nb},y_1,\ldots,y_{\nb}\right) =  \frac{1}{\nb} \underset{i=1}{\overset{\nb}{\sum}} \langle f_t, \widehat{\nu}_1 - \phi(x_i) \rangle_{\Hk} \langle f_t,\phi(y_i) - \widehat{\nu}_2  \rangle_{\Hk},$$
where $\widehat{\nu}_1 = \frac{1}{\nb} \underset{i=1}{\overset{\nb}{\sum}} \phi(x_i)$ and $\widehat{\nu}_2 = \frac{1}{\nb} \underset{i=1}{\overset{\nb}{\sum}} \phi(y_i)$, and we prove that $g$ satisfies the bounded differences property (see Definition \ref{def_bounded_difference_property} in Section \ref{existing_results}). For $i \in \{1,\ldots,\nb\}$, we define $x_i'$ any copy of $x_i$ and $\widehat{\nu}_1' = \frac{1}{\nb}  \underset{j \neq i}      {\sum} \phi(x_j) + \frac{1}{\nb} \phi(x_i')$. Then, by using successively the triangle inequality, the Cauchy-Schwarz inequality and Lemmas~\ref{kernel_bound} and~\ref{bound_kernel}, we get 
\begin{align*}
     & \left | g\left(x_1,\ldots,x_{i-1},x_i',x_{i+1},\ldots,x_{\nb},y_1,\ldots,y_{\nb}\right) - g\left(x_1,\ldots,x_{\nb},y_1,\ldots,y_{\nb}\right) \right | \\
     &  \leq \frac{1}{\nb}  \underset{j \neq i}{\sum}  \left | \langle f_t,\phi(y_j) - \widehat{\nu}_2  \rangle_{\Hk} \right | \left | \langle f_t, \widehat{\nu}_1' - \widehat{\nu}_1 \rangle_{\Hk}\right | \\
     & \hspace{1 cm} + \frac{1}{\nb} \left | \langle f_t,\phi(y_i) - \widehat{\nu}_2  \rangle_{\Hk} \right | \left( \left | \langle f_t, \widehat{\nu}_1' - \widehat{\nu}_1\rangle_{\Hk} \right | + \left | \langle f_t, \phi(x_i) -  \phi(x_i') \rangle_{\Hk} \right | \right) \\
    & \leq \frac{1}{\nb}  \underset{j \neq i}      {\sum}  \left \| f_t \right \|  \left \| \phi(y_j) - \widehat{\nu}_2  \right \|  \left \| f_t \right \| \frac{1}{\nb}  \left \|  \phi(x_i') - \phi(x_i)  \right \| \\
    & \hspace{1 cm} + \frac{1}{\nb} \left \| f_t \right \|  \left \| \phi(y_i) - \widehat{\nu}_2  \right \|  \left( \left \| f_t \right \| \frac{1}{\nb}  \left \|  \phi(x_i') - \phi(x_i)  \right \| +  \left \| f_t \right \|  \left \|  \phi(x_i) - \phi(x_i') \right \| \right)\\ 
    & \leq \frac{8}{\nb} M_k .
 \end{align*}
The same upper bound is similarly obtained on 
$$\left | g\left(x_1,\ldots,x_{\nb},y_1,\ldots,y_{i-1},y_i',y_{i+1},\ldots,y_{\nb}\right) - g\left(x_1,\ldots,x_{\nb},y_1,\ldots,y_{\nb}\right) \right |$$ where $i \in \{\nb + 1,\ldots,2\nb\}$, $y_i'$ is a copy of $y_i$ and $\widehat{\nu}_2' = \frac{1}{\nb}  \underset{j \neq i}{\sum} \phi(y_j) + \frac{1}{\nb} \phi(y_i')$. \\
Hence, $g$ satisfies the bounded differences property with bounds $c_i=  \frac{8}{\nb} M_k$ for all $i \in \{1,\ldots,2 \nb \}$. \\
As 
$$ \EE[g\left(X_1,\ldots,X_{\nb},Y_1,\ldots,Y_{\nb}\right)] = \langle f_t, \EE[\widehat{\mu}_X - \phi(X_i)] \rangle_{\Hk} \langle f_t, \EE[\phi(Y_i)-\widehat{\mu}_Y] \rangle_{\Hk} = 0,$$
from the  independence between $(\widehat{\mu}_X - \phi(X_i))_i$ and $(\phi(Y_i) - \widehat{\mu}_Y)_i$, we get from
the McDiarmid's inequality (Theorem \ref{McDiarmid's theo}) applied on $g$ that $\forall \varepsilon >0$ 
\begin{equation*}
    \PP \left(  \frac{1}{\nb} \underset{i=1}{\overset{\nb}{\sum}} \langle f_t, \widehat{\mu}_X - \phi(X_i) \rangle_{\Hk} \langle f_t,\phi(Y_i) - \widehat{\mu}_Y  \rangle_{\Hk}> \varepsilon  \right) \leq e^{- \frac{\nb \varepsilon^2}{8^2   M_k^2}},
\end{equation*}
\ti{e.g.} for all $\delta >0$ 
\begin{equation*}
    \PP \left(  \frac{1}{\nb} \underset{i=1}{\overset{\nb}{\sum}} \langle f_t, \widehat{\mu}_X - \phi(X_i) \rangle_{\Hk} \langle f_t,\phi(Y_i) - \widehat{\mu}_Y  \rangle_{\Hk}> 8 M_k \sqrt{\frac \delta \nb}   \right) \leq e^{- \delta}.
\end{equation*}
\end{proof}
\begin{lemma}
\label{num_den_term}
Assume \A{2}. Let $t \in \{1,\ldots,T\}$. For all $\delta>0$ 
\begin{equation*}
    \PPO \left(  \frac{1}{\nb} \underset{i=1}{\overset{\nb}{\sum}} \langle f_t, \phi(X_i) - \phi(Y_i) \rangle_{\Hk}^2 - 2 \lambda_t < - 8 M_k \sqrt{\frac{\delta}{\nb}}  \right) \leq e^{- \delta}.
\end{equation*}
\end{lemma}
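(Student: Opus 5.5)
The plan is to prove Lemma~\ref{num_den_term} the same way as Lemma~\ref{covariance_term}: apply McDiarmid's inequality (Theorem~\ref{McDiarmid's theo}) to
\[
g(x_1,\ldots,x_{\nb},y_1,\ldots,y_{\nb}) = \frac{1}{\nb}\sum_{i=1}^{\nb} \langle f_t, \phi(x_i)-\phi(y_i)\rangle_{\Hk}^2,
\]
viewed as a function of the $2\nb$ independent variables $X_1,\ldots,X_{\nb},Y_1,\ldots,Y_{\nb}$. The argument has two ingredients: the mean of $g$, and its bounded differences.

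\emph{Computing the mean.} Under $H_0$, $X_i$ and $Y_i$ are i.i.d.\ with law $\PP$, so $\langle f_t,\phi(X_i)\rangle_{\Hk}$ and $\langle f_t,\phi(Y_i)\rangle_{\Hk}$ are independent with common mean $\langle f_t,\mu\rangle_{\Hk}$. Therefore
\[
\EE\langle f_t,\phi(X_i)-\phi(Y_i)\rangle_{\Hk}^2 = 2\,\mathrm{Var}\,\langle f_t,\phi(X)\rangle_{\Hk}.
\]
This variance equals $\langle \Sigma f_t, f_t\rangle_{\Hk} = \lambda_t$, by the definition of $\Sigma$ via $(\phi-\mu)^{\otimes 2}$ together with~(\ref{otimes_equality}). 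Hence $\EE g = 2\lambda_t$. Since $\Sigma$ is trace class, this expectation is well defined.

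\emph{Bounded differences.} Replacing $x_i$ by $x_i'$ changes only the $i$-th summand. Each summand lies in $[0, \|f_t\|^2\|\phi(x_i)-\phi(y_i)\|^2] \subset [0,4M_k]$, using Cauchy--Schwarz, $\|f_t\|_{\Hk}=1$ and Lemma~\ref{kernel_bound}. So the change in $g$ is at most $c_i = 4M_k/\nb$, and the same bound holds for changes in $y_i$. This gives
\[
\sum_{i=1}^{2\nb} c_i^2 = 2\nb\cdot\frac{16M_k^2}{\nb^2} = \frac{32M_k^2}{\nb}.
\]

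\emph{Applying the lower tail.} McDiarmid's lower-tail bound gives
\[
\PPO(g-\EE g < -\varepsilon) \le \exp\!\left(-\frac{2\varepsilon^2}{\sum_i c_i^2}\right) = \exp\!\left(-\frac{\nb\varepsilon^2}{16M_k^2}\right).
\]
Choosing $\varepsilon = 8M_k\sqrt{\delta/\nb}$ makes the exponent $-4\delta \le -\delta$, which yields the claim, with room to spare. If Theorem~\ref{McDiarmid's theo} is stated only for upper deviations, apply it to $-g$, which has the same bounded differences. If it is stated in the weaker form $\exp(-\nb\varepsilon^2/(c^2M_k^2))$ used in Lemma~\ref{covariance_term}, i.e.\ with the constant $8$, the choice of $\varepsilon$ still gives exactly $e^{-\delta}$.

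The only mildly delicate step is the mean computation. The cross term $-2\,\EE[\langle f_t,\phi(X)\rangle\langle f_t,\phi(Y)\rangle]$ cancels the squared-mean parts, and this relies on both the independence of $X_i$ and $Y_i$ and their equal distribution under $H_0$; the homoscedastic assumption ensures that $\Sigma$ is the common covariance. Everything else is routine.
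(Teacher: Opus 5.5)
Your proof is correct and follows the paper's argument: McDiarmid's inequality applied to the same function of the $2\nb$ independent variables, with mean $2\lambda_t$ obtained from $H_0$ and the independence of $X_i$ and $Y_i$. Your bounded-difference constant $4M_k/\nb$ (each summand lies in $[0,4M_k]$) is sharper than the paper's $8M_k/\nb$, so you get $e^{-4\delta}$ instead of $e^{-\delta}$, which still implies the stated bound.
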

\begin{proof}
Let $t \in \{1,\ldots,T\}$. We apply Theorem \ref{McDiarmid's theo} in Section \ref{existing_results} (McDiarmid's inequality). For this purpose, we set $h : [\mathcal{Z}^{\otimes 2\nb} \longrightarrow \R]$ defined for all $ \left(x_1,\ldots,x_{\nb},y_1,\ldots,y_{\nb}\right) \in \mathcal{Z}^{\otimes 2\nb}$ by 
$$  h\left(x_1,\ldots,x_{\nb},y_1,\ldots,y_{\nb}\right) =  \frac{1}{\nb} \underset{i=1}{\overset{\nb}{\sum}} \langle f_t, \phi(x_i) - \phi(y_i)  \rangle_{\Hk}^2,$$
and we prove that $h$ satisfies the bounded differences property (see Definition \ref{def_bounded_difference_property} in Section \ref{existing_results}). 
For $i \in \{1,\ldots,\nb\}$, we define $x_i'$ any copy of $x_i$. Then, by using successively the triangle inequality, the Cauchy-Schwarz inequality and Lemmas~\ref{kernel_bound} and~\ref{bound_kernel}, we get 
\begin{align*}
    & \left | h\left(x_1,\ldots,x_{i-1},x_i',x_{i+1},\ldots,x_{\nb},y_1,\ldots,y_{\nb}\right) - h\left(x_1,\ldots,x_{\nb},y_1,\ldots,y_{\nb}\right) \right | \\
    &\leq \frac{1}{\nb} \left | \langle f_t, \phi(x_i') - \phi(y_i)  \rangle_{\Hk} \right |^2 + \left | \langle f_t, \phi(x_i) - \phi(y_i)  \rangle_{\Hk} \right |^2 \\
    &\leq \frac{1}{\nb} \left \| f_t \right \|^2 \left \| \phi(x_i') - \phi(y_i)  \right \|^2 + \left \| f_t \right \|^2 \left \| \phi(x_i) - \phi(y_i)  \right \|^2 \\ 
    & \leq \frac{8}{\nb} M_k. \\
\end{align*}
The same upper bound is similarly obtained on $$\left | h\left(x_1,\ldots,x_{\nb},y_1,\ldots,y_{i-1},y_i',y_{i+1},\ldots,y_{\nb}\right) - h\left(x_1,\ldots,x_{\nb},y_1,\ldots,y_{\nb}\right) \right |$$ where $i \in \{\nb + 1,\ldots,2\nb\}$ and $y_i'$ is a copy of $y_i$. \\
Hence, $h$ satisfies the bounded differences property with bounds $c_i=\frac{8}{\nb} M_k$ for all $i \in \{1,\ldots,2 \nb \}$. \\
Moreover, from (\ref{otimes_equality}), equality $\mu_X = \mu_Y$ under $H_0$ and the independence between $\phi(X)$ and $\phi(Y)$, we get 
\begin{align*}
    \lambda_t
    &= \langle f_t, \Sigma (f_t) \rangle_{\Hk} \\
    & = \langle f_t, \frac 12 \left(\EE_{\PP_X} \left [ \left(\phi(X) - \mu_X \right)^{\otimes^2_{\Hk}} \right] +\EE_{\PP_Y} \left [ \left(\phi(Y) - \mu_Y \right)^{\otimes^2_{\Hk}} \right] \right)(f_t) \rangle_{\Hk} \\
    & = \frac 12 \EE_{\PP_{X,Y}} \left [ \langle f_t,  \langle f_t, \phi(X) - \mu_X  \rangle_{\Hk} \left( \phi(X) - \mu_X  \right) \rangle_{\Hk} + \langle f_t,  \langle f_t, \phi(Y) - \mu_Y  \rangle_{\Hk} \left( \phi(Y) - \mu_Y \right) \rangle_{\Hk} \right ]  \\
    & = \frac 12 \EE_{\PP_{X,Y}} \left [ \langle f_t, \phi(X) - \mu_X  \rangle_{\Hk}^2 + \langle f_t, \phi(Y) - \mu_Y  \rangle_{\Hk}^2 \right ]  \\
    &= \frac 12 \EE_{\PP_{X,Y}} \left [ \langle f_t, \phi(X) - \mu_X - \left( \phi(Y) - \mu_Y \right) \rangle_{\Hk}^2 +   \langle f_t, \phi(X) - \mu_X \rangle_{\Hk} \langle f_t, \phi(Y) - \mu_Y \rangle_{\Hk}  \right ] \\
    & = \frac 12 \EE_{\PP_{X,Y}} \left [ \langle f_t, \phi(X) - \phi(Y) \rangle_{\Hk}^2  \right ] +   \langle f_t,\EE_{\PP_{X,Y}} \left [ \phi(X) - \mu_X  \right ] \rangle_{\Hk} \langle f_t, \EE_{\PP_{X,Y}} \left [ \phi(Y) - \mu_Y \right ]  \rangle_{\Hk} \\
    & = \frac 12 \EE_{\PP_{X,Y}} \left [ \langle f_t, \phi(X) - \phi(Y) \rangle_{\Hk}^2  \right ],
\end{align*}
where $p = \frac{1}{2}$ in formula (\ref{within_covariance_operator}) under assumption \A{1}. 
From the McDiarmid's inequality (Theorem \ref{McDiarmid's theo}) applied on $h$, we get $\forall \varepsilon >0$ 
\begin{equation*}
    \PPO \left(  \frac{1}{\nb} \underset{i=1}{\overset{\nb}{\sum}} \langle f_t, \phi(X_i) - \phi(Y_i)  \rangle_{\Hk}^2 - 2 \lambda_t < -\varepsilon  \right) \leq e^{- \frac{\nb \varepsilon^2}{64 M_k^2}},
\end{equation*}
\ti{e.g.} for all $\delta >0$ 
\begin{equation*}
    \PPO \left(  \frac{1}{\nb} \underset{i=1}{\overset{\nb}{\sum}} \langle f_t, \phi(X_i) - \phi(Y_i) \rangle_{\Hk}^2 - 2 \lambda_t < - 8 M_k \sqrt{\frac{\delta}{\nb}}  \right) \leq e^{- \delta}.
\end{equation*}
\end{proof}
\begin{lemma}
\label{lemma_norm_Z_bar}
Assume \A{2}. 
For all $\delta>0$ 
\begin{equation*}
    \PPO \left(  \left\| \frac{1}{\nb} \underset{i=1}{\overset{\nb}{\sum}} \left(\phi(X_i) - \phi(Y_i)\right) \right\|_{\Hk} > \sqrt{\frac{2 M_k}{\nb}} \left(1+\sqrt{2\delta}\right) \right) \leq e^{-\delta}.
\end{equation*}
\end{lemma}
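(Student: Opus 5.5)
The plan is to apply McDiarmid's inequality (Theorem~\ref{McDiarmid's theo}) to the norm of the empirical mean, viewed as a function of the $2\nb$ independent coordinates $X_1,\ldots,X_\nb,Y_1,\ldots,Y_\nb$. Separately, I will bound its expectation by Jensen's inequality. The two contributions should match the two summands $\sqrt{2M_k/\nb}$ and $\sqrt{2M_k/\nb}\,\sqrt{2\delta}$ in the threshold.

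\textbf{Step 1: bounded differences.} Define $g(x_1,\ldots,x_\nb,y_1,\ldots,y_\nb)=\bigl\|\frac1\nb\sum_{i=1}^\nb(\phi(x_i)-\phi(y_i))\bigr\|_{\Hk}$. Replacing a single $x_i$ by $x_i'$ changes $g$ by at most $\frac1\nb\|\phi(x_i)-\phi(x_i')\|_{\Hk}$, by the reverse triangle inequality. By Lemma~\ref{kernel_bound} this is at most $2\sqrt{M_k}/\nb$. The same bound holds when one $y_i$ is replaced. Hence $g$ has the bounded differences property with $c_j=2\sqrt{M_k}/\nb$ for each of the $2\nb$ coordinates. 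It is essential to use the $2\nb$ coordinates separately rather than the $\nb$ pairs $W_i$, since the latter would give $c_i=4\sqrt{M_k}/\nb$ and lose a factor $\sqrt2$. We get $\sum_{j=1}^{2\nb}c_j^2=8M_k/\nb$, so McDiarmid gives
\[
\PPO\bigl(g-\EE g>\varepsilon\bigr)\le \exp\Bigl(-\tfrac{2\varepsilon^2}{8M_k/\nb}\Bigr)=\exp\Bigl(-\tfrac{\nb\varepsilon^2}{4M_k}\Bigr).
\]
Choosing $\varepsilon=2\sqrt{M_k\delta/\nb}=\sqrt{2M_k/\nb}\,\sqrt{2\delta}$ makes the right-hand side equal to $e^{-\delta}$.

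\textbf{Step 2: the mean.} By Jensen, $\EE g\le\sqrt{\EE g^2}$. Under $H_0$ the $W_i=\phi(X_i)-\phi(Y_i)$ are i.i.d.\ and centred, so the cross terms vanish and $\EE g^2=\frac1\nb\EE\|W_1\|_{\Hk}^2$. Since $X_1$ and $Y_1$ are independent with common mean embedding $\mu$,
\[
\EE\|W_1\|_{\Hk}^2=\EE\|\phi(X)-\mu\|_{\Hk}^2+\EE\|\phi(Y)-\mu\|_{\Hk}^2.
\]
Each term equals $\EE k(Z,Z)-\|\mu\|_{\Hk}^2\le M_k$ by \A{2}. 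Therefore $\EE g\le\sqrt{2M_k/\nb}$.

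\textbf{Conclusion.} Combining the two steps, the event $\{g>\sqrt{2M_k/\nb}(1+\sqrt{2\delta})\}$ is contained in $\{g-\EE g>\sqrt{2M_k/\nb}\,\sqrt{2\delta}\}$. By Step 1 this event has probability at most $e^{-\delta}$, which is the claim.

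The only delicate point is the constant bookkeeping in Step 1: the split over $2\nb$ coordinates and the McDiarmid exponent $2\varepsilon^2/\sum c_j^2$ are exactly what yield the factor $\sqrt{2\delta}$ rather than $2\sqrt{\delta}$.
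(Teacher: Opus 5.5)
Your proposal is correct and follows essentially the same route as the paper's proof: McDiarmid's inequality applied over the $2\nb$ separate coordinates with $c_j = 2\sqrt{M_k}/\nb$, combined with a Jensen bound $\EE g \le \sqrt{2M_k/\nb}$. Your computation of $\EE\|W_1\|_{\Hk}^2$, which uses centering and independence under $H_0$ to get $2(\EE k(Z,Z) - \|\mu\|_{\Hk}^2) \le 2M_k$, justifies that mean bound more cleanly than the paper does; the paper appeals to the triangle inequality at that step, which alone would only give $4M_k$.
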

\begin{proof}
We apply Theorem \ref{McDiarmid's theo} in Section \ref{existing_results} (McDiarmid's inequality). For this purpose, we set $v : [\mathcal{Z}^{\otimes 2\nb} \longrightarrow \R]$ defined for all $ \left(x_1,\ldots,x_{\nb},y_1,\ldots,y_{\nb}\right) \in \mathcal{Z}^{\otimes 2\nb}$ by 
$$   v\left(x_1,\ldots,x_{\nb},y_1,\ldots,y_{\nb}\right) =  \left\| \frac{1}{\nb} \underset{i=1}{\overset{\nb}{\sum}} \left(\phi(x_i) - \phi(y_i)\right) \right\|_{\Hk},$$
and we prove that $v$ satisfies the bounded differences property (see Definition \ref{def_bounded_difference_property} in Section \ref{existing_results}). 
For $i \in \{1,\ldots,\nb\}$, we define $x_i'$ any copy of $x_i$. Then, by using successively the triangle inequality and Lemma~\ref{kernel_bound}, we get 
\begin{align*}
    | v(x_1,&\ldots,x_{i-1},x_i',x_{i+1},\ldots,x_{\nb},y_1,\ldots,y_{\nb}) - v\left(x_1,\ldots,x_{\nb},y_1,\ldots,y_{\nb}\right) | \\
     & \leq \frac{1}{\nb} \left\|  \underset{j \neq i}{\underset{j=1}{\overset{\nb}{\sum}}} \left(\phi(x_j) - \phi(y_j)\right) + \left(\phi(x_i') - \phi(y_i)\right) - \left( \underset{j \neq i}{\underset{j=1}{\overset{\nb}{\sum}}} \left(\phi(x_j) - \phi(y_j)\right) + \left(\phi(x_i) - \phi(y_j)\right) \right) \right\|_{\Hk} \\
     & \leq \frac{1}{\nb} \left( \left\| \phi(x_i')\right\|_{\Hk} + \left\| \phi(x_i) \right\|_{\Hk} \right) \leq \frac{2}{n} \sqrt{M_k}.
\end{align*}
The same upper bound is similarly obtained on $$\left | v\left(x_1,\ldots,x_{\nb},y_1,\ldots,y_{i-1},y_i',y_{i+1},\ldots,y_{\nb}\right) - v\left(x_1,\ldots,x_{\nb},y_1,\ldots,y_{\nb}\right) \right |$$  where $i \in \{\nb + 1,\ldots,2\nb\}$ and $y_i'$ is a copy of $y_i$. \\ 
Hence, $v$ satisfies the bounded differences property with bounds $c_i= \frac{2}{n} \sqrt{M_k}$ for all $i \in \{1,\ldots,2 \nb \}$. \\
Moreover, from Jensen's inequality, the triangle inequality, Lemma~\ref{kernel_bound} and the independence between variables $\left(\phi(X_i) - \phi(Y_i)\right)_{\{1 \leq i \leq n\}}$, we get
\begin{align*}
    \EE[v\left(X_1,\ldots,X_{\nb},Y_1,\ldots,Y_{\nb}\right)] &\leq \sqrt{\EE\left[\left\| \frac{1}{\nb} \underset{i=1}{\overset{\nb}{\sum}} \left(\phi(X_i) - \phi(Y_i)\right) \right\|_{\Hk}^2\right]} \\
     &\leq \sqrt{\frac{1}{\nb^2} \underset{i=1}{\overset{\nb}{\sum}} \EE\left[ \left(\left\|\phi(X_i)\right\|_{\Hk}^2 + \left\|\phi(Y_i)\right\|_{\Hk}^2\right)\right]} \\
    &\leq \sqrt{\frac{2 M_k}{\nb}}.
    \label{temp_proof}
\end{align*}
From the McDiarmid's inequality (Theorem \ref{McDiarmid's theo}) on $v$, we get $\forall \varepsilon>0$ 
\begin{align*}
    \PPO &\left(  \left\| \frac{1}{\nb} \underset{i=1}{\overset{\nb}{\sum}} \left(\phi(X_i) - \phi(Y_i)\right) \right\|_{\Hk} - \sqrt{\frac{2 M_k}{\nb}} > \varepsilon  \right) \\
    &\leq \PPO \left(  \left\| \frac{1}{\nb} \underset{i=1}{\overset{\nb}{\sum}} \left(\phi(X_i) - \phi(Y_i)\right) \right\|_{\Hk} - \EE[v\left(X_1,\ldots,X_{\nb},Y_1,\ldots,Y_{\nb}\right)]  > \varepsilon  \right) \leq e^{-\frac{\varepsilon^2 \nb}{4 M_k}},
\end{align*}
\ti{e.g.} for all $\delta >0$ 
\begin{equation*}
    \PPO \left(  \left\| \frac{1}{\nb} \underset{i=1}{\overset{\nb}{\sum}} \left(\phi(X_i) - \phi(Y_i)\right) \right\|_{\Hk} > \sqrt{\frac{2 M_k}{\nb}} \left(1+\sqrt{2\delta}\right) \right) \leq e^{-\delta}.
\end{equation*}
\end{proof}
%%%%%%%%%%%% 
\subsubsection*{Some controls on the eigenvalue $\widehat{\lambda}_t$.}
\begin{lemma}
\label{lambda_t_chap_decompo}
For all $t \in \{1,\ldots,T\}$,
\begin{equation*}
   \widehat{\lambda}_t = \frac{1}{2\nb} \underset{i=1}{\overset{\nb}{\sum}} \langle \widehat{f}_t, \phi(X_i) - \phi(Y_i)\rangle_{\Hk}^2 -\frac{1}{2} \langle \widehat{f}_t, \widehat{\mu}_X - \widehat{\mu}_Y \rangle_{\Hk}^2 + \frac{1}{\nb} \underset{i=1}{\overset{\nb}{\sum}} \langle \widehat{f}_t, \phi(X_i) - \widehat{\mu}_X  \rangle_{\Hk} \langle \widehat{f}_t,\phi(Y_i) - \widehat{\mu}_Y  \rangle_{\Hk}.
\end{equation*}
\end{lemma}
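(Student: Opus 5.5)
The identity in Lemma~\ref{lambda_t_chap_decompo} is purely algebraic and holds pointwise, with no probabilistic content. We will start from the eigen-relation $\widehat{\lambda}_t = \langle \widehat{f}_t, \widehat{\Sigma} \widehat{f}_t \rangle_{\Hk}$, valid since $\widehat{f}_t$ is a unit eigenvector of $\widehat{\Sigma}$. Under \A{1}, (\ref{emprical_SW}) gives $\widehat{\Sigma} = \frac12(\widehat{\Sigma}_X + \widehat{\Sigma}_Y)$. Using (\ref{otimes_equality}), $\langle g, (h\otimes_{\Hk} h) g\rangle_{\Hk} = \langle g,h\rangle_{\Hk}^2$, so
$$\widehat{\lambda}_t = \frac{1}{2\nb}\sum_{i=1}^{\nb}\Big(a_i^2 + b_i^2\Big),$$
where we set $a_i := \langle \widehat{f}_t, \phi(X_i)-\widehat{\mu}_X\rangle_{\Hk}$ and $b_i := \langle \widehat{f}_t, \phi(Y_i)-\widehat{\mu}_Y\rangle_{\Hk}$. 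These satisfy $\sum_i a_i = \sum_i b_i = 0$.

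The key step is the polarization $a_i^2+b_i^2 = (a_i-b_i)^2 + 2a_ib_i$, which mimics the population computation in the proof of Lemma~\ref{num_den_term}. The cross term gives exactly $\frac{1}{\nb}\sum_i a_i b_i$, which is the last term of the claimed identity. For the squared difference, we note that $a_i - b_i = \langle \widehat{f}_t, \phi(X_i)-\phi(Y_i)\rangle_{\Hk} - m$, where $m := \langle \widehat{f}_t, \widehat{\mu}_X-\widehat{\mu}_Y\rangle_{\Hk}$ is also the empirical mean of the $\langle \widehat{f}_t, \phi(X_i)-\phi(Y_i)\rangle_{\Hk}$.

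We then use the usual empirical-variance identity
$$\frac1\nb\sum_i (u_i-m)^2 = \frac1\nb\sum_i u_i^2 - m^2,$$
with $u_i := \langle \widehat{f}_t, \phi(X_i)-\phi(Y_i)\rangle_{\Hk}$. Dividing by $2$ produces the first two terms, $\frac{1}{2\nb}\sum_i u_i^2 - \frac12 m^2$, and this completes the identity.

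There is no real obstacle. The only care needed is sign conventions: the statement writes the cross term with $\phi(X_i)-\widehat{\mu}_X$ and $\phi(Y_i)-\widehat{\mu}_Y$, which matches $a_ib_i$ exactly. We also keep the $\frac{1}{\nb}$ normalization of $\widehat{\Sigma}_X$ and $\widehat{\Sigma}_Y$, not $\frac{1}{\nb-1}$, so that no extra factors $\frac{\nb-1}{\nb}$ appear.
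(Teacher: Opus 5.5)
Your proposal is correct and follows essentially the same route as the paper: write $\widehat{\lambda}_t=\langle \widehat{f}_t,\widehat{\Sigma}\widehat{f}_t\rangle_{\Hk}=\frac{1}{2n}\sum_i(a_i^2+b_i^2)$ using \A{1}, then split $a_i^2+b_i^2=(a_i-b_i)^2+2a_ib_i$. Your explicit empirical-variance step, with $m=\frac{1}{n}\sum_i u_i$, simply spells out the final passage from $\frac{1}{2n}\sum_i(a_i-b_i)^2$ to the first two terms, which the paper performs without comment.
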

\begin{proof}
Let $t \in \{1,\ldots,T\}$. From (\ref{otimes_equality}) and (\ref{emprical_SW}), we get 
\begin{align*}
    \widehat{\lambda}_t 
    &= \langle \widehat{f}_t, \widehat{\Sigma} (\widehat{f}_t) \rangle_{\Hk}  \\
    & = \langle \widehat{f}_t, \frac{1}{2\nb} \underset{i=1}{\overset{\nb}{\sum}} \left( \left( \phi(X_i) - \widehat{\mu}_X \right)^{\otimes_{\Hk}^2} + \left( \phi(Y_i) - \widehat{\mu}_Y \right)^{\otimes_{\Hk}^2} \right) \left(\widehat{f}_t \right) \rangle_{\Hk}  \\
    & = \frac{1}{2\nb} \underset{i=1}{\overset{\nb}{\sum}} \langle \widehat{f}_t, \langle \widehat{f}_t, \phi(X_i) - \widehat{\mu}_X \rangle_{\Hk} \left( \phi(X_i) - \widehat{\mu}_X \right) +  \langle \widehat{f}_t, \phi(Y_i) - \widehat{\mu}_Y \rangle_{\Hk} \left( \phi(Y_i) - \widehat{\mu}_Y \right) \rangle_{\Hk}  \\
    & = \frac{1}{2\nb} \underset{i=1}{\overset{\nb}{\sum}} \left(\langle \widehat{f}_t, \phi(X_i) - \widehat{\mu}_X \rangle_{\Hk}^2 +  \langle \widehat{f}_t, \phi(Y_i) - \widehat{\mu}_Y \rangle_{\Hk}^2 \right)  \\
    & = \frac{1}{2\nb} \underset{i=1}{\overset{\nb}{\sum}} \left( \langle \widehat{f}_t, \left(\phi(X_i) - \widehat{\mu}_X \right) - \left(\phi(Y_i) - \widehat{\mu}_Y \right) \rangle_{\Hk}^2 +  2 \langle \widehat{f}_t, \phi(X_i) - \widehat{\mu}_X  \rangle_{\Hk} \langle \widehat{f}_t,\phi(Y_i) - \widehat{\mu}_Y  \rangle_{\Hk} \right)  \\
    &= \frac{1}{2\nb} \underset{i=1}{\overset{\nb}{\sum}} \langle \widehat{f}_t, \phi(X_i) - \phi(Y_i)\rangle_{\Hk}^2 -\frac{1}{2} \langle \widehat{f}_t, \widehat{\mu}_X - \widehat{\mu}_Y \rangle_{\Hk}^2 + \frac{1}{\nb} \underset{i=1}{\overset{\nb}{\sum}} \langle \widehat{f}_t, \phi(X_i) - \widehat{\mu}_X  \rangle_{\Hk} \langle \widehat{f}_t,\phi(Y_i) - \widehat{\mu}_Y  \rangle_{\Hk}.
\end{align*}
\end{proof}
\begin{lemma}
\label{negligeble_terms}
Assume \A{2}. For all $t \in \{1,\ldots,T\}$,
\begin{align*} 
    \widehat{\lambda}_t &\geq \frac{1}{2\nb} \underset{i=1}{\overset{\nb}{\sum}} \langle \widehat{f}_t, \phi(X_i) - \phi(Y_i)\rangle_{\Hk}^2 - \frac{1}{2} \left \|  \widehat{\mu}_X - \widehat{\mu}_Y \right \|_{\Hk}^2 - 8   M_k \left \|  f_t - \widehat{f}_t \right \|_{\Hk} \\
    & \hspace{1 cm} - \frac{1}{\nb} \underset{i=1}{\overset{\nb}{\sum}} \langle f_t, \widehat{\mu}_X - \phi(X_i) \rangle_{\Hk} \langle f_t,\phi(Y_i) - \widehat{\mu}_Y  \rangle_{\Hk}. 
\end{align*}
Moreover, the same inequality is satisfied with $f_t$ replaced by $s f_t$ with $s = \sign( \langle f_t, \widehat{f}_t \rangle)$.
\end{lemma}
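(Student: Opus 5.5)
Starting point: the exact decomposition of Lemma~\ref{lambda_t_chap_decompo},
\[
\widehat{\lambda}_t = \frac{1}{2\nb}\sum_{i=1}^{\nb}\langle \widehat{f}_t, \phi(X_i)-\phi(Y_i)\rangle_{\Hk}^2 -\frac12\langle \widehat{f}_t,\widehat{\mu}_X-\widehat{\mu}_Y\rangle_{\Hk}^2 + \frac1{\nb}\sum_{i=1}^{\nb}\langle \widehat{f}_t,\phi(X_i)-\widehat{\mu}_X\rangle_{\Hk}\langle \widehat{f}_t,\phi(Y_i)-\widehat{\mu}_Y\rangle_{\Hk}.
\]
The first term is kept unchanged. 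For the second term, Cauchy--Schwarz with $\|\widehat{f}_t\|_{\Hk}=1$ gives $\langle \widehat{f}_t,\widehat{\mu}_X-\widehat{\mu}_Y\rangle_{\Hk}^2\le \|\widehat{\mu}_X-\widehat{\mu}_Y\|_{\Hk}^2$, which is exactly the second term of the claimed bound. Everything else comes from the cross term.

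Write $a_i=\phi(X_i)-\widehat{\mu}_X$ and $b_i=\phi(Y_i)-\widehat{\mu}_Y$. Lemma~\ref{bound_kernel} gives $\|a_i\|_{\Hk},\|b_i\|_{\Hk}\le 2\sqrt{M_k}$. Put $g=\widehat{f}_t-f_t$. By bilinearity,
\[
\langle \widehat{f}_t,a_i\rangle\langle \widehat{f}_t,b_i\rangle=\langle f_t,a_i\rangle\langle f_t,b_i\rangle+\langle g,a_i\rangle\langle \widehat{f}_t,b_i\rangle+\langle f_t,a_i\rangle\langle g,b_i\rangle .
\]
Bound each of the two remainder terms in absolute value by Cauchy--Schwarz, using $\|f_t\|=\|\widehat{f}_t\|=1$:
\[
|\langle g,a_i\rangle\langle \widehat{f}_t,b_i\rangle|+|\langle f_t,a_i\rangle\langle g,b_i\rangle|\le 2\|g\|_{\Hk}\cdot 2\sqrt{M_k}\cdot 2\sqrt{M_k}=8M_k\|f_t-\widehat{f}_t\|_{\Hk}.
\]
Averaging over $i$ shows that the cross term is at least
\[
\frac1{\nb}\sum_{i=1}^{\nb}\langle f_t,a_i\rangle\langle f_t,b_i\rangle-8M_k\|f_t-\widehat{f}_t\|_{\Hk}.
\]
Since $\langle f_t,a_i\rangle=-\langle f_t,\widehat{\mu}_X-\phi(X_i)\rangle$, we have
\[
\frac1{\nb}\sum_{i=1}^{\nb}\langle f_t,a_i\rangle\langle f_t,b_i\rangle=-\frac1{\nb}\sum_{i=1}^{\nb}\langle f_t,\widehat{\mu}_X-\phi(X_i)\rangle\langle f_t,\phi(Y_i)-\widehat{\mu}_Y\rangle,
\]
which is the last term in the statement. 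Combining the three pieces gives the inequality.

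For the version with $sf_t$, I repeat the argument with $f_t$ replaced by $sf_t$, which is also a unit vector. The product $\langle sf_t,a_i\rangle\langle sf_t,b_i\rangle$ equals $\langle f_t,a_i\rangle\langle f_t,b_i\rangle$ because $s^2=1$, so the only change is that $\|f_t-\widehat{f}_t\|$ becomes $\|sf_t-\widehat{f}_t\|$.

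There is no real obstacle. The one point that needs care is the sign convention in the cross term: the statement writes $\widehat{\mu}_X-\phi(X_i)$ rather than $\phi(X_i)-\widehat{\mu}_X$, and that is what produces the minus sign in front of the last term. The constant $8$ comes from the two remainder terms, each bounded by $\|g\|\cdot 4M_k$.
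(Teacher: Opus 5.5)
Your proof is correct and follows the paper's argument step for step. You start from the identity of Lemma~\ref{lambda_t_chap_decompo}, bound $\langle \widehat{f}_t,\widehat{\mu}_X-\widehat{\mu}_Y\rangle_{\Hk}^2$ by Cauchy--Schwarz, write the cross term as the $f_t$-product plus two remainders each bounded by $4M_k\|f_t-\widehat{f}_t\|_{\Hk}$ via Lemma~\ref{bound_kernel}, and observe that the same computation goes through for $sf_t$ because $s^2=1$ and $\|sf_t\|_{\Hk}\le 1$.
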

\begin{proof}
Let $t \in \{1,\ldots,T\}$. \\
From Cauchy-Schwarz inequality and Lemma \ref{bound_kernel}, we get successively for all $i \in \{1,\ldots,\nb\}$ 
\begin{align*}
    \langle \widehat{f}_t, \widehat{\mu}_X - \widehat{\mu}_Y \rangle_{\Hk}^2 &\leq \left \|  \widehat{\mu}_X - \widehat{\mu}_Y \right \|_{\Hk}^2, \\
    \langle \widehat{f}_t, \phi(X_i) - \widehat{\mu}_X  \rangle_{\Hk} \langle \widehat{f}_t,\phi(Y_i) - \widehat{\mu}_Y  \rangle_{\Hk} &= - \langle f_t - \widehat{f}_t, \phi(X_i) - \widehat{\mu}_X  \rangle_{\Hk} \langle \widehat{f}_t,\phi(Y_i) - \widehat{\mu}_Y  \rangle_{\Hk} \\
    & \hspace{0.5 cm} - \langle f_t, \phi(X_i) - \widehat{\mu}_X  \rangle_{\Hk} \langle f_t - \widehat{f}_t,\phi(Y_i) - \widehat{\mu}_Y  \rangle_{\Hk} \\
    & \hspace{0.5 cm} - \langle f_t, \widehat{\mu}_X - \phi(X_i) \rangle_{\Hk} \langle f_t,\phi(Y_i) - \widehat{\mu}_Y  \rangle_{\Hk}, \\
    \langle f_t - \widehat{f}_t, \phi(X_i) - \widehat{\mu}_X  \rangle_{\Hk} \langle \widehat{f}_t,\phi(Y_i) - \widehat{\mu}_Y  \rangle_{\Hk} &\leq  \left | \langle f_t - \widehat{f}_t, \phi(X_i) - \widehat{\mu}_X  \rangle_{\Hk}  \right | \left | \langle \widehat{f}_t,\phi(Y_i) - \widehat{\mu}_Y  \rangle_{\Hk} \right | \\
    & \leq \left \|  f_t - \widehat{f}_t \right \|_{\Hk} \left \|   \phi(X_i) - \widehat{\mu}_X \right \|_{\Hk} \left \|  \widehat{f}_t \right \|_{\Hk} \left \|  \phi(Y_i) - \widehat{\mu}_Y \right \|_{\Hk} \\
    & \leq 4  M_k \left \|  f_t - \widehat{f}_t \right \|_{\Hk}, \\
    \text{and} \ \ \langle f_t, \phi(X_i) - \widehat{\mu}_X  \rangle_{\Hk} \langle f_t - \widehat{f}_t,\phi(Y_i) - \widehat{\mu}_Y  \rangle_{\Hk} &\leq  \left | \langle f_t, \phi(X_i) - \widehat{\mu}_X  \rangle_{\Hk} \right | \left | \langle f_t - \widehat{f}_t,\phi(Y_i) - \widehat{\mu}_Y \rangle_{\Hk} \right | \\
    & \leq \left \|  f_t \right \|_{\Hk} \left \|   \phi(X_i) - \widehat{\mu}_X \right \|_{\Hk} \left \|  f_t - \widehat{f}_t \right \|_{\Hk} \left \|  \phi(Y_i) - \widehat{\mu}_Y \right \|_{\Hk}  \\
    & \leq 4  M_k \left \|  f_t - \widehat{f}_t \right \|_{\Hk}.
\end{align*}
We deduce 
\begin{align*}
    \frac{1}{\nb} \underset{i=1}{\overset{\nb}{\sum}} \langle \widehat{f}_t, \phi(X_i) - \widehat{\mu}_X  \rangle_{\Hk} \langle \widehat{f}_t,\phi(Y_i) - \widehat{\mu}_Y  \rangle_{\Hk} &\geq - 8   M_k \left \|  f_t - \widehat{f}_t \right \|_{\Hk} \\
    & \hspace{1 cm} - \frac{1}{\nb} \underset{i=1}{\overset{\nb}{\sum}} \langle f_t, \widehat{\mu}_X - \phi(X_i) \rangle_{\Hk} \langle f_t,\phi(Y_i) - \widehat{\mu}_Y  \rangle_{\Hk},
\end{align*}
and from Lemma ~\ref{lambda_t_chap_decompo} 
\begin{align*} 
    \widehat{\lambda}_t &\geq \frac{1}{2\nb} \underset{i=1}{\overset{\nb}{\sum}} \langle \widehat{f}_t, \phi(X_i) - \phi(Y_i)\rangle_{\Hk}^2 - \frac{1}{2} \left \|  \widehat{\mu}_X - \widehat{\mu}_Y \right \|_{\Hk}^2 - 8  M_k \left \|  f_t - \widehat{f}_t \right \|_{\Hk} \\
    & \hspace{1 cm} - \frac{1}{\nb} \underset{i=1}{\overset{\nb}{\sum}} \langle f_t, \widehat{\mu}_X - \phi(X_i) \rangle_{\Hk} \langle f_t,\phi(Y_i) - \widehat{\mu}_Y  \rangle_{\Hk}. 
\end{align*}
The same inequality is clearly satisfied with $f_t$ replaced by $s f_t$ with $s = \sign(\langle f_t, \hat f_t \rangle )$. 
\end{proof}
%%%%%%%%%%
\subsubsection*{Exponential bounds on centered covariance and homogeneous within-group covariance operators.}
The following result about the centered version of the empirical covariance operator is briefly mentioned in \citep{zwald2005convergence} in a comment and used in the work of \citep{blanchard2007statistical} (Section 3.5.) to control reconstruction error. 
\begin{lemma}
	\bf{Centered version of \bf{Lemma 1. of \cite{zwald2005convergence}}\rm} 
	\normalfont
	(see Theorem \ref{Blanchard_non_centered_C})]
Let $n$ be an integer and $Z_1,\ldots,Z_n$ be $n$ independent random variables taking theirs values in a measurable space $\mathcal{Z}$ and following the distribution $\PP$. Let $\phi$ be the feature mapping function from $\mathcal{Z}$ to a reproducing kernel Hilbert space $\Hk$ associated to the kernel function $k$ such that $\phi(z) = k(z,\cdot)$ for all $z \in \mathcal{Z}$. 
Let us define the centered covariance operator $\overline{C} \in \HSk$ and the centered empirical covariance operator $\overline{C_n} \in \HSk$ respectively by 
\begin{align*}
    \overline{C} &= \EE[\left(\phi(Z)-\mu_Z\right)\otimes_{\Hk} \left(\phi(Z)-\mu_Z\right)] \\
    \overline{C_n} &= \frac{1}{n} \underset{i=1}{\overset{n}{\sum}} \left(\phi(Z_i)-\widehat{\mu}_Z\right)\otimes_{\Hk} \left(\phi(Z_i)-\widehat{\mu}_Z\right),
\end{align*}
where   
\begin{equation*}
    \mu_Z = \EE[\phi(Z)] \quad \text{and} \quad \widehat{\mu}_Z = \frac{1}{n} \underset{i=1}{\overset{n}{\sum}} \phi(Z_i). 
\end{equation*}
Assume \A{2} and the simplicity of the eigenvalues of $\overline{C}$. Then, for all $\delta>0$ 
\begin{equation*}
     \PPO \left( \left \| \overline{C_n} - \overline{C} \right \|_{\HSk} \leq \frac{6 M_k}{\sqrt{n}} \left(1+ \sqrt{\frac{\delta}{2}}\right) \right) \geq 1 - e^{-\delta}.
\end{equation*}
\label{Blanchard_centered_C}
\end{lemma}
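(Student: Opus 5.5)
The plan is to reduce the centered case to the non-centered bound of Theorem~\ref{Blanchard_non_centered_C} (Lemma 1 of \cite{zwald2005convergence}) plus a separate control of the mean embedding. I write the uncentered operators as $C=\EE[\phi(Z)\otimes_{\Hk}\phi(Z)]$ and $C_n=\frac1n\sum_i\phi(Z_i)\otimes_{\Hk}\phi(Z_i)$. Expanding the tensor products gives
\[
\overline{C}=C-\mu_Z\otimes_{\Hk}\mu_Z, \qquad \overline{C_n}=C_n-\widehat{\mu}_Z\otimes_{\Hk}\widehat{\mu}_Z .
\]
Hence, by the triangle inequality in $\HSk$,
\[
\|\overline{C_n}-\overline{C}\|_{\HSk}\le \|C_n-C\|_{\HSk}+\|\widehat{\mu}_Z\otimes_{\Hk}\widehat{\mu}_Z-\mu_Z\otimes_{\Hk}\mu_Z\|_{\HSk}.
\]
For the second term I would write $\widehat{\mu}\otimes\widehat{\mu}-\mu\otimes\mu=(\widehat{\mu}-\mu)\otimes\widehat{\mu}+\mu\otimes(\widehat{\mu}-\mu)$. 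Using (\ref{norm_inequality}) together with $\|\mu\|,\|\widehat{\mu}\|\le\sqrt{M_k}$ (Lemma~\ref{kernel_bound} and Jensen), this is bounded by $2\sqrt{M_k}\,\|\widehat{\mu}_Z-\mu_Z\|_{\Hk}$.

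Rather than taking a union bound over two events, which would cost $2e^{-\delta}$, I would treat the whole quantity $g(Z_1,\dots,Z_n)=\|\overline{C_n}-\overline{C}\|_{\HSk}$ with a single application of McDiarmid's inequality (Theorem~\ref{McDiarmid's theo}). This yields the single $e^{-\delta}$ in the statement and mirrors how the non-centered lemma is proved.

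\textbf{Bounded differences.} When $Z_i$ is replaced by $Z_i'$, the term $C_n$ changes by at most $\frac{2M_k}{n}$ in HS norm. The product $\widehat{\mu}\otimes\widehat{\mu}$ changes by at most $2\sqrt{M_k}\cdot\frac{2\sqrt{M_k}}{n}=\frac{4M_k}{n}$. So $c_i\le 6M_k/n$, and possibly less with care; it suffices that $\sum_i c_i^2\le 36M_k^2/n$, or that one can tighten to match the constant below.

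\textbf{Expectation.} By Jensen, $\EE g\le \EE\|C_n-C\|_{\HSk}+2\sqrt{M_k}\,\EE\|\widehat{\mu}-\mu\|_{\Hk}$. Using the variance of i.i.d.\ means in a Hilbert space, $\EE\|C_n-C\|_{\HSk}\le\sqrt{\EE\|\phi\otimes\phi\|^2/n}\le M_k/\sqrt n$ and $\EE\|\widehat{\mu}-\mu\|\le\sqrt{M_k/n}$, so $\EE g\le 3M_k/\sqrt n$. McDiarmid then gives $\PP(g>\EE g+\varepsilon)\le\exp(-2\varepsilon^2/\sum c_i^2)$. Setting the exponent equal to $\delta$ gives a deviation $\varepsilon=\sqrt{\delta\sum c_i^2/2}$. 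With $c_i=6M_k/n$ this is $\frac{6M_k}{\sqrt n}\sqrt{\delta/2}$. Combining, $g\le \frac{3M_k}{\sqrt n}+\frac{6M_k}{\sqrt n}\sqrt{\delta/2}\le\frac{6M_k}{\sqrt n}(1+\sqrt{\delta/2})$ with probability at least $1-e^{-\delta}$.

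\textbf{Main obstacle.} The delicate step is getting the bounded-difference constant to be exactly $6M_k/n$. I have to check that the mean-dependent part and the $C_n$ part combine to at most $6M_k/n$, for instance $2M_k/n+4M_k/n$. If a cruder estimate of the product term gives a larger constant, I would instead exploit $\|\phi(Z_i)-\widehat{\mu}\|\le 2\sqrt{M_k}$ (Lemma~\ref{bound_kernel}) directly on the centered summands. The simplicity assumption on the eigenvalues is not needed for this HS-norm bound; it is only stated for later use.
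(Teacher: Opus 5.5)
Your proposal is correct, and it takes a genuinely different route from the paper's. The paper uses the same decomposition $\overline{C}=C-\mu_Z\otimes_{\Hk}\mu_Z$, $\overline{C_n}=C_n-\widehat{\mu}_Z\otimes_{\Hk}\widehat{\mu}_Z$, and the same bound $2\sqrt{M_k}\,\|\widehat{\mu}_Z-\mu_Z\|_{\Hk}$ for the tensor term. It then splits the threshold as $\frac{2M_k}{\sqrt n}(1+\sqrt{\delta/2})+\frac{4M_k}{\sqrt n}(1+\sqrt{\delta/2})$. For $\|C_n-C\|_{\HSk}$ it invokes Theorem~\ref{Blanchard_non_centered_C} as a black box. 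It applies McDiarmid separately to $\|\widehat{\mu}_Z-\mu_Z\|_{\Hk}$, with $c_i=2\sqrt{M_k}/n$ and the cruder expectation bound $2\sqrt{M_k/n}$. The union bound then gives failure probability $2e^{-\delta}$, and that is what the paper's final display concludes. So the paper's own argument only proves the weaker guarantee $1-2e^{-\delta}$, not the stated $1-e^{-\delta}$.

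Your single application of McDiarmid to $g=\|\overline{C_n}-\overline{C}\|_{\HSk}$ avoids that loss. The bounded difference $c_i=6M_k/n$ holds because $|g(z')-g(z)|\le\|\overline{C_n}(z')-\overline{C_n}(z)\|_{\HSk}\le 2M_k/n+4M_k/n$. The expectation bound $\EE g\le 3M_k/\sqrt n$ is valid: it follows from the variance of i.i.d. Hilbert-space means together with $\|\phi(Z)\otimes_{\Hk}\phi(Z)\|_{\HSk}=k(Z,Z)\le M_k$ and $\EE\|\phi(Z)\|_{\Hk}^2\le M_k$. These give exactly the stated inequality with probability $1-e^{-\delta}$, and even a smaller constant term ($3M_k/\sqrt n$ instead of $6M_k/\sqrt n$). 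The hedge in your last paragraph is therefore unnecessary: the constant $6M_k/n$ is already established by your own computation.

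What the paper's route buys is modularity. The cited non-centered result is reused verbatim, and only the mean embedding needs a new concentration argument. What yours buys is the probability actually claimed in the statement. This matters downstream, since Corollary~\ref{Blanchard_for_SigmaW} applies the lemma to each sample and counts $e^{-\delta}$ per sample to reach $2e^{-\delta}$. You are also right that the simplicity assumption on the eigenvalues plays no role in this Hilbert--Schmidt bound.
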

\begin{proof}
    We have 
    \begin{align*}
        \overline{C} &= \EE\left[\phi(Z) \otimes_{\Hk} \phi(Z) -  \EE[\phi(Z)] \otimes_{\Hk} \phi(Z) -  \phi(Z) \otimes_{\Hk} \EE[\phi(Z)] + \EE[\phi(Z)] \otimes_{\Hk} \EE[\phi(Z)] \right] \\
        &= \EE[\phi(Z) \otimes_{\Hk} \phi(Z)] - \EE[\phi(Z)] \otimes_{\Hk}  \EE[\phi(Z)] - \EE[\phi(Z)] \otimes_{\Hk}  \EE[\phi(Z)] + \EE[\phi(Z)] \otimes_{\Hk} \EE[\phi(Z)] \\
        &= C - \mu_Z \otimes_{\Hk} \mu_Z, \\
        \text{and} \ \ \overline{C_n} &= C_n - \widehat{\mu}_Z  \otimes_{\Hk} \widehat{\mu}_Z.
    \end{align*}
So, by using the triangle inequality and Theorem \ref{Blanchard_non_centered_C} applied on $C$, we get 
\begin{align}
    \PPO &\left( \left \| \overline{C_n} - \overline{C} \right \|_{\HSk} > \frac{6 M_k}{\sqrt{n}} \left(1+ \sqrt{\frac{\delta}{2}}\right) \right) \notag \\
    & \leq \PPO \left( \left \| C_n - C \right \|_{\HSk} +  \left \| \mu_Z \otimes_{\Hk} \mu_Z - \widehat{\mu}_Z  \otimes_{\Hk} \widehat{\mu}_Z \right \|_{\HSk} > \frac{6 M_k}{\sqrt{n}} \left(1+ \sqrt{\frac{\delta}{2}}\right) \right)  \notag \\
    &  \leq  \PPO \left( \left \| C_n - C \right \|_{\HSk} > \frac{2 M_k}{\sqrt{n}} \left(1+ \sqrt{\frac{\delta}{2}}\right) \right) \notag  \\
    & \hspace{1 cm} + \PPO \left( \left \| \mu_Z \otimes_{\Hk} \mu_Z - \widehat{\mu}_Z  \otimes_{\Hk} \widehat{\mu}_Z \right \|_{\HSk} > \frac{4 M_k}{\sqrt{n}} \left(1+ \sqrt{\frac{\delta}{2}}\right) \right)  \notag \\
    &  \leq  e^{-\delta} + \PPO \left( \left \| \mu_Z \otimes_{\Hk} \mu_Z - \widehat{\mu}_Z  \otimes_{\Hk} \widehat{\mu}_Z \right \|_{\HSk} > \frac{4 M_k}{\sqrt{n}} \left(1+ \sqrt{\frac{\delta}{2}}\right) \right).
    \label{first_comput}
\end{align}
Moreover, by applying successively the triangle inequality, formula (\ref{norm_inequality}), Jensen's inequality and Lemma \ref{kernel_bound}, we get 
\begin{align}
    \left \| \mu_Z \otimes_{\Hk} \mu_Z - \widehat{\mu}_Z  \otimes_{\Hk} \widehat{\mu}_Z \right \|_{\HSk} & \leq   \left \| \left(\mu_Z -  \widehat{\mu}_Z  \right) \otimes_{\Hk} \mu_Z \right \|_{\HSk} + \left \| \widehat{\mu}_Z \otimes_{\Hk} \left(\mu_Z -  \widehat{\mu}_Z  \right) \right \|_{\HSk} \notag \\
    & \leq  \left \| \mu_Z -  \widehat{\mu}_Z \right \|_{\Hk}  \left \| \mu_Z \right \|_{\Hk} +  \left \| \widehat{\mu}_Z \right \|_{\Hk}  \left \| \mu_Z -  \widehat{\mu}_Z \right \|_{\Hk} \notag  \\
    & \leq   \left \| \mu_Z -  \widehat{\mu}_Z \right \|_{\Hk} \left(  \EE[\left \|  \phi(Z)\right \|_{\Hk} ] + \frac{1}{n} \underset{i=1}{\overset{n}{\sum}} \left \|  \phi(Z_i)\right \|_{\Hk} \right) \notag \\
    & \leq   2 \sqrt{M_k} \left \| \mu_Z -  \widehat{\mu}_Z \right \|_{\Hk}. 
    \label{2_sqrt_Mk}
\end{align}
We apply Theorem \ref{McDiarmid's theo} in Section \ref{existing_results} (McDiarmid's inequality). For this purpose, we set $\ell : [\mathcal{Z}^{\otimes \nb} \longrightarrow \R]$ defined for all $\left(z_1,\ldots,z_{n}\right) \in \mathcal{Z}^{\otimes n}$ by $$    \ell\left(z_1,\ldots,z_{n}\right) = \left \| \widehat{\mu}_z - \mu_Z \right \|_{\Hk} = \left \| \frac{1}{n} \underset{i=1}{\overset{n}{\sum}}
     \phi(z_i) - \EE[\phi(Z)] \right \|_{\Hk},$$
and we prove that $\ell$ satisfies the bounded differences property (see Definition \ref{def_bounded_difference_property} in Section \ref{existing_results}). 
For $i \in \{1,\ldots,\nb\}$ we define $z_i'$ any copy of $z_i$. Then, by using the triangle inequality and Lemma \ref{kernel_bound}, we get 
\begin{align*}
    & \left | \ell\left(z_1,\ldots,z_{i-1},z_i',z_{i+1},\ldots,z_{n}\right) - \ell\left(z_1,\ldots,z_{n}\right) \right | \\
     &\leq \left |  \left \| \frac{1}{n} \underset{i=1}{\overset{n}{\sum}}
     \phi(z_i) - \EE[\phi(Z)] \right \|_{\Hk} +   \frac{1}{n} \left \| \left( \phi(z_{i'}) - \phi(z_i) \right)\right \|_{\Hk} - \left \| \frac{1}{n} \underset{i=1}{\overset{n}{\sum}}
     \phi(z_i) - \EE[\phi(Z)] \right \|_{\Hk} \right | \\
     & \leq \frac{1}{n} \left | \left \| \phi(z_{i'}) \right \|_{\Hk} + \left \| \phi(z_i) \right \|_{\Hk} \right | \\
     & \leq \frac{2}{n} \sqrt{M_k}.
\end{align*}
Hence, $\ell$ satisfies the bounded differences property with bounds $c_i=\frac{2}{n} \sqrt{M_k}$ for all $i \in \{1,\ldots,n\}$. \\
Moreover, from Jensen's inequality, the independence and identically distributed $Z_i$'s, the triangle inequality and  Lemma \ref{kernel_bound}, we get 
\begin{align*}
    \EE\left[ \left \| \mu_Z -  \widehat{\mu}_Z \right \|_{\Hk} \right] & \leq \sqrt{ \EE\left[ \left \| \mu_Z -  \widehat{\mu}_Z \right \|_{\Hk}^2 \right] } \notag \\
    & = \sqrt{\frac{1}{n} \EE\left[ \left \| \EE[\phi(Z)] - \phi(Z) \right\|_{\Hk}^2\right]} \notag \\
    & \leq 2 \sqrt{\frac{M_k}{n}}.
\end{align*}
From the McDiarmid’s inequality (Theorem \ref{McDiarmid's theo}) applied on $\ell$, we get $\forall \varepsilon >0$ 
\begin{align*}
    \PP \left( \left \|  \mu_Z - \widehat{\mu}_Z \right \|_{\Hk} -  2\sqrt{\frac{M_k}{n}} > \varepsilon  \right) &\leq \PP \left( \left \|  \mu_Z - \widehat{\mu}_Z  \right \|_{\Hk} - \EE\left[ \left \| \mu_Z -  \widehat{\mu}_Z \right \|_{\Hk} \right] > \varepsilon  \right) \\
     & \leq e^{-\frac{2 \varepsilon^2}{n \left(\frac{2 \sqrt{M_k}}{n}\right)^2}}.
\end{align*}
\ti{e.g.} for all $\delta >0$ 
\begin{equation}
      \PP \left( \left \| \mu_Z - \widehat{\mu}_Z \right \|_{\Hk} > 2\sqrt{\frac{M_k}{n}} \left( 1 + \sqrt{\frac{\delta}{2}}\right)  \right) \leq e^{-\delta}. 
      \label{almost_the_end}
\end{equation}
Finally, from (\ref{first_comput}), (\ref{2_sqrt_Mk}) and (\ref{almost_the_end}), we get for all $\delta>0$
\begin{equation*}
    \PPO \left( \left \| \overline{C_n} - \overline{C} \right \|_{\HSk} > \frac{6 M_k}{\sqrt{n}} \left(1+ \sqrt{\frac{\delta}{2}}\right) \right) \leq 2 e^{-\delta}.
\end{equation*}
\end{proof}
\begin{corollary}[Adaptation of Lemma \ref{Blanchard_centered_C} to $\Sigma_W$]
Assume \A{2} and \A{3}. For all $\delta>0$ 
\begin{equation*}
     \PPO \left( \left \| \hS - \SW \right \|_{\HSk} \leq 6 M_k \left( 1 + \sqrt{\frac{\delta}{2}}\right) \frac{\sqrt{n_X} + \sqrt{n_Y}}{n_X + n_Y} \right) \geq 1 - 2e^{-\delta}.
\end{equation*}
\label{Blanchard_for_SigmaW}
\end{corollary}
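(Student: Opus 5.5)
The plan is to write $\hS-\SW$ as a weighted combination of the two within-sample deviations and apply Lemma~\ref{Blanchard_centered_C} to each sample separately. Under $H_0$ and the homoscedastic setting, $\Sigma_X=\Sigma_Y=\SW$. In \eqref{within_covariance_operator} the weight $p$ is the sampling proportion $n_Y/(n_X+n_Y)$, matching \eqref{emprical_SW}. Hence
\[
\hS-\SW=\frac{n_X}{n_X+n_Y}\bigl(\hS_X-\Sigma_X\bigr)+\frac{n_Y}{n_X+n_Y}\bigl(\hS_Y-\Sigma_Y\bigr),
\]
and the triangle inequality in $\HSk$ gives
\[
\|\hS-\SW\|_{\HSk}\le \frac{n_X}{n_X+n_Y}\|\hS_X-\Sigma_X\|_{\HSk}+\frac{n_Y}{n_X+n_Y}\|\hS_Y-\Sigma_Y\|_{\HSk}.
\]

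Next, $\hS_X$ is exactly the centered empirical covariance operator $\overline{C_{n_X}}$ built from the i.i.d. sample $X_1,\ldots,X_{n_X}$, with population counterpart $\overline{C}=\Sigma_X$; the same holds for $Y$. Assumptions \A{2} and \A{3} give the hypotheses of Lemma~\ref{Blanchard_centered_C}. Its proof actually yields failure probability $2e^{-\delta}$, but I would state and use it as $e^{-\delta}$ per sample. This gives, with probability at least $1-e^{-\delta}$ each,
\[
\|\hS_X-\Sigma_X\|_{\HSk}\le \frac{6M_k}{\sqrt{n_X}}\Bigl(1+\sqrt{\tfrac{\delta}{2}}\Bigr),\qquad \|\hS_Y-\Sigma_Y\|_{\HSk}\le \frac{6M_k}{\sqrt{n_Y}}\Bigl(1+\sqrt{\tfrac{\delta}{2}}\Bigr).
\]
A union bound puts both on a common event of probability at least $1-2e^{-\delta}$. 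On that event,
\[
\|\hS-\SW\|_{\HSk}\le 6M_k\Bigl(1+\sqrt{\tfrac{\delta}{2}}\Bigr)\frac{n_X/\sqrt{n_X}+n_Y/\sqrt{n_Y}}{n_X+n_Y}=6M_k\Bigl(1+\sqrt{\tfrac{\delta}{2}}\Bigr)\frac{\sqrt{n_X}+\sqrt{n_Y}}{n_X+n_Y},
\]
which is the claim.

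The main obstacle is the probability bookkeeping. If Lemma~\ref{Blanchard_centered_C} is used in the form its proof delivers (failure $2e^{-\delta}$), the union bound gives $4e^{-\delta}$. To recover $2e^{-\delta}$ I would not apply the lemma as a black box. Instead, inside each sample I would combine the non-centered bound (Theorem~\ref{Blanchard_non_centered_C}) with the mean-embedding deviation bound \eqref{almost_the_end}. The failure budget per sample then has to be handled so that the total across both samples stays $2e^{-\delta}$, for instance by treating the two mean-deviation events jointly via a single McDiarmid argument on the pooled sample. If that is not achievable, I would accept the constant as stated in the lemma, since its statement asserts $1-e^{-\delta}$.

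Under \A{1}, the bound reduces to $\frac{6M_k}{\sqrt{n}}(1+\sqrt{\delta/2})$, which is the event $E_2$ used in the proof of Theorem~\ref{the_theorem}.
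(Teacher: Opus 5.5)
Your proposal matches the paper's proof: the same convex-combination decomposition of $\hS-\SW$ under homoscedasticity, the triangle inequality in $\HSk$, an application of Lemma~\ref{Blanchard_centered_C} to each sample with failure probability $e^{-\delta}$, and a union bound giving $2e^{-\delta}$. Your remark that the lemma's own proof only delivers $2e^{-\delta}$ per sample, so careful bookkeeping would give $4e^{-\delta}$, is accurate; it affects the paper equally, since the paper also uses the lemma at its stated $e^{-\delta}$ constant.
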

\begin{proof}
    From the homoscedastic assumption and the triangle inequality, we get 
    \begin{align*}
        \PPO  &\left( \left \| \widehat{\Sigma}_W - \Sigma_W \right \|_{\HSk} > 6 M_k \left( 1 + \sqrt{\frac{\delta}{2}}\right) \frac{\sqrt{n_X} + \sqrt{n_Y}}{n_X + n_Y} \right) \\
        &\leq  \PPO \left( \frac{n_X}{n_X + n_Y} \left \|  \left( \widehat{\Sigma}_{X} - \Sigma_{X} \right) \right \|_{\HSk} > \frac{6 M_k \sqrt{n_X}}{n_X + n_Y} \left( 1 + \sqrt{\frac{\delta}{2}}\right) \right) + \\
        & \hspace{1 cm} \PPO \left( \frac{n_Y}{n_X + n_Y} \left \| \left( \widehat{\Sigma}_{Y} - \Sigma_{Y} \right) \right \|_{\HSk} > \frac{6 M_k \sqrt{n_Y}}{n_X + n_Y} \left( 1 + \sqrt{\frac{\delta}{2}}\right) \right) \\
        &  \leq 2 e^{-\delta},
    \end{align*}
    where the last inequality provides form Lemma \ref{Blanchard_centered_C} applied on the centered covariance operators $\Sigma_{X}$ and $\Sigma_{Y}$ of $X$ and $Y$ with a $n_X-$ and a $n_Y-$sample respectively. 
\end{proof}
\begin{corollary}
	\bf{Adaptation of Theorem 2. of \cite{zwald2005convergence} and Lemma 5.2. of \cite{koltchinskii2000random} to $\SW$ (see Theorem \ref{Zwald_Blanchard_Theo})\rm}
	\normalfont
Assume \A{2} and \A{3}. Let $t \in \{1,\ldots,T\}$. \\
Then, on the event $\left \{\left \| \hS - \SW \right \|_{\HSk} < \frac{\Delta_t}{2} \right\}$ 
$$ \left \| \Pi_{f_t} - \Pi_{\widehat{f}_t} \right \|_{\HSk} \leq \frac{2 \left \| \hS - \SW \right \|_{\HSk}}{\Delta_t} ,$$ 
\label{P_V_theo}
and the same inequality is valid with $f_t$ replaced by $s f_t$ where $s = \sign( \langle f_t, \widehat{f}_t \rangle)$.
\end{corollary}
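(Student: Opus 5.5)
This follows directly from the perturbation result of Theorem~\ref{Zwald_Blanchard_Theo} (Theorem~2 of \cite{zwald2005convergence}, Lemma~5.2 of \cite{koltchinskii2000random}), applied to $A=\SW$ and the perturbation $B=\hS-\SW$. First, both operators satisfy its hypotheses. $\SW$ is self-adjoint, nonnegative and trace class, with eigenvalues that are simple by \A{3}, so each $f_t$ spans a one-dimensional eigenspace and the gap $\Delta_t$ around $\lambda_t$ is strictly positive for $t\le T$. The perturbation $\hS-\SW$ is self-adjoint and Hilbert--Schmidt, since $\hS$ is finite rank and bounded under \A{2}. The one point needing care is that the paper's $\Delta_t$ is half of the relevant eigenvalue separation, whereas Theorem~\ref{Zwald_Blanchard_Theo} is usually stated with the full gap; I will match these normalizations so that the condition $\|\hS-\SW\|_{\HSk}<\Delta_t/2$ is exactly the smallness condition the theorem requires.

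Under this condition, Weyl's inequality gives $|\widehat{\lambda}_s-\lambda_s|\le\|\hS-\SW\|_{op}\le\|\hS-\SW\|_{\HSk}<\Delta_t/2$ for all $s$. Hence $\widehat{\lambda}_t$ is the unique eigenvalue of $\hS$ in an interval around $\lambda_t$ isolated from $\lambda_{t\pm1}$, and the spectral projector of $\hS$ onto that interval is $\Pi_{\widehat{f}_t}$, which is rank one. Theorem~\ref{Zwald_Blanchard_Theo} then gives the bound $\|\Pi_{f_t}-\Pi_{\widehat{f}_t}\|_{\HSk}\le 2\|\hS-\SW\|_{\HSk}/\Delta_t$. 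If one prefers a self-contained argument, write both projectors as resolvent contour integrals over a circle centered at $\lambda_t$ of radius $\Delta_t$. The difference is then $\frac{1}{2\pi i}\oint (z-\hS)^{-1}(\hS-\SW)(z-\SW)^{-1}\,dz$. Bounding the resolvents on the contour by the inverse distance to the spectra yields a constant of the required form, and the main obstacle is tracking constants so that exactly $2/\Delta_t$ appears.

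The statement for $sf_t$ is immediate, because $\Pi_{sf_t}=(sf_t)\otimes_{\Hk}(sf_t)=s^2 f_t\otimes_{\Hk} f_t=\Pi_{f_t}$ since $s^2=1$. This sign choice is what later lets the paper pass to vectors: with $s=\sign\langle f_t,\widehat{f}_t\rangle$, one has $\|sf_t-\widehat{f}_t\|_{\Hk}^2=2-2|\langle f_t,\widehat{f}_t\rangle|\le 2-2\langle f_t,\widehat{f}_t\rangle^2=\|\Pi_{f_t}-\Pi_{\widehat{f}_t}\|_{\HSk}^2$. The inequality uses $|\langle f_t,\widehat{f}_t\rangle|\le1$, and the last equality expands the Hilbert--Schmidt norm of a difference of two rank-one projectors.
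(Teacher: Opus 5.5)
Your argument follows the paper's proof. Both apply the projector bound of Theorem~\ref{Zwald_Blanchard_Theo} with $A=\SW$ and $B=\hS-\SW$, after an eigenvalue-perturbation check on $\hS$: you use Weyl, the paper uses Hoffmann--Wielandt. Both handle $sf_t$ by noting it is again an eigenfunction, and your identity $\Pi_{sf_t}=\Pi_{f_t}$ is the cleaner way to say it.

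Two side remarks. First, there is nothing to renormalize for $t<T$: the gap in Theorem~2 of \cite{zwald2005convergence} is already the half-gap $\frac12\min(\lambda_{t-1}-\lambda_t,\lambda_t-\lambda_{t+1})$, which is the paper's $\Delta_t$. Second, your resolvent alternative closes with no constant chasing. On the circle $|z-\lambda_t|=\Delta_t$ you have $\|(z-\SW)^{-1}\|_{\mathrm{op}}=1/\Delta_t$ and $\|(z-\hS)^{-1}\|_{\mathrm{op}}<2/\Delta_t$, and the contour has length $2\pi\Delta_t$. The second resolvent identity then gives exactly $2\|\hS-\SW\|_{\HSk}/\Delta_t$. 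This route is actually preferable. It uses only the isolation of $\lambda_t$, whereas Theorem~\ref{Zwald_Blanchard_Theo} as quoted needs all nonzero eigenvalues of $\hS$ to be simple. Neither you nor the paper verifies that; the paper only checks $\widehat\lambda_t>\widehat\lambda_{t+1}$.

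The genuine problem is at $t=T$. The paper defines $\Delta_T=\frac12\min(\lambda_T,\lambda_{T-1}-\lambda_T)$, with $\lambda_T$ where $\lambda_T-\lambda_{T+1}$ should be. So $\|\hS-\SW\|_{\HSk}<\Delta_T/2$ does not separate $\widehat\lambda_T$ from $\lambda_{T+1}$. Your step ``isolated from $\lambda_{t\pm1}$'' fails there, and the circle of radius $\Delta_T$ may enclose $\lambda_{T+1}$.

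The perturbation inequality itself is then false at $t=T$. Take $\lambda_{T-1}=3$, $\lambda_T=1$, $\lambda_{T+1}=1-\varepsilon$, so $\Delta_T=1/2$. Take $B=b\,(f_T\otimes_{\Hk}f_{T+1}+f_{T+1}\otimes_{\Hk}f_T)$ with $0<\varepsilon\ll b<1/(4\sqrt2)$, so that $\|B\|_{\HSk}=\sqrt2\,b<\Delta_T/2$. Then $\widehat f_T$ makes an angle close to $\pi/4$ with $f_T$, so $\|\Pi_{f_T}-\Pi_{\widehat f_T}\|_{\HSk}$ is close to $1$, while $2\|B\|_{\HSk}/\Delta_T=4\sqrt2\,b$ is small.

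The bound therefore holds for $t<T$, and for $t=T$ only if $\lambda_T$ is replaced by $\lambda_T-\lambda_{T+1}$ in $\Delta_T$. The paper's proof has the same defect: its step $\widehat\lambda_t-\widehat\lambda_{t+1}>0$ uses $\lambda_t-\lambda_{t+1}\ge 2\Delta_t$, which fails at $t=T$. So this is a flaw in the statement rather than in your strategy. Still, your claim to have matched the normalizations to the theorem's smallness condition is exactly the check that should have caught it.
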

\begin{proof}
    Let $t \in \{1,\ldots,T\}$. From assumption \A{3}, $\SW$ is a symmetric positive Hilbert-Schmidt operator of $\Hk$ with simple positive eigenvalues. 
    So, from the Hoffmann-Wielandt inequality \citep{bhatia1994hoffman} (see Proposition \ref{Hoffmann_inequality} in Section \ref{existing_results}), we get on the event $\left \{\left \| \hS - \SW \right \|_{\HSk} < \frac{\Delta_t}{2} \right\}$ 
    $$ | \widehat{\lambda}_t - \lambda_t | \leq \| \widehat{\Sigma}_W - \Sigma_W \|_{\Hk} < \frac{\Delta_t}{2},$$ and so 
    \begin{align*}
        \widehat{\lambda}_t & \geq \lambda_t - |\widehat{\lambda}_t - \lambda_t| > 2 \Delta_t - \Delta_t = \Delta_t > 0 \\
        \text{and} \quad \widehat{\lambda}_t - \widehat{\lambda}_{t+1} & \geq - | \widehat{\lambda}_t - \lambda_t | + 2 \Delta_t - | \lambda_{t+1} - \widehat{\lambda}_{t+1} | > - \Delta_t + 2 \Delta_t- \Delta_t = 0.
    \end{align*}
    Hence, $\hS$ is also a symmetric positive Hilbert-Schmidt operator of $\Hk$ with simple positive eigenvalues. 
    The final result is a direct application of Theorem 2 in \citep{zwald2005convergence} with $A = \SW$ and $A + B=\hS$.
    We easily check that the inequality is satisfied with  $f_t$ replaced by $s f_t$ where $s = \sign( \langle f_t, \widehat{f}_t \rangle)$, as $s f_t$ is also an eigenfunction of $\SW$.
\end{proof}
%%%%%%%%%%%%%%%%
\subsubsection*{About expression of $x$}
\begin{lemma}
\label{HP_positivity}
Assume (\ref{lambda_t_condition}) and \A{3}. For all $\delta >0$ and for all $t \in \{1,\ldots,T\}$, we have:
$$  2 \lambda_t - 8 M_k \sqrt{\frac{\delta}{\nb}}> 0,\quad \Delta_t \sqrt{2 \lambda_t - 8 M_k \sqrt{\frac{\delta}{\nb}}} > 0,\quad x > 0$$
and
$$\frac{2\lambda_t - K_{1,t}\left(\nb,M_k,\delta,\Delta_t \right) }{2\lambda_t - 8 M_k \sqrt{\frac{\delta}{\nb}}}>0.$$
%\begin{align*}
%    2 \lambda_t - 8 M_k \sqrt{\frac{\delta}{\nb}} &> 0, \\
%        \Delta_t \sqrt{2 \lambda_t - 8 M_k \sqrt{\frac{\delta}{\nb}}} &> 0, \\
%        \frac{2\lambda_t - K_{1,t}\left(\nb,M_k,\delta,\Delta_t \right) }{2\lambda_t - 8 M_k \sqrt{\frac{\delta}{\nb}}} &> 0 \\
%        \text{and} \quad x &> 0.
%\end{align*}
\end{lemma}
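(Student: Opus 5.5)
The plan is to derive each claim directly from (\ref{lambda_t_condition}) by comparing the terms of $K_{1,t}$ with the subtracted quantity $8M_k\sqrt{\delta/\nb}$, using only elementary sign arguments.

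\textbf{First inequality.} Write
\[
K_{1,t} = 4M_k\Bigl(1+2\tfrac{\nb-1}{\nb}\Bigr)\sqrt{\tfrac{\delta}{\nb}} + P_t,
\]
where the remainder $P_t$ collects the $\Delta_t^{-1}$-term and the $\tfrac{2M_k}{\nb}(2+\sqrt{\delta})^2$ term. Since $\Delta_t>0$ by \A{3}, each of these pieces is nonnegative, so $P_t\ge 0$ and $K_{1,t}\ge 4M_k\sqrt{\delta/\nb}$. Condition (\ref{lambda_t_condition}) then gives $\lambda_t > 4M_k\sqrt{\delta/\nb}$, which is exactly $2\lambda_t - 8M_k\sqrt{\delta/\nb}>0$.

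\textbf{Second inequality.} Under \A{3} the gap $\Delta_t$ is strictly positive. It is also finite, because $\lambda_T>0$ follows from the first step. Multiplying $\Delta_t$ by the square root of the positive quantity from the first step therefore gives $\Delta_t\sqrt{2\lambda_t - 8M_k\sqrt{\delta/\nb}}>0$.

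\textbf{Ratio inequality.} The denominator is positive by the first step, so it is enough to show the numerator is positive, i.e. $2\lambda_t > K_{1,t}$. Because $K_{1,t}\ge 0$, we have $2\lambda_t > \lambda_t > K_{1,t}$ directly from (\ref{lambda_t_condition}). If instead the intended numerator is the one that actually appears in the proof, namely
\[
2\lambda_t - \Bigl(8M_k\sqrt{\tfrac{\delta}{\nb}} + \tfrac{48\sqrt2 M_k^2(1+\sqrt{2\delta})(1+\sqrt{\delta/2})}{\nb\Delta_t} + 2R_{\nb}\Bigr),
\]
I would check that this bracket equals $2K_{1,t}$ once the factors $\tfrac{\nb-1}{\nb}$ are reinstated. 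Term by term:
\begin{itemize}
\item $8+16\tfrac{\nb-1}{\nb}$ matches the $\sqrt{\delta/\nb}$ coefficients of $2K_{1,t}$;
\item $\tfrac{48\sqrt2}{\nb\Delta_t}$ and $\tfrac{192}{\sqrt{\nb}\,\Delta_t}$ match the two parts of the $24M_k^2/\Delta_t$ term;
\item $\tfrac{4M_k}{\nb}(2+\sqrt\delta)^2$ matches the last term.
\end{itemize}
The numerator is then $2(\lambda_t - K_{1,t})>0$ by (\ref{lambda_t_condition}).

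\textbf{Positivity of $x$.} This follows from the explicit formula (\ref{x_bound}). The prefactor $2T>0$. Each ratio inside the $\max$ is positive, since $\lambda_t - 4M_k\sqrt{\delta/\nb}>0$ by the first step and $\lambda_t-K_{1,t}>0$ by (\ref{lambda_t_condition}). The squared factor is at least $\delta>0$, because $K_2\ge 0$ and the minimum in its denominator is positive by the second step.

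The only real obstacle is the bookkeeping in the ratio step: matching the constants of $K_{1,t}$ with the expression generated in the proof, including the $\tfrac{\nb-1}{\nb}$ factors. Everything else is immediate sign checking.
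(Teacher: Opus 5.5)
Your proposal is correct and follows the paper's argument: the paper writes out (the doubled) $K_{1,t}$ as $8M_k\sqrt{\delta/n}$ plus nonnegative terms and reads off every positivity claim from (\ref{lambda_t_condition}). Your term-by-term matching of the bracket with $2K_{1,t}$, after restoring the $\frac{n-1}{n}$ factors, correctly identifies the factor-of-two normalization that the paper's proof uses without comment.
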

\begin{proof}
Assume (\ref{lambda_t_condition}) and \A{3}. Let $\delta>0$ and $t \in \{1,\ldots,T\}$. It suffices to note that 
    \begin{align*}
        K_{1,t}\left(\nb,M_k,\delta,\Delta_t \right)  &= 8 M_k \sqrt{\frac{\delta}{\nb}} + 16 M_k \frac{\nb-1}{\nb} \sqrt{\frac{\delta}{\nb}} \\
        &+ \left(\frac{48 \sqrt{2} M_k^2 \left(1 +\sqrt{2\delta} \right)}{\nb \Delta_t} +  \frac{192 M_k^2}{\Delta_t \sqrt{\nb}} \left(\frac{\nb-1}{\nb}\right)^2 \right) \left( 1+ \sqrt{\frac{\delta}{2}} \right)  + \frac{4 M_k}{\nb} \left(2 + \sqrt{\delta} \right)^2,
    \end{align*}
    which simply implies positivity of all the terms.
\end{proof}
\begin{lemma}
\label{condition_Hoeffding}
Assume (\ref{lambda_t_condition}) and \A{3}. For all $\delta >0$ and for all $t \in \{1,\ldots,T\}$,
\begin{equation*}
    \sqrt{\frac{x}{2T}} \sqrt{\frac{2\lambda_t - K_{1,t}\left(\nb,M_k,\delta,\Delta_t \right) }{2\lambda_t - 8 M_k \sqrt{\frac{\delta}{\nb}}}} - \frac{K_2 \left( \nb,M_k,\delta \right)}{\Delta_t \sqrt{2\lambda_t - 8 M_k \sqrt{\frac{\delta}{\nb}}}} > 0.
\end{equation*}
\end{lemma}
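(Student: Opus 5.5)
We argue directly from the definition of $x=\qmaj$ in (\ref{x_bound}), written in the normalisation used in the proof of Theorem~\ref{the_theorem}: $\lambda_t$ and $4M_k\sqrt{\delta/\nb}$ are replaced by $2\lambda_t$ and $8M_k\sqrt{\delta/\nb}$, and $K_{1,t}$ is the expanded form given in the proof of Lemma~\ref{HP_positivity}. These changes only multiply numerators and denominators by the same factor $2$, so ratios are unchanged. For each $t$ put
$$a_t:=\frac{2\lambda_t - K_{1,t}}{2\lambda_t - 8M_k\sqrt{\delta/\nb}},\qquad m:=\min_{s}\Big\{\Delta_s\sqrt{2\lambda_s-8M_k\sqrt{\delta/\nb}}\Big\}.$$
By Lemma~\ref{HP_positivity}, under (\ref{lambda_t_condition}) and \A{3}, each $a_t>0$, each $\Delta_t\sqrt{2\lambda_t - 8M_k\sqrt{\delta/\nb}}>0$, and hence $m>0$. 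With this notation,
$$x = 2T\,\Big(\min_s a_s\Big)^{-1}\Big(\sqrt{\delta}+\frac{K_2}{m}\Big)^2 ,$$
since $\max_s a_s^{-1}=(\min_s a_s)^{-1}$.

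The key step is to substitute this expression. Since every quantity inside the square is positive, the square root can be taken directly:
$$\sqrt{\frac{x}{2T}}\sqrt{a_t}=\Big(\sqrt{\delta}+\frac{K_2}{m}\Big)\sqrt{\frac{a_t}{\min_s a_s}}\;\geq\;\sqrt{\delta}+\frac{K_2}{m}.$$
The inequality holds because $a_t\ge \min_s a_s$ for every $t$. Moreover, $m\le \Delta_t\sqrt{2\lambda_t-8M_k\sqrt{\delta/\nb}}$ by definition of the minimum, so
$$\frac{K_2}{\Delta_t\sqrt{2\lambda_t-8M_k\sqrt{\delta/\nb}}}\le \frac{K_2}{m}.$$
Subtracting the second display from the first, the quantity in the statement is bounded below by $\sqrt{\delta}>0$, because $\delta>0$. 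This is exactly the strict positivity required.

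The only delicate point is bookkeeping: the $K_{1,t}$ and $K_2$ in the lemma must be the same objects that appear in $x$, up to the common factor $2$ in both the $\lambda$ term and the $M_k$ term. Concretely, $K_{1,t}$ must be the sum of $8M_k\sqrt{\delta/\nb}$ with the remaining terms in $R_{\nb}$, and $K_2$ must be the coefficient $\frac{12M_k^{3/2}}{\sqrt{\nb}}(1+\sqrt{2\delta})(1+\sqrt{\delta/2})$. I would check that these rescalings match term by term. If they match only up to the $(\nb-1)/\nb$ factors, I would note that replacing those factors by $1$ enlarges $K_{1,t}$, which decreases $a_t$ for the affected $t$. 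The bound $\sqrt{x/(2T)}\sqrt{a_t}\ge\sqrt\delta+K_2/m$ nevertheless survives, because $x$ is built from the same convention and the ratio $a_t/\min_s a_s\ge1$ persists.
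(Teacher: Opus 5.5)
Your proof is correct and follows the paper's argument: both rest on $a_t \ge \min_s a_s$ (equivalently, the $\max_t$ of the inverse ratio in $x$ dominates the $t$-th term) and on $m \le \Delta_t\sqrt{2\lambda_t - 8M_k\sqrt{\delta/\nb}}$, which together bound the quantity below by $\sqrt{\delta}>0$. The paper writes the same chain in reverse, starting from $\sqrt{\delta}$ and adding and subtracting the $K_2$ term.
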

\begin{proof}
Assume (\ref{lambda_t_condition}) and \A{3} implying Lemma \ref{HP_positivity} and positivity of the numerator and denominator terms below.
Let $\delta>0$ and $t \in \{1,\ldots,T\}$. 
For ease of presentation, we simplify the notations $K_{1,t}\left(\nb,M_k,\delta,\Delta_t \right)$ by $K_{1,t}$ and $K_2 \left( \nb,M_k,\delta \right)$ by $K_2$. 
\begin{align*}
    \sqrt{\delta} &= \sqrt{\delta} + \frac{K_2}{\Delta_t \sqrt{2\lambda_t - 8 M_k \sqrt{\frac{\delta}{\nb}}}} - \frac{K_2}{\Delta_t \sqrt{2\lambda_t - 8 M_k \sqrt{\frac{\delta}{\nb}}}} \\
    &\leq \sqrt{\delta} + \frac{K_2}{\underset{t=\{1,\ldots,T\}}{\min} \left\{\Delta_t \sqrt{2\lambda_t - 8 M_k \sqrt{\frac{\delta}{\nb}}} \right\}} - \frac{K_2}{\Delta_t \sqrt{2\lambda_t - 8 M_k \sqrt{\frac{\delta}{\nb}}}} \\
    & = \sqrt{\frac{2\lambda_t - 8 M_k \sqrt{\frac{\delta}{\nb}}}{2\lambda_t - K_{1,t} }} \left(\sqrt{\delta} + \frac{K_2}{\underset{t=\{1,\ldots,T\}}{\min} \left\{\Delta_t \sqrt{2\lambda_t - 8 M_k \sqrt{\frac{\delta}{\nb}}} \right\}} \right) \sqrt{\frac{2\lambda_t - K_{1,t} }{2\lambda_t - 8 M_k \sqrt{\frac{\delta}{\nb}}}}  - \frac{K_2}{\Delta_t \sqrt{2\lambda_t - 8 M_k \sqrt{\frac{\delta}{\nb}}}}.
    \end{align*}
    Therefore,
   \begin{align*}
    \sqrt{\delta}&\leq \underset{t=1,\ldots,T}{\max}  \left(\sqrt{\frac{2\lambda_t - 8 M_k \sqrt{\frac{\delta}{\nb}}}{2\lambda_t - K_{1,t} }}  \right) \left(\sqrt{\delta} + \frac{K_2}{\underset{t=\{1,\ldots,T\}}{\min} \left\{\Delta_t \sqrt{2\lambda_t - 8 M_k \sqrt{\frac{\delta}{\nb}}} \right\}} \right) \sqrt{\frac{2\lambda_t - K_{1,t} }{2\lambda_t - 8 M_k \sqrt{\frac{\delta}{\nb}}}} \\
    & \hspace{10 cm} - \frac{K_2}{\Delta_t \sqrt{2\lambda_t - 8 M_k \sqrt{\frac{\delta}{\nb}}}} \\
    & = \sqrt{\frac{x}{2T}} \sqrt{\frac{2\lambda_t - K_{1,t} }{2\lambda_t - 8 M_k \sqrt{\frac{\delta}{\nb}}}} - \frac{K_2}{\Delta_t \sqrt{2\lambda_t - 8 M_k \sqrt{\frac{\delta}{\nb}}}}.
\end{align*}
\normalsize
\end{proof}
%%%%%%%%%%%%%%%%%%%%%%
%%%%%%%%%%%%%%%%%%%%%%
\section{Proofs of the asymptotic results}\label{la_section_proof_asympt}
This section is devoted to prove asymptotic results of Sections~\ref{sec:Discussion:asym} and~\ref{sec:simplified:asym}. 
\subsection{Notations}
In this section, we use the following notations. For two  positive functions  $A(n,M_k,\delta,\Delta_t)$ and $B(n,M_k,\delta,\Delta_t)$, depending on $n$, $M_k$, $\delta$ and $\Delta_t$, we denote
$$A(n,M_k,\delta,\Delta_t)\ll B(n,M_k,\delta,\Delta_t) \quad \mbox{for}\quad \limsup_{n\to+\infty}\frac{A(n,M_k,\delta,\Delta_t)}{B(n,M_k,\delta,\Delta_t)}=0,$$
$$A(n,M_k,\delta,\Delta_t)\lesssim B(n,M_k,\delta,\Delta_t)  \quad \mbox{for}\quad\limsup_{n\to+\infty}\frac{A(n,M_k,\delta,\Delta_t)}{B(n,M_k,\delta,\Delta_t)}\leq C_1,$$
with $C_1$ not depending on $M_k$  and $\Delta_t$.
 We also denote
$A(n,M_k,\delta,\Delta_t)\gtrsim B(n,M_k,\delta,\Delta_t)$ if $A(n,M_k,\delta,\Delta_t)^{-1}\lesssim B(n,M_k,\delta,\Delta_t)^{-1}$, $A(n,M_k,\delta,\Delta_t)\gg B(n,M_k,\delta,\Delta_t)$ if $A(n,M_k,\delta,\Delta_t)^{-1}\ll B(n,M_k,\delta,\Delta_t)^{-1}$ and 
$$A(n,M_k,\delta,\Delta_t)\approx B(n,M_k,\delta,\Delta_t)$$ for $$A(n,M_k,\delta,\Delta_t)\lesssim B(n,M_k,\delta,\Delta_t) \mbox{ and } B(n,M_k,\delta,\Delta_t)\lesssim A(n,M_k,\delta,\Delta_t).$$
We simplify the notation $\Tilde{x}\left(\nb,\delta,M_k,\lambda_{1:T},\Delta_{1:T}\right)$ by $\Tilde{x}$.
 %%%%%%%%%%%%%%%
 \subsection{Proof of Proposition~\ref{asymptotic_regime_with_delta_condition}}\label{sec:proofProp1}
Remember that $\delta$ depends on $n$ with $\delta\equiv \delta(n)\to+\infty$ and $\delta(n)/n\to 0$ when $n\to+\infty$. We also recall that $M_k$ does not depend on $n$ but $\Delta_t$ satisfies Condition (\ref{lambda_t_condition}) so that it may depend on $n$.
Assume that \A{1}$\sim$\A{3} and the gap condition (\ref{A4bis}) are satisfied. We recall (see Theorem~\ref{the_theorem}) that for all $t \in \{1,\ldots,T\}$
    \begin{align*}
        K_{1,t}\left(\nb,M_k,\delta,\Delta_t \right) 
         &= 4 M_k \left(1+ 2 \frac{\nb-1}{\nb} \right) \sqrt{\frac{\delta}{\nb}} \ \notag \\
        & \hspace{-0.5 cm} + \frac{24 M_k^2}{\Delta_t} \left(\frac{\sqrt{2} \left(1 +\sqrt{2\delta} \right)}{\nb } +  \frac{4}{\sqrt{\nb}} \left(\frac{\nb-1}{\nb}\right)^2 \right) \left( 1+ \sqrt{\frac{\delta}{2}} \right)  + \frac{2 M_k}{\nb} \left(2 + \sqrt{\delta} \right)^2  \notag \\
         K_2 \left( \nb,M_k,\delta \right)  &= \frac{12 M_k^{\frac{3}{2}}}{\sqrt{2 \nb}} \left(1 + \sqrt{2\delta}\right) \left(1 + \sqrt{\frac{\delta}{2}} \right).
    \end{align*}
Therefore, since $\delta\gtrsim 1$,
\begin{align*}
K_{1,t}\left(\nb,M_k,\delta,\Delta_t \right) 
         &\approx M_k\sqrt{\frac{\delta}{\nb}} + \frac{M_k^2\sqrt{\delta}}{\Delta_t} \left(\frac{\sqrt{\delta}}{\nb } +  \frac{1}{\sqrt{\nb}}  \right)  + \frac{M_k\delta}{\nb}
\end{align*}
and
\begin{align}\label{K2}
K_2 \left( \nb,M_k,\delta \right)  &\approx  \frac{M_k^{\frac{3}{2}}\delta}{\sqrt{n}}.
\end{align}
Now, using $\delta\ll n$, $1\lesssim M_k$ and $\Delta_t\lesssim 1$, we have
\begin{align}\label{K1}
K_{1,t}\left(\nb,M_k,\delta,\Delta_t \right) 
         &\approx \frac{M_k^2\sqrt{\delta}}{\Delta_t} \left(\frac{\sqrt{\delta}}{\nb } +  \frac{1}{\sqrt{\nb}}  \right)\approx  \frac{M_k^2}{\Delta_t} \sqrt{\frac{\delta}{\nb}}.
\end{align}
Hence, condition (\ref{lambda_t_condition}), namely
$$ \lambda_t > K_{1,t}\left(\nb,M_k,\delta,\Delta_t \right),$$
becomes
\begin{equation}\label{mino1}
\lambda_t \Delta_t \gtrsim M_k^2 \sqrt{\frac{\delta}{\nb}}.
\end{equation}
Furthermore, we get
$$ \lambda_{t-1} = \sqrt{\lambda_{t-1}^2} \geq \sqrt{\lambda_t \lambda_{t-1}} \geq \sqrt{\lambda_t \Delta_t} \gtrsim M_k \left( \frac{\delta}{\nb} \right)^{\frac{1}{4}}.$$
It implies 
$$ \lambda_{t}\gg M_k \left( \frac{\delta}{\nb} \right)^{\frac{1}{2}},$$
and, for any $t\in\{1,\ldots,T\},$
$$\Delta_t \sqrt{\lambda_t - 4 M_k \sqrt{\frac{\delta}{\nb}}}= \sqrt{\Delta_t^2 \left(\lambda_t - 4 M_k \sqrt{\frac{\delta}{\nb}} \right)}\gtrsim \sqrt{\Delta_t^2 \lambda_t}.$$
Now, we have
$$\Delta_t \sqrt{\lambda_t - 4 M_k \sqrt{\frac{\delta}{\nb}}} \gtrsim \sqrt{\Delta_t^2 \frac{M_k^2}{\Delta_t} \sqrt{\frac{\delta}{\nb}}} \gtrsim M_k^2\sqrt{\frac{\delta}{\nb}} \approx \frac{M_k^{1/2}K_2 \left( \nb,M_k,\delta \right)}{\sqrt{\delta}},$$
where we have used \eqref{mino1}  two times to get inequalities and \eqref{K2} to get the approximation.
Since $M_k\gtrsim 1$, we then have that
$$\Delta_t \sqrt{\lambda_t - 4 M_k \sqrt{\frac{\delta}{\nb}}} \geq c \frac{K_2 \left( \nb,M_k,\delta \right)}{\sqrt{\delta}},$$
for $c$ a positive constant not depending on $M_k$, $\nb$, $\lambda_{1:T}$ and $\Delta_{1:T}$. 
This proves that condition (\ref{lambda_t_condition}) implies condition (\ref{K_3_simplified}). \\
Lastly, Corollary~\ref{bound_simple} gives an approximation of the quantile to be taken as
$$ \qmajbis = 2\left(1+c^{-1}\right)^2 T \delta  \underset{t=1,\ldots,T}{\max}  \left(\frac{\lambda_t - 4 M_k \sqrt{\frac{\delta}{\nb}}}{\lambda_t - K_{1,t}\left(\nb,M_k,\delta,\Delta_t \right) }  \right). $$
Using \eqref{K1}, we obtain
$$ \qmajbis= 2\left(1+c^{-1}\right)^2 T \delta  \underset{t=1,\ldots,T}{\max}  \left(\frac{\lambda_t - 4 M_k \sqrt{\frac{\delta}{\nb}}}{\lambda_t - \frac{\kappa M_k^2}{\Delta_t}\sqrt{\frac{\delta}{\nb}}} \right),$$ for a bounded constant $\kappa >0$ independent of $M_k$, $\nb$, $\lambda_{1:T}$ and $\Delta_{1:T}$, which concludes the proof of Proposition~\ref{asymptotic_regime_with_delta_condition}.

\section{Additional figures}
\label{sec:additional_figures}

\begin{figure}
	\begin{center}
		\includegraphics[scale=0.7]{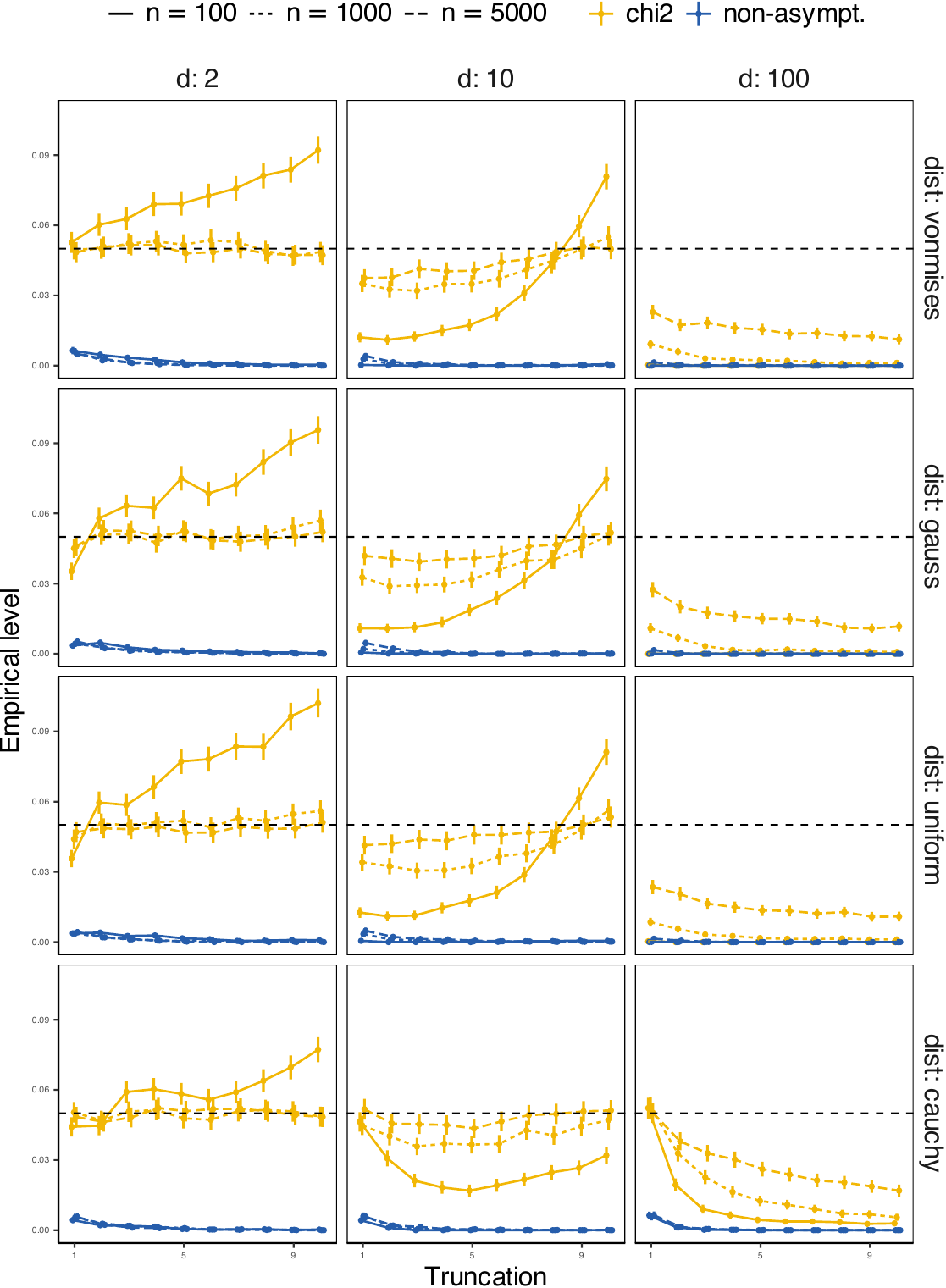}
		\caption{Average empirical level (and 95\% confidence interval) of the st-nMMD test, for varying truncations $T$, for the test based on the asymptotic $\chi^2$ approximation and our non-asymptotic bound, using the $\max$ to compute the quantile (instead of the mean). The test is performed at a nominal level $\alpha=0.05$ (black dashed horizontal line), for 4 different distributions and 3 different dimensions $d \in \{2, 10,100\}$. \label{Fig:level_simulations_T_max_005}}
	\end{center}
\end{figure}

\begin{figure}
	\begin{center}
		\includegraphics[scale=0.7]{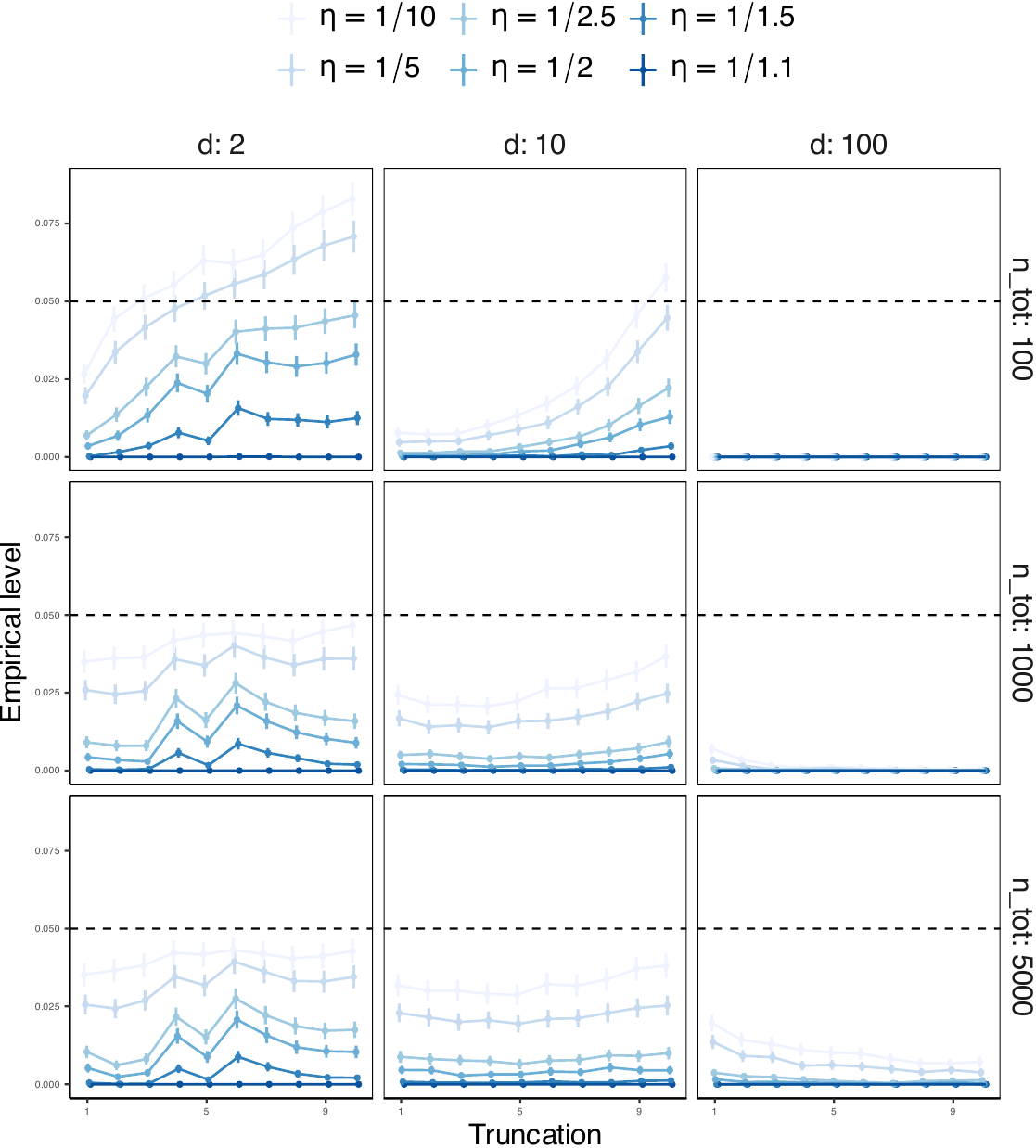}
	\end{center}
	\caption{Average empirical level (and 95\% confidence interval) of the st-nMMD test, for varying truncations $T$, for the test based on the our non-asymptotic bound, with varying hyperparameter $\eta$. The test is performed at a nominal level $\alpha=0.05$ (black dashed horizontal line), for the Gaussian distribution with 3 different dimensions $d \in \{2, 10,100\}$. \label{Fig:level_simulations_T_eta_005}}
\end{figure}
%%%%%%%%%%%%%%%%%%%%%
%%%%%%%%%%%%%%%%%%%%%

\newpage
\section{Existing results}\label{existing_results}
%%%%%%%%%%%
\subsubsection*{Hoeffding and McDiarmid's concentration inequalities}
\begin{theorem}[\bf{Hoeffding's inequality} \cite{hoeffding1994probability}]
\label{Hoeffding_inequality}
Let $n$ be an integer and $X_1,\ldots,X_n$ be a sequence of independent random variables such that $\P\left( a_i \leq X_i \leq b_i \right) =1$ for $\left(a_i\right)_{i \in \{1,\ldots,n\}}$ and $\left(b_i\right)_{i \in \{1,\ldots,n\}}$ two sequences of real values such that $a_i < b_i$. Let us define $S_n = X_1 + \ldots + X_n$. For all $t>0$ 
\begin{equation*}
    \P \left( S_n - \mathbb{E}[S_n] > t \right) \leq e^{-\frac{2 t^2}{\underset{i=1}{\overset{n}{\sum}} \left(b_i - a_i \right)^2}}.
\end{equation*}
\end{theorem}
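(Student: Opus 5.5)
The plan is the classical Chernoff--Cramér argument combined with Hoeffding's lemma on the moment generating function of a bounded centered variable. Write $Y_i = X_i - \mathbb{E}[X_i]$, so that $S_n - \mathbb{E}[S_n] = \sum_{i=1}^n Y_i$. Each $Y_i$ is centered and lies almost surely in an interval $[a_i - \mathbb{E}[X_i], b_i - \mathbb{E}[X_i]]$ of length $b_i - a_i$. For any $s>0$, Markov's inequality applied to $e^{s(S_n-\mathbb{E}[S_n])}$, together with independence, gives
\begin{equation*}
\P\left(S_n - \mathbb{E}[S_n] > t\right) \leq e^{-st} \prod_{i=1}^n \mathbb{E}\left[e^{sY_i}\right].
\end{equation*}

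The key step, and the only non-routine one, is Hoeffding's lemma: if $Y$ is centered with $\alpha \leq Y \leq \beta$ almost surely, then $\mathbb{E}[e^{sY}] \leq e^{s^2(\beta-\alpha)^2/8}$. I would prove it as follows.

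\textbf{Convexity step.} By convexity of $y \mapsto e^{sy}$ on $[\alpha,\beta]$,
\begin{equation*}
e^{sY} \leq \frac{\beta - Y}{\beta-\alpha} e^{s\alpha} + \frac{Y-\alpha}{\beta-\alpha} e^{s\beta}.
\end{equation*}
Taking expectations and using $\mathbb{E}[Y]=0$ gives $\mathbb{E}[e^{sY}] \leq e^{L(u)}$. Here $u = s(\beta-\alpha)$, $p=-\alpha/(\beta-\alpha)\in[0,1]$, and $L(u) = -pu + \log(1-p+pe^{u})$.

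\textbf{Taylor step.} One checks $L(0)=0$ and $L'(0)=0$. Moreover $L''(u) = q(1-q)$ with $q = pe^u/(1-p+pe^u)\in[0,1]$, so $L''(u)\leq 1/4$. Taylor's theorem with Lagrange remainder then yields $L(u)\leq u^2/8$.

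\textbf{Optimisation step.} Plugging the lemma into the Chernoff bound gives
\begin{equation*}
\P\left(S_n - \mathbb{E}[S_n] > t\right) \leq \exp\left(-st + \frac{s^2}{8}\sum_{i=1}^n (b_i-a_i)^2\right)
\end{equation*}
for every $s>0$. Minimising the quadratic exponent in $s$, the choice $s = 4t/\sum_i (b_i-a_i)^2$ (positive since $a_i<b_i$) gives the exponent $-2t^2/\sum_i (b_i-a_i)^2$. This is the claimed bound.

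The main obstacle is the uniform bound $L''\leq 1/4$ in Hoeffding's lemma. It is handled by recognising $L''$ as the variance $q(1-q)$ of a Bernoulli variable, which is at most $1/4$. Everything else is a direct Markov/independence computation followed by a one-variable optimisation.
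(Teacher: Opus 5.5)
Your proof is correct and complete: the Chernoff bound, Hoeffding's lemma via the convexity reduction to $L(u)=-pu+\log(1-p+pe^{u})$ with $L(0)=L'(0)=0$ and $L''=q(1-q)\le 1/4$, and the choice $s=4t/\sum_i(b_i-a_i)^2$ all check out. The paper gives no proof of its own and simply quotes this as a known result from Hoeffding; your argument is the standard proof of that cited result, so there is no competing route to compare.
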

\begin{definition}[\bf{Bounded differences property}]
\label{def_bounded_difference_property}
Let $n$ be an integer and $\mathcal{Z}_1,\ldots,\mathcal{Z}_n$ be some measurable spaces. A function $q: [\mathcal{Z}_1 \times \ldots \times \mathcal{Z}_n \longrightarrow \R]$ satisfies the bounded differences property if there exist positive constants $c_1,\ldots,c_n$ such that for all $i \in \{1,\ldots,n\}$ and for all $\left(z_1,\ldots,z_n\right) \in \left(\mathcal{Z}_1, \ldots, \mathcal{Z}_n\right)$ 
\begin{equation*}
    \underset{x_i' \in \mathcal{Z}_i}{\sup} \left | q\left(z_1,\ldots,z_{i-1},z_i',z_{i+1},\ldots,z_n\right) - q\left(z_1,\ldots,z_n\right) \right | \leq c_i.
\end{equation*}
\end{definition}
\begin{theorem}[\bf{McDiarmid's inequality} \cite{mcdiarmid1989method}]
    Let $n$ be an integer and $\mathcal{Z}_1,\ldots,\mathcal{Z}_n$ be some measurable spaces. Let $q : [\mathcal{Z}_1 \times \ldots \times \mathcal{Z}_n \longrightarrow \R]$ be a function satisfying the bounded differences property with bounds $c_1,\ldots,c_n$ and let $Z_1, \ldots, Z_n$ be independent random variables where $Z_i \in \mathcal{Z}_i$ for all $i \in \{1,\ldots,n\}$. \\
    Then, $\forall \varepsilon >0$ 
    \begin{equation*}
        \PP \left( q\left(Z_1,\ldots,Z_n\right) - \EE \left[q\left(Z_1,\ldots,Z_n\right) \right] > \varepsilon \right) \leq e^{- \frac{2 \varepsilon^2}{\underset{i=1}{\overset{n}{\sum}} c_i^2}}.
    \end{equation*}
    and
    \begin{equation*}
        \PP \left( q\left(Z_1,\ldots,Z_n\right) - \EE \left[q\left(Z_1,\ldots,Z_n\right) \right] < -\varepsilon \right) \leq e^{- \frac{2 \varepsilon^2}{\underset{i=1}{\overset{n}{\sum}} c_i^2}}.
    \end{equation*}
    \label{McDiarmid's theo}
\end{theorem}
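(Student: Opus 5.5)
This is McDiarmid's inequality, a standard result, and I would reproduce the classical martingale (Azuma--Hoeffding) argument. It is enough to prove the upper-tail bound. The lower-tail bound then follows by applying the same bound to $-q$, which satisfies the bounded differences property with the same constants $c_1,\ldots,c_n$.

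Write $V=q(Z_1,\ldots,Z_n)$ and let $\mathcal{F}_k=\sigma(Z_1,\ldots,Z_k)$, with $\mathcal{F}_0$ the trivial $\sigma$-field. Define the Doob martingale differences
\[
D_k=\EE[V\mid\mathcal{F}_k]-\EE[V\mid\mathcal{F}_{k-1}],
\]
so that $V-\EE[V]=\sum_{k=1}^n D_k$ and $\EE[D_k\mid\mathcal{F}_{k-1}]=0$.

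The first key step is to show that, conditionally on $\mathcal{F}_{k-1}$, $D_k$ lies in an interval of length at most $c_k$. By independence of the $Z_i$, $\EE[V\mid\mathcal{F}_k]=g_k(Z_1,\ldots,Z_k)$, where $g_k$ integrates $q$ over $(Z_{k+1},\ldots,Z_n)$ against their product law. Set
\[
L_k=\inf_{z}g_k(Z_1,\ldots,Z_{k-1},z)-\EE[V\mid\mathcal{F}_{k-1}],\qquad U_k=\sup_{z}g_k(Z_1,\ldots,Z_{k-1},z)-\EE[V\mid\mathcal{F}_{k-1}].
\]
Both are $\mathcal{F}_{k-1}$-measurable and satisfy $L_k\le D_k\le U_k$. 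Moreover,
\[
U_k-L_k=\sup_{z,z'}\EE\bigl[q(\ldots,z,\ldots)-q(\ldots,z',\ldots)\bigr]\le c_k
\]
by the bounded differences property. Independence is exactly what allows the same integrating measure to be used for both $z$ and $z'$.

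The second step applies Hoeffding's lemma conditionally: for any $s>0$,
\[
\EE[e^{sD_k}\mid\mathcal{F}_{k-1}]\le e^{s^2c_k^2/8}.
\]
This is the convexity argument underlying Theorem~\ref{Hoeffding_inequality}. Iterating the tower property from $k=n$ down to $k=1$ gives
\[
\EE\bigl[e^{s(V-\EE V)}\bigr]\le \exp\Bigl(\tfrac{s^2}{8}\sum_{i=1}^n c_i^2\Bigr).
\]

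The final step is Chernoff's bound, $\PP(V-\EE V>\varepsilon)\le \exp\bigl(-s\varepsilon+s^2\sum_i c_i^2/8\bigr)$, optimized at $s=4\varepsilon/\sum_i c_i^2$. This yields $\exp\bigl(-2\varepsilon^2/\sum_i c_i^2\bigr)$.

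I expect the main obstacle to be the first step: carefully justifying that the conditional range of $D_k$ has length at most $c_k$. This requires measurability of the sup and inf (or else working with essential suprema) and the explicit use of the product structure. The remaining steps are routine.
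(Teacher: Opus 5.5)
The paper does not prove this statement: it is quoted in the ``Existing results'' appendix from \cite{mcdiarmid1989method}. Your Doob-martingale argument is correct and is the classical proof from that source: independence bounds the conditional range of $D_k$ by $c_k$, then you apply the conditional Hoeffding lemma and a Chernoff bound with $s=4\varepsilon/\sum_i c_i^2$, and obtain the lower tail from $-q$. The measurability concern you flag can be avoided. For each fixed $(z_1,\ldots,z_{k-1})$, the map $z\mapsto g_k(z_1,\ldots,z_{k-1},z)-g_{k-1}(z_1,\ldots,z_{k-1})$ has mean zero under the law of $Z_k$ and takes values in a deterministic interval of length at most $c_k$. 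Apply Hoeffding's lemma to it and integrate via Fubini; no random suprema are needed.
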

%%%%%%%%%%%%%%
\subsubsection*{Maximum Mean Discrepancy's control} 
Here is the classical exponential bound of the maximum mean discrepancy metric defined by $\left \| \widehat{\mu}_X - \widehat{\mu}_Y \right \|_{\Hk}$ and that we used in our proof. It is valid whatever the population sizes $n_X$ and $n_Y$ (without assumption \A{1}). 
\begin{theorem}[\bf{Theorem 7. of \citep{gretton2012kernel}}]
Assume \A{2}. Then, for all $\delta>0$, 
\begin{equation*}
     \PPO \left( \left \| \widehat{\mu}_X - \widehat{\mu}_Y \right \|_{\Hk} \leq 2 \left( \sqrt{\frac{M_k}{n_X}} + \sqrt{\frac{M_k}{n_Y}} + \sqrt{\frac{M_k (n_X+n_Y) \delta}{2 n_X n_Y}} \right) \right) \geq 1 - 2 e^{-\delta}.
\end{equation*}
\label{Gretton}
\end{theorem}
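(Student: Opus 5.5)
This is the statement of Gretton et al.'s Theorem 7, so I would reprove it with McDiarmid's inequality (Theorem~\ref{McDiarmid's theo}) applied to the MMD-type functional, in the same way as Lemma~\ref{lemma_norm_Z_bar}. Define $v(x_1,\ldots,x_{n_X},y_1,\ldots,y_{n_Y}) = \| \frac{1}{n_X}\sum_i \phi(x_i) - \frac{1}{n_Y}\sum_j \phi(y_j)\|_{\Hk}$. By the triangle inequality and Lemma~\ref{kernel_bound}, replacing one $x_i$ changes $v$ by at most $2\sqrt{M_k}/n_X$, and replacing one $y_j$ changes it by at most $2\sqrt{M_k}/n_Y$. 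Hence $\sum c_i^2 = 4M_k(1/n_X + 1/n_Y) = 4M_k (n_X+n_Y)/(n_Xn_Y)$. McDiarmid then gives
\[
\PP\bigl(v - \EE v > \varepsilon\bigr) \leq \exp\Bigl(-\frac{\varepsilon^2 n_X n_Y}{2M_k(n_X+n_Y)}\Bigr).
\]
Choosing $\varepsilon = \sqrt{2M_k(n_X+n_Y)\delta/(n_Xn_Y)}$ makes the right-hand side equal to $e^{-\delta}$.

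Next I bound the expectation. Under $H_0$ we have $\mu_X=\mu_Y=\mu$, so
\[
\EE v \leq \EE\|\widehat{\mu}_X-\mu\|_{\Hk} + \EE\|\widehat{\mu}_Y-\mu\|_{\Hk}.
\]
For each term, Jensen's inequality and independence give
\[
\EE\|\widehat{\mu}_X-\mu\| \leq \sqrt{\EE\|\phi(X)-\mu\|^2/n_X} \leq \sqrt{M_k/n_X},
\]
because $\EE\|\phi(X)-\mu\|^2 = \EE k(X,X) - \|\mu\|^2 \leq M_k$. The same bound holds for the $Y$ term. (A Rademacher-symmetrization route, as in Gretton et al., would instead give a factor $2$, and that factor is what the stated constant allows for.) Combining the two steps yields
\[
v \leq \sqrt{M_k/n_X}+\sqrt{M_k/n_Y} + \sqrt{2M_k(n_X+n_Y)\delta/(n_Xn_Y)}
\]
with probability at least $1-e^{-\delta}$.

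It remains to check that this is dominated by the stated threshold. The stated right-hand side is $2\sqrt{M_k/n_X} + 2\sqrt{M_k/n_Y} + 2\sqrt{M_k(n_X+n_Y)\delta/(2n_Xn_Y)}$. Its last term equals $\sqrt{2}\sqrt{2M_k(n_X+n_Y)\delta/(n_Xn_Y)}\cdot\frac{1}{\sqrt 2}\cdot\sqrt{2}/\sqrt{2}$, that is, exactly $\sqrt{2M_k(n_X+n_Y)\delta/(n_Xn_Y)}$. So the deviation terms match, and the mean terms carry a spare factor $2$. The probability guarantee $1-e^{-\delta}$ is even stronger than the claimed $1-2e^{-\delta}$; the weaker $2e^{-\delta}$ would arise from a one-sided bound applied to each sample separately followed by a union bound, which is the alternative route if one wants to avoid a joint McDiarmid argument.

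The only genuinely delicate point is bookkeeping the constants in the bounded-difference coefficients and the expectation, so that they match the stated form. There is no conceptual obstacle, since boundedness \A{2} and $H_0$ are all that is needed.
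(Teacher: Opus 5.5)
Your proof is correct. The paper gives no argument for this statement: it imports Theorem~7 of Gretton et al., a deviation bound for $\bigl|\|\widehat{\mu}_X-\widehat{\mu}_Y\|_{\Hk}-\|\mu_X-\mu_Y\|_{\Hk}\bigr|$ valid for arbitrary $\PP_X,\PP_Y$. It specializes that bound to $H_0$, where $\mu_X=\mu_Y$, and chooses $\varepsilon$ so that the exponent equals $\delta$.

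The cited argument also rests on McDiarmid's inequality with bounded differences $2\sqrt{M_k}/n_X$ and $2\sqrt{M_k}/n_Y$, which is why your deviation term matches. It controls the expectation by Rademacher symmetrization, however, and that is the source of the factor $2$ in front of $\sqrt{M_k/n_X}+\sqrt{M_k/n_Y}$. Your second-moment (Jensen) bound uses the Hilbert structure instead. This is the same technique the paper itself uses in Lemma~\ref{lemma_norm_Z_bar} and for the function $\ell$ in the proof of Lemma~\ref{Blanchard_centered_C}.

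Your route buys two things. First, the proof is self-contained and uses only $\|\phi(z)\|_{\Hk}\le\sqrt{M_k}$, which is exactly \A{2}; the cited theorem is phrased under a uniform bound on $k$ itself. Second, the conclusion is strictly sharper: there is no factor $2$ on the mean terms, and the probability is at least $1-e^{-\delta}$.

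You could sharpen further by skipping the triangle inequality. Under $H_0$, $\widehat{\mu}_X-\mu$ and $\widehat{\mu}_Y-\mu$ are independent and centered, so $\EE v^2=(\EE k(Z,Z)-\|\mu\|_{\Hk}^2)(1/n_X+1/n_Y)$ and hence $\EE v\le\sqrt{M_k(n_X+n_Y)/(n_Xn_Y)}$.

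The generality of the cited route is not lost either. Running your argument on $\|(\widehat{\mu}_X-\widehat{\mu}_Y)-(\mu_X-\mu_Y)\|_{\Hk}$ gives the same bound without $H_0$, so symmetrization is superfluous in this Hilbert-space setting.

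Two cosmetic points. The identity you need for the deviation term is simply $2\sqrt{M_k(n_X+n_Y)\delta/(2n_Xn_Y)}=\sqrt{2M_k(n_X+n_Y)\delta/(n_Xn_Y)}$. Your speculation about where a $2e^{-\delta}$ might come from is unnecessary, since your one-sided bound already implies the statement.
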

%%%%%%%%%%%%%%%%
\subsubsection*{Operator perturbation theory} 
Perturbation theory is one of the main tool to derive our exponential bound. In particular, we were strongly inspired by \citep{koltchinskii2000random,blanchard2007statistical,zwald2005convergence}'s works developing the perturbation theory on the variance-covariance operator for kernel principal components analysis. 
In our proof, we adapted the following results to $\ST$ and $\hST$.

For the following results, we suppose that $n$ is an integer and $Z_1, \ldots, Z_n$ are $n$ independent random variables taking theirs values in a measurable space $\mathcal{Z}$ and following the distribution $\mathbb{P}$. Let $\phi$ be the feature mapping function from $\mathcal{Z}$ to a reproducing kernel Hilbert space $\Hk$ associated to the kernel function $k$ such that $\phi(z) = k(z,\cdot)$ for all $z \in \mathcal{Z}$. 
%\begin{theorem}[\bf{Lemma 1. of \citep{zwald2005convergence} and Corollary 5. of \citep{shawe2003estimating}}]
\begin{theorem}
	\bf{(Lemma 1. of \cite{zwald2005convergence} and Corollary 5. of \cite{shawe2003estimating}).\rm} 
	\normalfont
	Let us define the non-centered covariance operator $C \in \HSk$ and the non-centered empirical covariance operator $C_n \in \HSk$ respectively by 
\begin{align*}
    C &= \EE[\phi(Z_1) \otimes_{\Hk} \phi(Z_1)], \\
    C_n &= \frac{1}{n} \underset{i=1}{\overset{n}{\sum}} \phi(Z_i) \otimes_{\Hk} \phi(Z_i).
\end{align*}
Assume \A{2} and the simplicity of the eigenvalues of $C$. 
Then, for all $\delta>0$ 
\begin{equation*}
     \PPO \left( \left \| C_n - C \right \|_{\HSk} \leq \frac{2 M_k}{\sqrt{n}} \left( 1 + \sqrt{\frac{\delta}{2}}\right) \right) \geq 1 - e^{-\delta}.
\end{equation*}
\label{Blanchard_non_centered_C}
\end{theorem}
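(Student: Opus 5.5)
The plan is to apply McDiarmid's inequality (Theorem~\ref{McDiarmid's theo}) to the real-valued function
$$\ell(z_1,\ldots,z_n)=\Big\|\frac1n\sum_{i=1}^n \phi(z_i)\otimes_{\Hk}\phi(z_i)-C\Big\|_{\HSk},$$
and to bound its expectation separately. This mirrors the structure of the proof of Lemma~\ref{Blanchard_centered_C}, with the feature vectors replaced by the rank-one operators $\phi(Z_i)\otimes_{\Hk}\phi(Z_i)$, which live in the separable Hilbert space $\HSk$. The simplicity of the eigenvalues of $C$ plays no role here; only \A{2} is used.

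\emph{Bounded differences.} Replacing $z_i$ by a copy $z_i'$ changes the empirical operator by $\frac1n\big(\phi(z_i')\otimes_{\Hk}\phi(z_i')-\phi(z_i)\otimes_{\Hk}\phi(z_i)\big)$. By the reverse triangle inequality, (\ref{norm_inequality}) and Lemma~\ref{kernel_bound}, each rank-one term has HS norm $\|\phi(z)\|_{\Hk}^2=k(z,z)\le M_k$. Hence $\ell$ has bounded differences with $c_i=2M_k/n$, and $\sum_i c_i^2=4M_k^2/n$. McDiarmid then gives
$$\PP\big(\ell-\EE\ell>\varepsilon\big)\le \exp\!\Big(-\frac{n\varepsilon^2}{2M_k^2}\Big).$$
Choosing $\varepsilon=M_k\sqrt{2\delta/n}=\frac{2M_k}{\sqrt n}\sqrt{\delta/2}$ makes the right-hand side equal to $e^{-\delta}$.

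\emph{Expectation.} By Jensen's inequality, $\EE\ell\le\sqrt{\EE\ell^2}$. The variables $\xi_i=\phi(Z_i)\otimes_{\Hk}\phi(Z_i)-C$ are i.i.d., centered (since $C=\EE[\phi(Z)\otimes_{\Hk}\phi(Z)]$) and take values in the Hilbert space $\HSk$. Their cross terms $\EE\langle\xi_i,\xi_j\rangle_{\HSk}$ therefore vanish for $i\ne j$, and
$$\EE\ell^2=\frac1n\EE\|\xi_1\|_{\HSk}^2\le\frac1n\EE\|\phi(Z)\otimes_{\Hk}\phi(Z)\|_{\HSk}^2=\frac1n\EE\,k(Z,Z)^2\le\frac{M_k^2}{n}.$$
So $\EE\ell\le M_k/\sqrt n\le 2M_k/\sqrt n$. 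Combining this with the deviation bound yields $\ell\le\frac{2M_k}{\sqrt n}\big(1+\sqrt{\delta/2}\big)$ with probability at least $1-e^{-\delta}$.

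The only step needing some care is the expectation bound. It requires justifying the Hilbert-space variance identity, which rests on Bochner integrability in $\HSk$; this holds because $\|\xi_i\|_{\HSk}\le 2M_k$ is bounded. Everything else is a direct constant check.
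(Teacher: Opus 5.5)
Your proof is correct: the bounded-differences constant $c_i=2M_k/n$, the choice $\varepsilon=M_k\sqrt{2\delta/n}$, and the Jensen/variance bound $\EE\|C_n-C\|_{\HSk}\le M_k/\sqrt{n}$ all check out. That expectation bound even leaves a factor $2$ of slack, and you are right that simplicity of the eigenvalues is never used. The paper gives no proof of its own here (it imports the result from \cite{zwald2005convergence}), but your McDiarmid-plus-Jensen argument is essentially the standard proof for this bound, and it is exactly the template the paper follows for the centered analogue in Lemma~\ref{Blanchard_centered_C}.
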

\begin{remark}[\bf{Comment about the Centered Case in \cite{zwald2005convergence}}]
    Corollary 5. of \citep{shawe2003estimating} can be generalized to the centered covariance operator and a similar result holds up to some additional constant factors. In our proof, we need to specify these constants (see Lemma \ref{Blanchard_centered_C}). 
\end{remark}
%\begin{theorem}[\bf{Theorem 2. of \citep{zwald2005convergence} and Lemma 5.2. of \citep{koltchinskii2000random}}]
\begin{theorem}
	\bf{(Theorem 2. of \cite{zwald2005convergence} and Lemma 5.2. of \cite{koltchinskii2000random}). \rm} 
	\normalfont
Let $A \in \HSk$ be a symmetric positive operator with simple positive eigenvalues $\alpha_1 > \alpha_2 > \ldots$. For an integer $r$ such that $\alpha_r >0$, let $\delta_r = \frac{1}{2}\min(\alpha_r - \alpha_{r-1}, \alpha_{r-1} - \alpha_{r-2})$.
Let $B \in \HSk$ be another symmetric operator such that $\|B\|_{\HSk} < \frac{\delta_r}{2}$ and $(A+B)$ is still a positive operator with simple nonzero eigenvalues. 
Then, the orthogonal projector $\Pi_{g_r}$ onto the one-dimensional subspace of $\Hk$ spanned by the $r^{th}$ eigenfunction $g_r$ of $A$ satisfies 
$$ \|\Pi_{g_r}(A) - \Pi_{g_r}(A+B) \|_{\HSk} \leq \frac{2 \|B\|_{\HSk}}{\delta_r}.$$
\label{Zwald_Blanchard_Theo}
\end{theorem}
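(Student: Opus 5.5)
The approach is the standard Riesz-projection (contour integral) argument of \cite{koltchinskii2000random}. I read $\delta_r$ as half the minimal distance from $\alpha_r$ to its neighbouring eigenvalues $\alpha_{r-1},\alpha_{r+1}$, in the spirit of the definition of $\Delta_t$ in Section~\ref{sec:theory}. Since $0$ may lie in the spectrum of $A$, I also assume $\alpha_r\geq 2\delta_r$, as is built into $\Delta_T$. Let $\Gamma$ be the circle in $\mathbb{C}$ centred at $\alpha_r$ with radius $\delta_r$. Write $R_A(z)=(A-zI)^{-1}$ for the resolvent, and similarly $R_{A+B}(z)$.

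The first step is to locate the spectra relative to $\Gamma$. The spectrum of $A$ consists of the simple eigenvalues $\alpha_j$, possibly together with $0$. For $z\in\Gamma$ we have $|z-\alpha_r|=\delta_r$, and every other point of the spectrum is at distance at least $2\delta_r$ from $\alpha_r$, hence at least $\delta_r$ from $z$. Since $A$ is self-adjoint, this gives $\|R_A(z)\|_{op}\le 1/\delta_r$ on $\Gamma$.

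By Weyl's inequality (or Proposition~\ref{Hoffmann_inequality}), each eigenvalue of $A+B$ lies within $\|B\|_{op}\le\|B\|_{\HSk}<\delta_r/2$ of the corresponding eigenvalue of $A$. Hence exactly one eigenvalue of $A+B$, namely the $r$-th, which is simple by assumption, lies inside $\Gamma$. All points of the spectrum of $A+B$ are at distance more than $\delta_r/2$ from $\Gamma$, so $\|R_{A+B}(z)\|_{op}\le 2/\delta_r$ on $\Gamma$.

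Next I use the holomorphic functional calculus for these compact self-adjoint operators. It gives
$$\Pi_{g_r}(A)=-\frac{1}{2\pi i}\oint_\Gamma R_A(z)\,dz,$$
and the same formula for $A+B$, where $\Pi_{g_r}(A+B)$ is the one-dimensional spectral projector of the unique enclosed eigenvalue. The second resolvent identity $R_{A+B}(z)-R_A(z)=-R_{A+B}(z)\,B\,R_A(z)$ then yields
$$\Pi_{g_r}(A+B)-\Pi_{g_r}(A)=\frac{1}{2\pi i}\oint_\Gamma R_{A+B}(z)\,B\,R_A(z)\,dz .$$

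To conclude, I bound the Hilbert--Schmidt norm of the integrand using the ideal property $\|XBY\|_{\HSk}\le\|X\|_{op}\|B\|_{\HSk}\|Y\|_{op}$. The integrand is therefore bounded by $(2/\delta_r)(1/\delta_r)\|B\|_{\HSk}$. Multiplying by the length $2\pi\delta_r$ of $\Gamma$ and dividing by $2\pi$ gives exactly $2\|B\|_{\HSk}/\delta_r$.

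The main obstacle is the spectral-separation step. It has to be checked carefully that the contour encloses exactly the $r$-th eigenvalue of both operators, including in the presence of the accumulation point $0$, and that it stays a uniform distance from both spectra. This is precisely where the hypotheses $\|B\|_{\HSk}<\delta_r/2$, simplicity of the eigenvalues, and positivity of $\alpha_r$ are used. The remaining steps are routine once the resolvent bounds are in place.
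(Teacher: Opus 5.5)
Your proof is correct and is the same Riesz-projection argument (contour around $\alpha_r$ plus the second resolvent identity) that underlies the cited results of Koltchinskii--Gin\'e and Zwald--Blanchard; the paper gives no proof of its own and only imports the statement. Your reading of $\delta_r$ as half the gap to $\alpha_{r-1}$ and $\alpha_{r+1}$ is the right one, since the printed formula is negative, and your extra hypothesis $\alpha_r\geq 2\delta_r$ holds automatically: positivity of $A$ gives $\alpha_{r+1}\geq 0$ (read $\alpha_{r+1}=0$ if $A$ has rank $r$), hence $2\delta_r\leq\alpha_r-\alpha_{r+1}\leq\alpha_r$.
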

\begin{proposition}
	\bf{Remark about the Approximation Error of the Eigenvectors of \cite{zwald2005convergence} \rm}
	\normalfont
Under assumptions of Theorem \ref{Zwald_Blanchard_Theo}, if $g_r$ and $h_r$ denotes the $r-$th eigenvectors of $A$ and $(A+B)$ respectively and if $\langle g_r, h_r \rangle_{\Hk} >0$, then 
$$ \left \| g_r - h_r \right \|_{\Hk} \leq \left \| \Pi_{g_r} - \Pi_{h_r} \right \|_{\HSk}. $$
\label{Zwald_Blanchard_Prop}
\end{proposition}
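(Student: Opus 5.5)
The inequality is purely geometric: it only uses that $g_r$ and $h_r$ are unit vectors with $\langle g_r, h_r\rangle_{\Hk} > 0$. The perturbation hypotheses of Theorem~\ref{Zwald_Blanchard_Theo} play no role beyond guaranteeing that these eigenvectors are well defined, each up to sign. The plan is to compute both sides explicitly in terms of $c := \langle g_r, h_r\rangle_{\Hk} \in (0,1]$, the bound $c\le 1$ being Cauchy--Schwarz.

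First, expand the left-hand side. Since $\|g_r\|_{\Hk} = \|h_r\|_{\Hk} = 1$,
\begin{equation*}
\|g_r - h_r\|_{\Hk}^2 = 2 - 2c .
\end{equation*}

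Second, expand the right-hand side in $\HSk$:
\begin{equation*}
\|\Pi_{g_r} - \Pi_{h_r}\|_{\HSk}^2 = \|\Pi_{g_r}\|_{\HSk}^2 + \|\Pi_{h_r}\|_{\HSk}^2 - 2\langle \Pi_{g_r}, \Pi_{h_r}\rangle_{\HSk}.
\end{equation*}
We have $\Pi_{g_r} = g_r \otimes_{\Hk} g_r$, so by (\ref{norm_inequality}) $\|\Pi_{g_r}\|_{\HSk} = \|g_r\|_{\Hk}^2 = 1$, and likewise $\|\Pi_{h_r}\|_{\HSk} = 1$.

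For the cross term, evaluate the HS inner product in an orthonormal basis $(e_i)_{i\ge1}$ with $e_1 = g_r$. Then
\begin{equation*}
\langle \Pi_{g_r}, \Pi_{h_r}\rangle_{\HSk} = \sum_i \langle \Pi_{g_r} e_i, \Pi_{h_r} e_i\rangle_{\Hk} = \langle g_r, \Pi_{h_r} g_r\rangle_{\Hk} = \langle g_r, h_r\rangle_{\Hk}^2 = c^2,
\end{equation*}
because $\Pi_{g_r} e_i = 0$ for every $i \ge 2$. This computation is the only step needing any care, and it is routine. It follows that
\begin{equation*}
\|\Pi_{g_r} - \Pi_{h_r}\|_{\HSk}^2 = 2 - 2c^2 .
\end{equation*}

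Finally, compare the two expressions. Since $0 < c \le 1$, we have $c^2 \le c$, hence $2 - 2c \le 2 - 2c^2$. Taking square roots gives $\|g_r - h_r\|_{\Hk} \le \|\Pi_{g_r} - \Pi_{h_r}\|_{\HSk}$.

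The sign condition $\langle g_r, h_r\rangle_{\Hk} > 0$ is exactly what is needed here. If $c$ were negative, then $2 - 2c > 2 \ge 2 - 2c^2$ and the inequality would fail. This is why the paper applies the result with $s f_t$, where $s = \sign(\langle f_t, \widehat f_t\rangle)$, in Lemma~\ref{negligeble_terms} and Corollary~\ref{P_V_theo}.
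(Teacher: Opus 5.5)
Your proof is correct. The identities $\|g_r-h_r\|_{\Hk}^2 = 2-2c$ and $\|\Pi_{g_r}-\Pi_{h_r}\|_{\HSk}^2 = 2-2c^2$ (your cross-term computation $\langle \Pi_{g_r},\Pi_{h_r}\rangle_{\HSk}=c^2$ is right), together with $c^2\le c$ for $c\in(0,1]$, give the claim; only the unit norms and the sign condition are used. The paper gives no proof of its own and simply quotes this remark from Zwald and Blanchard, and your computation is the standard argument behind that remark.
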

\begin{proposition}
	\bf{The Hoffmann-Wielandt inequality \cite{bhatia1994hoffman} in infinite dimensional space.\rm}
	\normalfont
	(see the proof of Theorem 3. of \cite{zwald2005convergence}). 
\label{Hoffmann_inequality}
Under assumptions of Theorem \ref{Zwald_Blanchard_Theo}, if  $\beta_1 > \beta_2 > \ldots$ denotes the eigenvalues of $(A+B)$, then for each $i >0$ 
\begin{equation*}
	|\alpha_i-\beta_i| \leq  \|B\|_{\HSk}.
\end{equation*}
\end{proposition}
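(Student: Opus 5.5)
We prove the bound through Weyl's perturbation inequality: each eigenvalue moves by at most $\|B\|$ in operator norm, and the operator norm is dominated by the Hilbert--Schmidt norm. We never need the full Hoffmann--Wielandt sum inequality $\sum_i(\alpha_i-\beta_i)^2\le\|B\|^2_{\HSk}$, which is stronger. The argument has three steps.

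\emph{Step 1: min-max characterization.} Under the assumptions of Theorem~\ref{Zwald_Blanchard_Theo}, $A$ and $A+B$ are Hilbert--Schmidt, hence compact, self-adjoint and positive. By the spectral theorem, their positive eigenvalues, listed in non-increasing order ($\alpha_1>\alpha_2>\ldots$ and $\beta_1>\beta_2>\ldots$), satisfy the Courant--Fischer characterization
\[
\alpha_i=\max_{V\subset\Hk,\ \dim V=i}\ \min_{v\in V,\ \|v\|_{\Hk}=1}\langle Av,v\rangle_{\Hk},
\]
and similarly for $\beta_i$ with $A+B$. For a positive compact operator this is the standard variational principle. If there are only finitely many positive eigenvalues, we extend each list by zeros; positivity guarantees that the max-min then equals $0$, so the formula still holds.

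\emph{Step 2: Weyl's inequality.} For any unit vector $v$ we have
\[
\left|\langle (A+B)v,v\rangle_{\Hk}-\langle Av,v\rangle_{\Hk}\right|=\left|\langle Bv,v\rangle_{\Hk}\right|\le\|B\|_{op}.
\]
Fix an $i$-dimensional subspace $V$. Taking the minimum over unit $v\in V$ gives
\[
\min_{v}\langle (A+B)v,v\rangle_{\Hk}\ \ge\ \min_{v}\langle Av,v\rangle_{\Hk}-\|B\|_{op}.
\]
Taking the maximum over $V$ then yields $\beta_i\ge\alpha_i-\|B\|_{op}$. Exchanging the roles of $A$ and $A+B$, that is, writing $A=(A+B)+(-B)$, gives $\alpha_i\ge\beta_i-\|B\|_{op}$. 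Together, $|\alpha_i-\beta_i|\le\|B\|_{op}$.

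\emph{Step 3: comparison of norms.} Let $v$ be a unit vector and complete it into an orthonormal basis $(v,e_2,e_3,\ldots)$ of $\Hk$. By the definition of the Hilbert--Schmidt norm recalled in Appendix~\ref{Appendice_Operators}, which is independent of the chosen basis,
\[
\|Bv\|_{\Hk}^2\le\|Bv\|_{\Hk}^2+\sum_{j\ge2}\|Be_j\|_{\Hk}^2=\|B\|_{\HSk}^2.
\]
Hence $\|B\|_{op}\le\|B\|_{\HSk}$, and the claim follows.

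The main obstacle is Step 1. Since $\Hk$ is infinite-dimensional, we must check that the max-min is attained and equals the $i$-th eigenvalue counted with multiplicity. This uses compactness: the maximizing subspace is the span of the first $i$ eigenvectors, and the minimizer on it is the $i$-th eigenvector. We must also treat the kernel carefully when only finitely many eigenvalues are positive. The simplicity assumption in Theorem~\ref{Zwald_Blanchard_Theo} is not needed for this inequality; it only fixes the labelling of the eigenvalues.
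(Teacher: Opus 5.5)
Your proof is correct, but it takes a different route from the paper. The paper gives no argument of its own. It lists this proposition among existing results and refers to the infinite-dimensional Hoffman--Wielandt inequality \cite{bhatia1994hoffman}, as used in the proof of Theorem~3 of \cite{zwald2005convergence}. That inequality states $\sum_i(\alpha_i-\beta_i)^2\le\|B\|_{\HSk}^2$ for the decreasing enumerations of the two spectra, and each single term is then bounded by the whole sum.

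You instead derive Weyl's inequality from the Courant--Fischer characterization and then use $\|B\|_{\mathrm{op}}\le\|B\|_{\HSk}$. This is elementary and self-contained, and it gives the sharper per-eigenvalue bound $|\alpha_i-\beta_i|\le\|B\|_{\mathrm{op}}$. The Hoffman--Wielandt route gives simultaneous $\ell^2$ control of all deviations, which is not needed here. In infinite dimensions it also relies on a nontrivial theorem whose enumeration conventions require care. The two inequalities are incomparable rather than one being stronger, as your opening sentence suggests, and for this statement yours is the more economical tool.

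Your proof also uses only that $A$ and $A+B$ are compact, self-adjoint and positive. It needs neither simplicity of the spectra nor smallness of $\|B\|_{\HSk}$ relative to the gap. This matters because, in the proof of Corollary~\ref{P_V_theo}, the paper applies this proposition with $A+B=\hS$ precisely in order to deduce that the eigenvalues of $\hS$ are simple. Read literally under the assumptions of Theorem~\ref{Zwald_Blanchard_Theo}, that step would be circular. Your argument shows the simplicity hypothesis is superfluous, which removes the circularity.
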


\end{document}